\newcommand{\bbN}{{\mathbb N}}
\newcommand{\bbQ}{{\mathbb Q}}
\newcommand{\bbR}{{\mathbb R}}
\newcommand{\bbC}{{\mathbb C}}
\newcommand{\supp}{\operatorname{supp}}
\newcommand{\Ker}{\operatorname{Ker}}
\newcommand{\Stab}{\operatorname{Stab}}
\newcommand{\Fix}{\operatorname{Fix}}
\newcommand{\Aut}{\operatorname{Aut}}
\newcommand{\GL}{\operatorname{GL}}
\newcommand{\ec}{/\!\!/}
\newtheorem{theorem}{Theorem}[section]
\newtheorem{lemma}[theorem]{Lemma}
\newtheorem{cor}[theorem]{Corollary}
\newtheorem{proposition}[theorem]{Proposition}
\newtheorem{prop}[theorem]{Proposition}
\theoremstyle{definition}
\newtheorem{defn}[theorem]{Definition}
\newtheorem{example}[theorem]{Example}
\newtheorem{remark}[theorem]{Remark}
\numberwithin{equation}{section}
\begin{document}

\title{Algebraic Representations of Ergodic Actions and Super-Rigidity}

\author{Uri Bader}

\author{Alex Furman}



\maketitle

\begin{abstract}
We revisit Margulis-Zimmer Super-Rigidity and provide some generalizations.
In particular we obtain super-rigidity results for lattices in higher-rank groups or product of groups, targeting at algebraic groups over arbitrary fields with absolute values.
We also obtain cocycle super-rigidity results for a wide class of groups with respect to mixing actions.
Our approach is based on a systematic study of algebraic representations of ergodic actions.
\end{abstract}

\section{Introduction}

In this paper we study systematically the phenomenon of super-rigidity discovered by Margulis in the late 1970's
and later extended by Zimmer.
The two monographs \cite{margulis-book} and \cite{zimmer-book},
and in particular the celebrated Margulis Super-Rigidity Theorem \cite[Theorem~VII.5.6]{margulis-book}
and Zimmer Cocycle Super-Rigidity Theorem \cite[Theorem 5.2.5]{zimmer-book},
had (and still have) a tremendous impact on various mathematical subjects
and on a large community of researchers.
Specifically, the authors of these notes are greatly influenced by Margulis and Zimmer and their mathematical methods and perspectives.

Our method enables us to prove the following extension of the above mentioned Margulis Super-Rigidity Theorem.
An important ingredient in our proof is a result developed together with Jean L\'{e}cureux and Bruno Duchesne
which will appear soon in \cite{BDL}.

\begin{theorem}[Margulis super-rigidity for arbitrary fields] \label{marguliscor}
Let $l$ be a local field.
Let $T$ to be the $l$-points of a connected almost-simple algebraic group defined over $l$.
Assume that the $l$-rank of $T$ is at least two.
Let $\Gamma<T$ be a lattice.

Let $k$ be a field with an absolute value.
Assume that as a metric space $k$ is complete.
Let $G$ be the $k$-points of an adjoint form simple algebraic group defined over $k$.
Let $\delta:\Gamma \to G$ be a homomorphism.
Assume $\delta(\Gamma)$ is Zariski dense in $G$ and unbounded.
Then there exists a continuous homomorphism $d:T\to G$
such that $\delta=d|_{\Gamma}$.
\end{theorem}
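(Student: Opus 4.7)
The plan is to follow the framework of algebraic representations of ergodic actions. Fix a minimal $l$-parabolic $P<T$ and set $B=T/P$, the Furstenberg boundary of $T$. The restricted $\Gamma$-action on $B$ is amenable (after Zimmer), and the diagonal $\Gamma$-action on $B\times B$ is ergodic (Moore ergodicity for lattices in higher-rank semisimple groups over local fields); in other words, $B$ serves as a $\Gamma$-boundary in every sense needed below.

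First I would construct an \emph{algebraic gate} for this boundary: a $\Gamma$-equivariant measurable map
\[
\phi\colon B\longrightarrow (G/H)(k),
\]
where $H<G$ is a $k$-algebraic subgroup chosen minimal among those admitting such a map. Existence of $\phi$ with $H$ a proper subgroup should follow by combining amenability of $\Gamma\acts B$ with the completeness of $(k,|\cdot|)$, which guarantees a well-behaved continuous $G(k)$-action on appropriate compactifications of $G/H$ and permits a Zimmer-style cocycle reduction. A minimum exists by the Noetherian property of the lattice of $k$-algebraic subgroups of $G$, and the hypothesis that $\delta(\Gamma)$ is unbounded in $G(k)$ precludes the trivial reduction $H=G$.

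Next I would combine double ergodicity with higher rank to collapse $H$. Pushing $\phi\times\phi$ forward to the orbit space $H\bs G/H$, the $\Gamma$-ergodicity of $B\times B$ forces the resulting map to be essentially constant; this produces a generic Zariski-closed relation between $\phi(b_{1})$ and $\phi(b_{2})$ which, together with the minimality of $H$, constrains the $\delta(\Gamma)$-normalizer of $H$. Zariski density of $\delta(\Gamma)$ in the adjoint simple group $G$ then forces $H$ to be central, hence trivial. The hypothesis $\operatorname{rk}_{l}(T)\ge 2$ enters through the existence of two non-conjugate maximal parabolics $Q_{1},Q_{2}\supset P$: the intermediate boundaries $T/Q_{i}$ receive induced algebraic representations whose coherence supplies the structural control required. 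The joint work \cite{BDL} with Duchesne and L\'ecureux provides the boundary-theoretic machinery that allows this step to be run for an arbitrary complete valued field $k$, not only in the classical archimedean setting.

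With $H$ trivial, $\phi\colon B\to G(k)$ is a $\Gamma$-equivariant measurable map, and I would conclude by a standard induction. Viewing $\delta$ as a measurable cocycle $T\times T/\Gamma\to G$, the map $\phi$ provides a measurable transfer function trivializing its restriction over the boundary, and ergodicity of $T\acts T/\Gamma$ (Moore) combined with the $T$-action on $B$ upgrades this to a trivialization by a measurable homomorphism $d\colon T\to G$ with $d|_{\Gamma}=\delta$; a standard measurable-to-continuous argument then gives continuity. The principal obstacle is the third step---collapsing $H$ to the trivial subgroup over an arbitrary complete valued field---where Margulis--Zimmer arguments specific to the archimedean case no longer suffice and the input from \cite{BDL} becomes indispensable.
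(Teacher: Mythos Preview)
Your outline follows a boundary-theory route --- work directly with the $\Gamma$-action on $B=T/P$ --- whereas the paper converts $\delta$ into the cocycle $c=\delta\kappa$ on $X=T/\Gamma$, uses Howe--Moore to get mixing, and runs the bi-algebraic representation machinery of \S\ref{algebraic bi-actions}--\S\ref{reals} along a chain $T_0=A_0,\ T_1=Z_T(A_0),\ T_2=A_0,\ T_3=A_1,\ldots$ of successively commuting subgroups built from one-dimensional $l$-split subtori; the rank hypothesis enters through Margulis' factorisation $T=Z_T(A_0)\cdots Z_T(A_n)$, not through intermediate parabolics $Q_i\supset P$.

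Your sketch has two genuine gaps. First, the step from ``the map $B\times B\to H\backslash G/H$ is essentially constant'' to ``$H$ is trivial'' is not short: constancy only says the essential image of $\phi\times\phi$ lies in a single $G$-orbit of $({\bf G}/{\bf H})^2$, and minimality of ${\bf H}$ together with Zariski density of $\delta(\Gamma)$ do not by themselves force ${\bf H}$ central --- ``structural control'' coming from the $T/Q_i$ is an assertion, not a mechanism. In the paper the collapse ${\bf H}=\{e\}$ is achieved quite differently (Theorem~\ref{main alg}): the initial-object structure is propagated from $T_0$ to each $T_i$ via Proposition~\ref{T'} (using that $T_{i-1}$ and $T_i$ commute), the preimage in $N_{\bf G}({\bf H})$ of the group generated by the ${\bf L}_i$ is shown to be all of ${\bf G}$ by $c$-ergodicity of $Y\ec T$, and simplicity then gives ${\bf H}=\{e\}$. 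Second, your final paragraph is the heart of the theorem, not an afterthought: a $\Gamma$-equivariant $\phi:B\to G$ satisfies $\phi(\gamma b)=\delta(\gamma)\phi(b)$, which says nothing about $T$, and ``a standard induction'' does not bridge this. The paper devotes the last part of the proof of Theorem~\ref{main alg} to extracting $d$ from the right-translation $G$-action on the affine ring $k[{\bf G}]$, after $\phi$ has been shown to intertwine each $T_i$-action with a right translation by some $d_i(t)$. Finally, note that \cite{BDL} is used to \emph{start} the argument (produce a nontrivial algebraic representation and rule out the bounded alternative, as in Theorem~\ref{nontrivial}), not to collapse ${\bf H}$.
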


The conclusion of Theorem~\ref{marguliscor} holds also for irreducible lattices in semi-simple groups and $S$-arithmetic groups.
More generally, it holds for any irreducible lattice in a product of groups.
Recall that a lattice in a product of groups is called irreducible if it has a dense image in each factor.

\begin{theorem}[Super-Rigidity for lattices in products] \label{thm:lattice}
Let $T=T_1\times T_2\times \cdots\times T_n$ be a product of lcsc groups,
and let $\Gamma<T$ be an irreducible lattice.

Let $k$ be a field with an absolute value.
Assume that as a metric space $k$ is complete.
Let $G$ be the $k$-points of an adjoint form simple algebraic group defined over $k$.
Let $\delta:\Gamma \to G$ be a homomorphism.
Assume $\delta(\Gamma)$ is Zariski dense in $G$ and unbounded.
Then there exists a continuous homomorphism $d:T\to G$
such that $\delta=d|_{\Gamma}$.
Such a homomorphism must factor through the projection $T\to T_i$ for some $i \leq n$.
\end{theorem}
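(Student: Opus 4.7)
The blueprint is that of Theorem~\ref{marguliscor}, with irreducibility of the lattice replacing the higher-rank hypothesis. For each factor $T_i$ choose a $(T_i,\mu_i)$-boundary $B_i$; then $B := B_1 \times \cdots \times B_n$ with the diagonal $T$-action and the product measure is a $(T,\mu)$-boundary, hence also a $\Gamma$-boundary since $\Gamma < T$ is a lattice. The density of each projection $\pi_i(\Gamma) < T_i$ guaranteed by irreducibility ensures that each $B_i$ is itself a nontrivial $\Gamma$-space.

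\textbf{Gate and cocycle trivialization.}
Apply the theory of algebraic representations of ergodic actions developed in the body of the paper to the $\Gamma$-space $B$, with coefficient group $G$ and cocycle induced by $\delta$. This produces a minimal algebraic subgroup $H \leq G$ and a $\Gamma$-equivariant measurable map $\phi : B \to G/H$. Doubling the boundary and exploiting Zariski density together with unboundedness of $\delta(\Gamma)$, one further obtains a $\Gamma$-equivariant measurable map $\psi : B \times \check B \to G$ that trivializes the cocycle, where $\check B$ denotes the dual (reverse-time) boundary. Zariski density and unboundedness will be used exactly as in Theorem~\ref{marguliscor} to cut the gate subgroup down.

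\textbf{Factoring through one $T_i$, and descent.}
The main obstacle is to show that $\psi$ depends only on one factor pair $(B_i, \check B_i)$. If $\psi$ depended nontrivially on two distinct indices $i \ne j$, then fixing generic coordinates in the remaining directions would produce two maps into $G$ whose interaction under the mutually commuting $T_i$- and $T_j$-actions and under $\delta(\Gamma)$-translations would yield an algebraic identity forcing $\delta(\Gamma)$ into a proper algebraic subgroup, contradicting Zariski density; the argument mirrors the higher-rank analysis of Theorem~\ref{marguliscor} with the commuting factors replacing commuting root subgroups. Hence $\psi$ descends to a measurable map $B_i \times \check B_i \to G$. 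Standard Mackey-type descent, using that $\pi_i(\Gamma)$ is dense in $T_i$ and that $B_i$ is a $T_i$-boundary, then upgrades $\delta$ to a continuous homomorphism $d_i : T_i \to G$ with $\delta = d_i \circ \pi_i|_\Gamma$. Setting $d := d_i \circ \pi_i$ produces the required continuous extension, and by construction $d$ factors through the single projection $\pi_i$.
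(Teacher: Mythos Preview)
Your outline diverges substantially from the paper's route and, as written, has a genuine gap at the factoring step.

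The paper never works with $\Gamma$ acting on a boundary. It passes immediately to the induced cocycle: after reducing to a separable subfield $k'\subset k$, one sets $X=T/\Gamma$, forms the standard cocycle $\kappa:T\times X\to\Gamma$ of Example~\ref{stand}, and considers $c=\delta\circ\kappa:T\times X\to G$. Proposition~\ref{Gamma-c-ergodic} shows $c$ is not cohomologous into a proper algebraic or bounded subgroup, so Theorem~\ref{prod-cocycle} applies and produces a continuous $d:T_i\to G$ with $c$ cohomologous to $d\circ\pi_i$; Proposition~\ref{lattice-vs-cocycle} then converts this into $\delta=d^g|_\Gamma$. All the content sits inside Theorem~\ref{prod-cocycle}, whose proof is an induction on the factors: using Proposition~\ref{prop:prod} one shows the $\hat S_{j+1}$-action on $X\times\prod_{i\le j}B_i$ is $c_j$-ergodic for every $j$, and then Theorem~\ref{nontrivial} gives a contradiction. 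The engine is that the \emph{commuting} subgroups $\hat S_i$ and $S_i$ of $T$ act separately on the same space, which is exactly the hypothesis of Proposition~\ref{prop:prod}.

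Your sketch stays at the $\Gamma$-level, and this is where it breaks. The lattice $\Gamma$ has no product decomposition, so you have no pair of commuting actions to feed into anything like Proposition~\ref{prop:prod}; the dense projections $\pi_i(\Gamma)$ do not by themselves give you commuting $\Gamma$-actions on $B$. Your factoring step (``fixing generic coordinates \ldots would yield an algebraic identity forcing $\delta(\Gamma)$ into a proper algebraic subgroup'') is the entire crux and is asserted without mechanism; in the paper this is precisely the inductive scheme of Theorem~\ref{prod-cocycle}, and it requires the $T$-action on $X$, not the $\Gamma$-action on $B$. Your appeals to ``exactly as in Theorem~\ref{marguliscor}'' do not help either: the paper proves Theorem~\ref{marguliscor} via the bi-algebraic machinery of \S\ref{algebraic bi-actions}--\S\ref{reals}, which is a different argument altogether and does not involve product boundaries. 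Finally, the passage from $\phi:B\to G/H$ to a map $\psi:B\times\check B\to G$ (i.e.\ showing $H=\{e\}$) is not justified; in the paper this reduction is again exactly Proposition~\ref{prop:prod}, and the space $\check B$ plays no role anywhere in the paper's framework.
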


Theorem~\ref{thm:lattice} extends previous theorems by Monod \cite{Monod-products} and Gelander,
Karlsson and Margulis \cite{GKM} in which similar results are proven for uniform lattices (or under certain integrability conditions).
We remark that both \cite{Monod-products} and \cite{GKM} consider more general settings than ours.

For both Theorem~\ref{marguliscor} and Theorem~\ref{thm:lattice} we prove
analogous in the realm of cocycle super-rigidity a la Zimmer, see Theorem\ref{zimmercor} and Theorem~\ref{prod-cocycle}.
In this setting one considers an ergodic action of the group $T$ on a Lebesgue space $X$
and a cocycle $c:T\times X\to G$,
that is a measurable map satisfying the a.e.\ identity $c(tt',x)=c(t,t'x)c(t',x)$.
The notion "cocycle super-rigidity theorem" refers to a theorem stating that under certain conditions
any such a cocycle is cohomologous to a homomorphism,
that is there exists a homomorphism $d:T\to G$ and a measurable map $\phi:X\to G$
satisfying the a.e.\ identity $c(t,x)\phi(x)=\phi(tx)d(t)$.

\begin{theorem}[Generalized cocycle super-rigidity] \label{prezimmer}
Let $T$ be a locally compact second countable group.
Assume $T$ is generated as a topological group by the closed, non-compact subgroups $T_0,T_1,T_2,\ldots$
(for any finite number or countably many $T_i$'s).
Assume $T_0$ is amenable and for each $i=1,2,\ldots$ the groups $T_{i-1}$ and $T_i$ commute.
Let $X$ be a $T$-Lebesgue space
with a finite invariant mixing measure.

Let $k$ be a local field.
Let $G$ be the $k$-points of an adjoint form simple algebraic group defined over $k$.
Let $c:T\times X \to G$ be a measurable cocycle.
Assume that $c$ is not cohomologous to a cocycle taking values in a proper algebraic subgroup or a bounded subgroup of $G$.
Then there exists a continuous homomorphism $d:T\to G$
such that $c$ is cohomologous to $d$.
\end{theorem}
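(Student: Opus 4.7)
The plan is to combine the algebraic-representation framework developed in the body of the paper with a commutator-propagation argument driven by mixing, in the spirit of Zimmer's cocycle super-rigidity but adapted to the chain of commuting generating subgroups. Since the $T$-action on $X$ is mixing and each $T_i$ is closed and non-compact, each $T_i$ acts mixingly (in particular ergodically) on $X$. By amenability of $T_0$, the cocycle $c|_{T_0 \times X}$ admits a $T_0$-equivariant algebraic representation: a measurable $\phi \colon X \to G/H_0$ with $H_0 \le G$ algebraic, satisfying $\phi(tx) = c(t,x)\phi(x)$ for $t \in T_0$. Choose $(H_0,\phi)$ with $H_0$ minimal in the ordering developed earlier in the paper (least dimension, then least number of connected components). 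Minimality together with ergodicity of $T_0$ forces essential uniqueness of $\phi$ modulo the right action of $N_G(H_0)/H_0$ on $G/H_0$.

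Next, propagate equivariance along the chain. Inductively, suppose $\phi$ is $c$-equivariant for $\langle T_0, \ldots, T_i \rangle$. For $s \in T_{i+1}$, the cocycle identity together with the commutations $[T_j, T_{i+1}] = 1$ for $j \le i$ show that $\phi^s(x) \defq c(s,x)^{-1}\phi(sx)$ is again $\langle T_0, \ldots, T_i \rangle$-equivariant, so by uniqueness $\phi^s(x) = \phi(x)\cdot\lambda(s,x)^{-1}$ for a measurable cocycle $\lambda \colon T_{i+1}\times X \to N_G(H_0)/H_0$. Equating $\phi^s(t_i x) = c(t_i, x)\phi^s(x)$ with $\phi(t_i x)\lambda(s, t_i x)^{-1}$ for $t_i \in T_i$ gives $\lambda(s,\cdot)$ $T_i$-invariant; ergodicity of $T_i$ then makes $\lambda(s,\cdot)$ essentially constant, so the cocycle identity upgrades $\lambda$ to a continuous homomorphism $d_{i+1}\colon T_{i+1} \to N_G(H_0)/H_0$. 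Absorbing an appropriate measurable coboundary into $\phi$ yields $\phi(sx) = c(s,x)\phi(x)d_{i+1}(s)^{-1}$ for $s \in T_{i+1}$, with equivariance on earlier layers preserved. Iterating over $i$ and using that the $T_i$ topologically generate $T$, the $d_i$ assemble into a single continuous homomorphism $d\colon T \to N_G(H_0)/H_0$, and $\phi\colon X \to G/H_0$ satisfies $\phi(tx) = c(t,x)\phi(x)d(t)^{-1}$ for every $t \in T$.

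Finally, use the rigidity hypothesis to trivialize $H_0$. Lifting $\phi$ to a measurable section $\tilde\phi\colon X \to G$, the relation above yields $c'(t,x) \defq \tilde\phi(tx)^{-1}c(t,x)\tilde\phi(x) \in H_0 \cdot d(t) \subset N_G(H_0)$, so the cohomologous cocycle $c'$ takes values in $N_G(H_0)$. Since $G$ is adjoint simple, any nontrivial proper algebraic $H_0$ makes $N_G(H_0)$ a proper algebraic subgroup of $G$, contradicting the hypothesis; the alternative $H_0 = G$ is excluded because $H_0$ is amenable while the unbounded simple $G$ is not (boundedness of $G$ itself being ruled out by the hypothesis, applied to the trivial choice $\tilde\phi \equiv e$). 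Therefore $H_0 = \{e\}$, $c' = d$, and $\phi\colon X \to G$ conjugates $c$ to the continuous homomorphism $d$, as required.

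The main obstacle lies in the second step: the essential uniqueness of the minimal algebraic representation, and the subsequent Mautner-type step showing that $\lambda(s,\cdot)$ is essentially constant in $x$. This is exactly what the algebraic-representation machinery developed earlier in the paper is designed to deliver; the mixing hypothesis enters (beyond merely supplying $T_i$-ergodicity) to guarantee that the minimal algebraic hulls behave well under restriction to each $T_i$, so that the inductive uniqueness step applies cleanly at every stage of the chain.
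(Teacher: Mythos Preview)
Your argument has two genuine gaps.

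\textbf{The propagation step misreads the hypothesis.} You invoke ``the commutations $[T_j,T_{i+1}]=1$ for $j\le i$'', but the theorem only assumes $[T_i,T_{i+1}]=1$ for adjacent indices. Your uniqueness step needs $\phi^s$ (for $s\in T_{i+1}$) to be $T_0$-equivariant, since minimality of $H_0$ was established for the $T_0$-action; this requires $s$ to commute with $T_0$, which fails once $i\ge 1$. The paper circumvents this by working not on $X$ but on the bi-action $S\times T\curvearrowright Y=T\times X$ (with $S=T$) and using the \emph{initial object} in the category of bi-algebraic representations of the $(S,T_i)$ bi-space. Proposition~\ref{T'} shows, using only that $T_i$ and $T_{i+1}$ commute, that this same initial object $(\mathbf{G}/\mathbf{H},\phi)$ is also initial for the $(S,T_{i+1})$ bi-space; the symmetry argument there is what lets one chain along $T_0\to T_1\to T_2\to\cdots$ without global commutation. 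Your direct inductive scheme on $X$ has no such mechanism.

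\textbf{The exclusion of $H_0=G$ is unjustified.} You assert that $H_0$ is amenable, but $H_0$ is by definition the \emph{algebraic} hull of $c|_{T_0\times X}$, and amenability of $T_0$ does not force this to be amenable or proper. Over a local field of positive characteristic a compact subgroup can be Zariski-dense (e.g.\ $\mathrm{SL}_n(\mathbb{F}_q[[t]])$ in $\mathrm{SL}_n(\mathbb{F}_q((t)))$), so even when $c|_{T_0}$ is cohomologous to a compact-valued cocycle, its algebraic hull may be all of $G$. The paper handles precisely this case with a parallel machinery of \emph{bi-proper} representations (\S\ref{proper prelim}--\S\ref{proper reps}): if the $(S,T_0)$ bi-action on $Y$ is $c$-ergodic, Theorem~\ref{BDL} produces a nontrivial bi-proper representation, one takes its initial object $G/K$ with $K$ compact and Zariski-dense, observes via Corollary~\ref{NL} that $N_G(K)$ is compact, propagates along the $T_i$'s in this proper category, and concludes that $c$ is cohomologous to a cocycle valued in the compact group $N_G(K)$---contradicting the hypothesis. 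Your outline contains no analogue of this branch.
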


We remark that a when the groups $T_i$ appearing in Theorem~\ref{prezimmer} pairwise commute,
the mixing assumption on the action of $T$ on $X$ could be relaxed to the assumption of irreducibility with the same conclusion,
see Theorem~\ref{prod-cocycle}.
In case $T$ is a simple algebraic group over a local field,
mixing is automaticly implied by ergodicity (by Howe-Moore theorem) and the property of generation by successively commuting subgroups
coincides with the standard "higher-rank" property (in the proof of Theorem~\ref{zimmercor} we make this remark precise).
We note that there is a large variety of other groups (typically countable) that satisfy the
generation by successively commuting subgroups property for which Theorem~\ref{prezimmer} applies
once one forces a mixing assumption.
Prominent examples are
mapping class groups (of genus 2 or higher),
Automorphism groups of free groups (of rank 3 or higher)
and right angeled Artin groups (defined by a connected graph).

\subsection{Acknowledgment}
It is our pleasure to thank Bruno Duchesne and
Jean L\'{e}cureux
for their contribution.
We are grateful to Michael Puschnigg for spotting an inaccuracy in the definition of a morphism of bi-algebraic representations in an earlier draft.
We would also like to thank Tsachik Gelander for numerous discussions
as well as for his exposition of a preliminary version of this work in his Zurich ETH Nachdiplom lectures.

\section{Algebraic varieties as Polish spaces} \label{alg perlim}

In this section
we fix a field $k$ with an absolute value $|\cdot|$
as defined and discussed in \cite{valued}.
We assume that the absolute value is non-trivial and that $(k,|\cdot|)$ is complete and separable
(in the sense of having a countable dense subset).
The theory of manifolds over such fields is developed in \cite[Part II, Chapter I]{serre}.
We also fix a $k$-algebraic group ${\bf G}$.
we will discuss the category of $k$-${\bf G}$-varieties.
A $k$-${\bf G}$-variety is a $k$-variety endowed with an algebraic action of ${\bf G}$ which is defined over $k$.
A morphism of such varieties is a $k$-morphism which commutes with the ${\bf G}$-action.
By a $k$-coset variety we mean a variety 
of the form
${\bf G}/{\bf H}$ for some $k$-algebraic subgroup ${\bf H} < {\bf G}$
(see \cite[Theorem 6.8]{borel}).

Each $k$-${\bf G}$-variety gives rise to a topological space: $V={\bf V}(k)$ endowed with its $(k,|\cdot|)$-topology.
Topological notions, unless otherwise said, will always refer to this topology.
In particular $G={\bf G}(k)$ is a topological group.

Recall that a topological space is called Polish if it is separable and completely metrizable.
For a good survey on the subject we recommend \cite{kechris}.
We mention that the class of Polish spaces is closed under countable disjoint unions and countable products.
A $G_\delta$ subset of a Polish spaces is Polish so, in particular, a locally closed subset of a Polish space is Polish.
A Hausdorff space which admits a finite open covering by Polish open sets is itself Polish.
Indeed,
such a space is clearly metrizable (e.g.\ by Smirnov metrization theorem)
so it is Polish by
Sierpinski theorem \cite[Theorem 8.19]{kechris} which states that the image of an open map from a Polish space to a separable metrizable space is Polish.
Sierpinski theorem also implies that for a Polish group $K$ and a closed subgroup $L$, the quotient topology on $K/L$ is Polish.
Effros theorem \cite[lemma 2.5]{effros} says that the quotient topology on $K/L$ is the unique $K$-invariant Polish topology on this space.

\begin{proposition} \label{polishing}
The $k$-points of a $k$-variety form a Polish space.
In particular $G$ is a Polish group.
If ${\bf V}$ is a $k$-${\bf G}$-variety
then the $G$ orbits in $V$ are locally closed.
For $v\in V$ the orbit ${\bf G}v$ is a $k$-subvariety of ${\bf V}$.
The stabilizer ${\bf H}<{\bf G}$
is defined over $k$ and the orbit map ${\bf G}/{\bf H}\to {\bf G}v$ is defined over $k$.
Denoting $H={\bf H}(k)$, the induced map $G/H \to Gv$ is a homeomorphism,
when $G/H$ is endowed with the quotient space topology and $Gv$ is endowed with the subspace topology.
\end{proposition}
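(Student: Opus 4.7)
The plan is to establish the assertions in the order stated, drawing on three external inputs: the analytic theory of $k$-manifolds from \cite{serre}, the algebraic geometry of orbits from \cite{borel}, and the Polish-space facts recalled in this section (products and closed subsets of Polish spaces, Smirnov--Sierpinski, Effros).

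For the first assertion that $V = {\bf V}(k)$ is Polish, I would first treat the affine case by fixing a closed embedding ${\bf V} \hookrightarrow \bbA^n$ to identify $V$ with a closed subset of $k^n$, which is Polish since $k$ is complete separable metric and finite products and closed subsets of Polish spaces are Polish. In general, cover ${\bf V}$ by finitely many affine $k$-open subvarieties ${\bf U}_i$; the sets ${\bf U}_i(k)$ form a finite open cover of $V$ by Polish subspaces, whence $V$ is Polish by the Smirnov--Sierpinski observation recalled above. Specializing to ${\bf G}$ gives that $G$ is a Polish group. The purely algebraic content of the remaining assertions — that the orbit ${\bf G}v$ is a locally closed $k$-subvariety of ${\bf V}$, that ${\bf H} = \Stab_{\bf G}(v)$ is defined over $k$, and that the orbit morphism ${\bf G}/{\bf H} \to {\bf G}v$ is an isomorphism of $k$-varieties — is standard and I would cite \cite[Thm 6.8]{borel}.

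The crux is then to show that $G/H \to Gv$ is a homeomorphism and that all $G$-orbits are locally closed in $V$. I would argue that the orbit map $G \to ({\bf G}/{\bf H})(k)$ is open at the identity coset using the implicit/inverse function theorem for $k$-analytic manifolds from \cite{serre} applied to the submersive morphism ${\bf G} \to {\bf G}/{\bf H}$. By $G$-equivariance, openness propagates to all points, so the image $G/H \subseteq ({\bf G}/{\bf H})(k)$ is open; transporting via the $k$-isomorphism ${\bf G}/{\bf H} \cong {\bf G}v$, the orbit $Gv$ is open in $({\bf G}v)(k)$. Since ${\bf G}v$ is locally closed in ${\bf V}$, the set $({\bf G}v)(k)$ is locally closed in $V$, and therefore so is its open subset $Gv$. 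The continuous $G$-equivariant bijection $G/H \to Gv$ is then open, hence a homeomorphism; alternatively, once both sides are known to be Polish, Effros's uniqueness statement gives the same conclusion.

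The main obstacle — the only step where analytic information about $k$ is actually invoked, rather than pure topology or algebra — is the openness of the orbit map on $k$-points, which rests entirely on the $k$-analytic implicit function theorem. The rest of the proof is essentially a combination of the Smirnov--Sierpinski and Effros packages with standard facts on orbits of algebraic groups.
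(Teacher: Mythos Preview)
Your treatment of the Polish structure of $V={\bf V}(k)$ is the same as the paper's, and your closing appeal to Effros for the homeomorphism $G/H\to Gv$ is likewise how the paper finishes. The substantive difference is in how local closedness of $G$-orbits in $V$ is obtained: the paper does not argue this directly but cites the appendix of \cite{b-z} (remarking that the proof there, stated for non-archimedean local fields, goes through over any field with a complete non-trivial absolute value), and then invokes Effros.

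Your direct route has a gap in positive characteristic. The assertion that the orbit morphism ${\bf G}/{\bf H}\to{\bf G}v$ is a $k$-isomorphism of varieties is not something \cite[Theorem~6.8]{borel} provides, and it is false in general: the morphism is bijective on geometric points but need not be separable. Concretely, let $k$ have characteristic $p>0$, let ${\bf G}=\mathbb{G}_a$ act on ${\bf V}=\mathbb{A}^1$ by $g\cdot x=x+g^p$, and take $v=0$. Then ${\bf H}=\{e\}$ (as a reduced subgroup), ${\bf G}v=\mathbb{A}^1$, and the orbit morphism is the Frobenius $g\mapsto g^p$. For $k=\mathbb{F}_p((t))$ the $G$-orbit is $Gv=k^p$, which is closed but \emph{not open} in $({\bf G}v)(k)=k$; hence your transport step, from openness of $G/H$ in $({\bf G}/{\bf H})(k)$ to openness of $Gv$ in $({\bf G}v)(k)$, fails. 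The proposition nonetheless holds in this example---$k^p$ is closed in $k$, hence locally closed, and $g\mapsto g^p$ is a homeomorphism $k\to k^p$ since $(g_n-g)^p=y_n-y$ forces $|g_n-g|=|y_n-y|^{1/p}$---so it is your argument, not the statement, that breaks. In characteristic zero the orbit morphism is automatically separable, hence an isomorphism, and your route is valid; but the paper's setting explicitly includes positive characteristic, which is why it defers to \cite{b-z} for this step.
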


\begin{proof}
Since $k$ is complete and separable it is Polish and so is the affine space $\mathbb{A}^n(k) \simeq k^n$.
The set of $k$-points of a $k$-affine variety is closed in the $k$-points of the affine space, hence it is a Polish subspace.
It follows that the set of $k$-points of any $k$-variety is a Polish space,
as this space is a Hausdorff space which admits a finite open covering by Polish open sets - the $k$-points of its $k$-affine charts.

The fact that the $G$ orbits in $V$ are locally closed is
proven in the appendix of \cite{b-z}.
Note that in \cite{b-z} the statement is claimed only for non-archimedean local fields, but the proof is actually correct for any field with
a complete non-trivial absolute value, which is the setting of \cite[Part II, Chapter III]{serre}
on which \cite{b-z} relies.

For $v\in V$ the orbit ${\bf G}v$ is a $k$-subvariety of ${\bf V}$ by \cite[Proposition 6.7]{borel}.
The stabilizer ${\bf H}<{\bf G}$
is defined over $k$ by \cite[Proposition 1.7]{borel}
and we get an orbit map which is defined over $k$ by \cite[Theorem 6.8]{borel}.
Clearly $H$ is the stabilizer of $v$ in $G$ and the orbit map restricts to a continuous map from $G/H$ onto $Gv$.
Since $Gv$ is a Polish subset of $V$, as it is locally closed, we conclude by Effros theorem that the latter map is a homeomorphism.
\end{proof}


\section{Measurable cocycles} \label{erg perlim}

In this section we set our ergodic theoretical frame work and notations.
All the results we present here could be found in \cite{zimmer-book}, but for the reader's convenient
we gather them here along with self contained proofs.

By a Borel space we mean a set endowed with a $\sigma$-algebra.
A standard Borle space is a Borel space which is isomorphic as such to the underlying Borel space of a Polish topological space.
A Lebesgue space is a standard Borel space endowed with a measure class.
In particular, every coset space of an lcsc group is a Lebesgue space, when endowed with its Haar measure class.
Unless otherwise said we will always regard the Haar measure class when considering lcsc groups or their coset spaces as Lebesgue spaces.
Given an lcsc group $T$, a $T$-Lebesgue space is a Lebesgue space endowed with a measure class preserving action of $T$, defined up to null sets.
For a Polish group $G$ and  a $T$-Lebesgue space $X$, a measurable map $c:T\times X\to G$ is called a cocycle if for every $t,t' \in T$,
for a.e.\ $x\in X$, $c(tt',x)=c(t,t'x)c(t',x)$.
Two cocycles $c,c'$ are called cohomologous if there exists $\phi\in L^0(X,G)$ such that for every $t\in T$, for a.e.\ $x\in X$,
$c(t,x)\phi(x)=\phi(tx)c'(t,x)$.

\begin{example} \label{stand}
If $X=T/\Gamma$ for some closed subgroup $\Gamma<T$, we can choose a Borel section $\sigma:X \to T$
to the obvious map $\alpha:T\to X=T/\Gamma$.
The map $m:X\times \Gamma\to T$ given by $m(x,\gamma)=\sigma(x)\gamma$ is a Borel isomorphism
and we denote by $\pi:T\to \Gamma$ the map $\sigma(x)\gamma \mapsto \gamma$.
The map $\kappa:T\times X \to \Gamma$ given by $\kappa(t,x)= \pi(t\sigma(x))$
is easily checked to be a cocycle, which depends on the choice of the section $\sigma$ only up to cohomology.
It is called the standard cocycle of $\Gamma$ in $T$.
Note that for every  $x\in X$ and $\gamma\in \Gamma$ we have
$\gamma^{\sigma(x)}x=x$ and for every
$t\in T$,
\begin{equation} \label{keq}
\kappa(t\gamma^{\sigma(x)},x)=\pi(t\gamma^{\sigma(x)}\sigma(x))=\pi(t\sigma(x)\gamma)=\pi(t\sigma(x))\gamma=
\kappa(t,x)\gamma. 
\end{equation}
\end{example}

Two cocycles $c,c'$ are called cohomologous if there exists a.e.\ defined measurable map $\phi:X\to G$ such that for every $t\in T$, a.e.\ $x\in X$,
$c(t,x)\phi(x)=\phi(tx)c'(t,x)$.

\begin{prop}[cf.\ {\cite[Proposition 4.2.16]{zimmer-book}}] \label{lattice-vs-cocycle}
Fix an lcsc group $T$ and a closed subgroup $\Gamma<T$.
Let $X=T/\Gamma$, choose a section $\sigma:X\to T$ and let $\kappa$ be the associated standard cocycle.
Fix a Polish group $G$ and continuous homomorphisms $d:T\to G$ and $\delta:\Gamma\to G$.
Assume the cocyle $\delta\kappa=\delta\circ \kappa$ is cohomologous to the homomorphism $d$.
Then there exists $g\in G$ such that $\delta=d^g|_\Gamma$.
\end{prop}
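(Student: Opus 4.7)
The plan is to exploit equation~(\ref{keq}): applying the cohomology relation $\delta(\kappa(t,x))\phi(x) = \phi(tx)d(t)$ simultaneously at $t$ and at $ts$, where $s := \gamma^{\sigma(x)} = \sigma(x)\gamma\sigma(x)^{-1}$ (so that $sx = x$ and $\kappa(ts,x) = \kappa(t,x)\gamma$), will force a conjugation relation $\delta(\gamma) = g^{-1}d(\gamma)g$ with $g := d(\sigma(x_0))^{-1}\phi(x_0)^{-1}$ for a suitably chosen $x_0 \in X$.

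The technical heart is to legitimize the double substitution, since the cohomology identity is guaranteed only pointwise in $t$ (a.e.\ in $x$), while we want to apply it at two $x$-dependent times. Fubini is the remedy. The set
$$E = \{(t,x)\in T\times X : \delta(\kappa(t,x))\phi(x) = \phi(tx)d(t)\}$$
is Borel (the cocycle $\kappa$, the section $\sigma$, $\phi$, $d$, $\delta$, and the action map $T\times X\to X$ are all measurable). By hypothesis every $t$-slice of $E^c$ is null, so $(\mathrm{Haar}_T \times \mu_X)(E^c) = 0$; a second application of Fubini produces a co-null $X' \subset X$ such that, for every $x_0 \in X'$, the set $T_{x_0} := \{t\in T : (t,x_0)\in E\}$ has full Haar measure in $T$.

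Fix any $x_0 \in X'$ and any $\gamma\in\Gamma$, and set $s = \sigma(x_0)\gamma\sigma(x_0)^{-1}$. Right-invariance of Haar measure on $T$ gives that $T_{x_0}\cap T_{x_0}s^{-1}$ is co-null, hence non-empty, so there exists $t$ with both $t$ and $ts$ in $T_{x_0}$. The cohomology identity at $t$ and at $ts$, combined with $\kappa(ts,x_0) = \kappa(t,x_0)\gamma$ from (\ref{keq}), with $(ts)x_0 = tx_0$, and with the homomorphism properties of $d$ and $\delta$, collapses after cancellation to
$$\delta(\gamma)\,\phi(x_0) = \phi(x_0)\,d(\sigma(x_0))\,d(\gamma)\,d(\sigma(x_0))^{-1},$$
i.e.\ $\delta(\gamma) = g^{-1}d(\gamma)g$ with $g := d(\sigma(x_0))^{-1}\phi(x_0)^{-1}$. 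Since $g$ depends on $x_0$ but not on $\gamma$, this proves $\delta = d^g|_\Gamma$. The principal subtlety is the Fubini step that fixes $x_0$ before choosing $\gamma$; this, notably, obviates any countability assumption on $\Gamma$, because the conjugation relation is verified one $\gamma$ at a time against a single $x_0$.
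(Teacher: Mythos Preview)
Your argument is correct and is essentially the same as the paper's: both fix a good base point $x_0$ via Fubini, then for each $\gamma$ intersect the conull set $T_{x_0}$ with its right translate by $(\gamma^{\sigma(x_0)})^{-1}$ and invoke equation~(\ref{keq}) to cancel down to the conjugation identity, yielding $g=\phi(x_0)\,d(\sigma(x_0))$ (equivalently your $g^{-1}$, the discrepancy being only the convention for $d^g$). One cosmetic remark: the phrase ``right-invariance of Haar measure'' is not literally needed (and may fail if $T$ is not unimodular); what you actually use---and what suffices---is right \emph{quasi}-invariance, which guarantees that $T_{x_0}s^{-1}$ is still conull.
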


\begin{proof}
We are given a conull set $X'\subset X$ and a measurable map $\phi:X'\to G$ such that for every $t\in T$, a.e.\ $x\in X'$,
$\delta\kappa(t,x)\phi(x)=\phi(tx)d(t)$.
By Fubini Theorem there exists $x\in X'$ and a conull set $T'\subset T$ such that for every  $t\in T'$
we have $tx\in X'$ and $\delta\kappa(t,x)\phi(x)=\phi(tx)d(t)$.
We fix such an $x$.
Fix also $\gamma\in \Gamma$.
For $t$ in the conull set $T'\cap T'(\gamma^{\sigma(x)})^{-1}$ we obtain
both
\[ \delta\kappa(t,x)\phi(x)=\phi(tx)d(t) \quad \mbox{and} \quad \delta\kappa(t\gamma^{\sigma(x)},x)\phi(x)=\phi(tx)d(t)d(\gamma)^{d\sigma(x)}. \]
Using equation~(\ref{keq}) we now get
\[  \phi(tx)d(t)\phi(x)^{-1}\delta(\gamma)\phi(x)
=\delta\kappa(t, x)\phi(x)\phi(x)^{-1}\delta(\gamma)\phi(x)
=
\delta(\kappa(t, x)\gamma)\phi(x)
\]\[
=\delta\kappa(t\gamma^{\sigma(x)},x)\phi(x)=
\phi(t x)d(t)d(\gamma)^{d(\sigma(x))}
 \]
which gives
$\phi(x)^{-1}\delta(\gamma)\phi(x)=d(\gamma)^{d\sigma(x)}$, thus $\delta=d^{\phi(x)d\sigma(x)}|_\Gamma$.
\end{proof}


Given a Lebesgue space $X$ and a Borel space $V$ we let $L^0(X,V)$ be the space of classes of measurable maps from $X$ to $V$, defined up to null sets.
If $X$ is a $T$-Lebesgue space for an lcsc group $T$, and fixing a Polish group $G$, a cocycle $c:T\times X\to G$ and a $G$-Borel space $V$,
we obtain an action of $T$ on the space $L^0(X,V)$ given by defining
for $t\in T$ and $\alpha\in L^0(X ,V)$,
$t\alpha\in L^0(X ,V)$ by the formula $t\alpha(x)=c(t,t^{-1}x)\alpha(t^{-1}x)$.
The elements of the fixed point set $ L^0(X ,V)^T$ are called $c$-equivariant maps.
These are a.e.\ defined measurable maps $\phi:X\to V$ satisfying for every $t\in T$ and a.e.\ $x\in X$, $\phi(tx)=c(t,x)\phi(x)$.

\begin{prop}[cf.\ {\cite[Example 4.2.18(b)]{zimmer-book}}] \label{proper-cohom}
Fix an lcsc group $T$ and a $T$-Lebesgue space $X$.
Let $G$ be a Polish group and $c:T\times X\to G$ a measurable cocycle.
Let $H<G$ be a closed subgroup.
Then there exists a $c$-equivariant map $\phi:X\to G/H$ iff $c$ is cohomologous to a cocycle taking values in $H$.
\end{prop}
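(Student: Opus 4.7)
The statement is a standard one and the proof splits cleanly into the two implications. I would begin with the easy direction.

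Suppose $c$ is cohomologous to a cocycle $c':T\times X\to H$, so there exists $\psi\in L^0(X,G)$ with $c(t,x)\psi(x)=\psi(tx)c'(t,x)$ for every $t\in T$ and a.e.\ $x\in X$. Define $\phi:X\to G/H$ by $\phi(x)=\psi(x)H$. Then for every $t$ and a.e.\ $x$,
\[
\phi(tx)=\psi(tx)H=c(t,x)\psi(x)c'(t,x)^{-1}H=c(t,x)\psi(x)H=c(t,x)\phi(x),
\]
since $c'(t,x)\in H$. Hence $\phi$ is $c$-equivariant.

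For the converse, the plan is to lift a given $c$-equivariant map $\phi:X\to G/H$ to a measurable $\psi:X\to G$ and use it to conjugate $c$ into $H$. The key preliminary is a Borel section $s:G/H\to G$ of the quotient map. By Effros' theorem (quoted in Section~\ref{alg perlim}) the quotient topology on $G/H$ is Polish, so $G\to G/H$ admits a Borel section by a standard Kuratowski--Ryll-Nardzewski type selection argument. Set $\psi=s\circ\phi\in L^0(X,G)$, so that $\psi(x)H=\phi(x)$ for a.e.\ $x$. Define
\[
c'(t,x)\defq \psi(tx)^{-1}c(t,x)\psi(x).
\]
The defining identity $c(t,x)\psi(x)=\psi(tx)c'(t,x)$ is then tautological, so once $c'$ is shown to be an $H$-valued cocycle the proof is complete.

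To see that $c'(t,x)\in H$, apply the quotient map to both sides of $\psi(tx)c'(t,x)=c(t,x)\psi(x)$: one gets $\psi(tx)c'(t,x)H=c(t,x)\psi(x)H=c(t,x)\phi(x)=\phi(tx)=\psi(tx)H$, whence $c'(t,x)\in H$. The cocycle identity for $c'$ follows from that of $c$ by a direct computation:
\[
c'(tt',x)=\psi(tt'x)^{-1}c(tt',x)\psi(x)=\psi(tt'x)^{-1}c(t,t'x)\psi(t'x)\psi(t'x)^{-1}c(t',x)\psi(x)=c'(t,t'x)c'(t',x).
\]
The only genuine subtlety in the argument is the existence of the Borel section $s$, which rests on the Polishness of $G/H$ already established in the preliminary section; everything else is a verification.
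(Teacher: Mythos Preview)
Your proof is correct and follows exactly the same approach as the paper: use a Borel section $s:G/H\to G$ to lift $\phi$ to $\psi=s\circ\phi$ and check that $\psi(tx)^{-1}c(t,x)\psi(x)$ lands in $H$, and conversely push a conjugating $\psi$ down to $G/H$. The paper's version is simply terser, omitting the verifications of $H$-valuedness and the cocycle identity that you spell out.
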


\begin{proof}
For a a $c$-equivariant map $\phi:X\to G/H$ and Borel section $\sigma:G/H\to G$
to the obvious map $\alpha:G\to G/H$, one sets $\psi=\sigma\phi$ and checks easily that
$\psi(tx)^{-1}c(t,x)\psi(x)$ defines a cocycle that takes values in $H$.
Conversely, if $\psi:X\to G$ is such that $\psi(tx)^{-1}c(t,x)\psi(x)$ takes values in $H$
then $\phi=\alpha\psi$ is clearly $c$-equivariant.
\end{proof}


\begin{prop}[cf.\ {\cite[Lemma 5.2.6]{zimmer-book}}] \label{Gamma-c-ergodic}
Fix an lcsc group $T$ and a closed subgroup $\Gamma<T$.
Let $X=T/\Gamma$, choose a section $\sigma:X\to T$ and let $\kappa$ be the associated standard cocycle.
Fix a Polish group $G$
and a continuous homomorphism $\delta:\Gamma\to G$.
Assume $\delta\kappa$ is cohomologous to a cocyle taking values in a closed subgroup $H<G$.
Then 
for some $g\in G$, $\delta(\Gamma)^g<H$.
\end{prop}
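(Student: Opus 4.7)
The plan is to combine Proposition~\ref{proper-cohom} with a Fubini-style pointwise argument modeled on the proof of Proposition~\ref{lattice-vs-cocycle}.

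First, applying Proposition~\ref{proper-cohom} directly to the hypothesis, I obtain a $\delta\kappa$-equivariant measurable map $\phi:X\to G/H$, so that for every $t\in T$ the identity $\phi(tx)=\delta\kappa(t,x)\phi(x)$ holds for a.e.\ $x\in X$. This converts the cohomological hypothesis into a single geometric object from which we need to extract a conjugate of $\delta(\Gamma)$ lying inside $H$.

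Next I apply Fubini's theorem to the set where the above equivariance identity holds, viewed as a subset of $T\times X$, to obtain a point $x_0\in X$ together with a conull set $T'\subset T$ such that $\phi(tx_0)=\delta\kappa(t,x_0)\phi(x_0)$ for every $t\in T'$. Now fix an arbitrary $\gamma\in\Gamma$. On the conull set $T'\cap T'(\gamma^{\sigma(x_0)})^{-1}$, both $t$ and $t\gamma^{\sigma(x_0)}$ belong to $T'$, and since $\gamma^{\sigma(x_0)}x_0=x_0$ we also have $t\gamma^{\sigma(x_0)}x_0=tx_0$. Combining the equivariance identities at $t$ and at $t\gamma^{\sigma(x_0)}$, together with equation~(\ref{keq}), yields
\[
\delta\kappa(t,x_0)\phi(x_0)=\phi(tx_0)=\phi(t\gamma^{\sigma(x_0)}x_0)=\delta\kappa(t\gamma^{\sigma(x_0)},x_0)\phi(x_0)=\delta\kappa(t,x_0)\delta(\gamma)\phi(x_0).
\]
Cancelling $\delta\kappa(t,x_0)$ gives $\delta(\gamma)\phi(x_0)=\phi(x_0)$ in $G/H$. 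Writing $\phi(x_0)=gH$ for some $g\in G$, this means $g^{-1}\delta(\gamma)g\in H$ for every $\gamma\in\Gamma$, which is precisely the desired conclusion $\delta(\Gamma)^g<H$.

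There is no genuine obstacle here once Proposition~\ref{proper-cohom} is in hand: the whole argument is the standard Mackey-style transport of $\gamma$-translation on $\Gamma$ to $\gamma^{\sigma(x_0)}$-translation on $T$ via the section $\sigma$, just as in Proposition~\ref{lattice-vs-cocycle}. The only minor subtlety is ensuring that both equivariance identities hold at the single base point $x_0$, which is handled by intersecting two conull subsets of $T$.
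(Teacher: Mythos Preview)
Your proof is correct and follows essentially the same approach as the paper's. Both arguments invoke Proposition~\ref{proper-cohom} to produce a $\delta\kappa$-equivariant map $\phi:X\to G/H$, then exploit the relation $\gamma^{\sigma(x)}x=x$ together with equation~(\ref{keq}) to conclude that $\delta(\gamma)$ fixes $\phi(x)$, so that $\delta(\Gamma)$ lies in a point-stabilizer, i.e.\ a conjugate of $H$. The only cosmetic difference is packaging: the paper views $X\times G/H$ as a $T$-Borel space (using that $\delta\kappa$ is everywhere defined) and phrases the argument via the graph map $x\mapsto(x,\phi(x))$ at a ``generic'' $x$, whereas you spell out the Fubini bookkeeping explicitly in the style of Proposition~\ref{lattice-vs-cocycle}; your version is in fact a bit more careful about the null-set issues.
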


\begin{proof}
Note that the cocycle $\delta\kappa:T\times X\to G$ is an everywhere defined Borel cocycle,
thus $X\times G/H$ becomes a $T$-Borel space,  with the action given by $t(x,v)=(tx,\delta\kappa(t,x)v)$.
By Proposition~\ref{proper-cohom} there exists a $\delta\kappa$-equivariant map $\phi:X\to G/H$.
Observe that $x\mapsto (x,\phi(x))$ defines a $T$-equivariant map $X\to X\times G/H$.
Fix a generic $x\in X$.
For $\gamma\in \Gamma$, $\gamma^{\sigma(x)}x=x$, hence
$\gamma^{\sigma(x)}$ also stabilizes its image $(x,\phi(x))$.
By equation~(\ref{keq}),
\[ (x,\phi(x)) = \gamma^{\sigma(x)}(x,\phi(x))=(\gamma^{\sigma(x)}x,\delta\kappa(\gamma^{\sigma(x)},x)\phi(x))=
(\gamma^{\sigma(x)}x,\delta(\gamma)\phi(x)), \]
hence $\phi(x)=\delta(\gamma)\phi(x)$.
Thus $\delta(\Gamma)<\Stab_G(\phi(x))$.
The latter is a is a conjugate of $H$.
\end{proof}

\begin{prop}[cf.\ {\cite[Lemma 5.2.11]{zimmer-book}}] \label{coset contraction-p}
Let $T$ be an lcsc group and $X$ an ergodic $T$-Lebesgue space.
Let $G$ be a Polish group acting continuously on a Polish space $V$.
Assume all $G$ orbits are locally closed in $V$.
Let $c:T\times X\to G$ be a measurable cocycle.
Let $\phi\in L^0(X,V)^T$ be a $c$-equivariant map.
Then for some $v\in V$
there exists a $c$-equivariant map $\phi'\in L^0(X,G/H)$,
where $H$ is the stabilizer of $v$ in $G$,
such that $\phi=i\circ \phi'$
where $i:G/H \to V$ is the orbit map $gH \mapsto gv$.
\end{prop}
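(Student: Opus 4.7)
The plan is two-fold: first, use ergodicity to show that $\phi$ essentially takes values in a single $G$-orbit $\mathcal{O}=Gv$; second, use Effros's theorem to invert the orbit map and so define $\phi'$.

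For the single-orbit reduction, I would invoke the descriptive-set-theoretic consequence of the hypothesis that all $G$-orbits are locally closed, namely Effros's theorem that the orbit space $V/G$ is countably separated: there exists a countable family $\{B_n\}$ of $G$-invariant Borel subsets of $V$ that separates the $G$-orbits. For each $n$, the preimage $\phi^{-1}(B_n)$ is $T$-invariant mod null, because for every $t\in T$ and a.e.\ $x$ one has $\phi(tx)=c(t,x)\phi(x)$, so $G$-invariance of $B_n$ yields $\phi(tx)\in B_n \Leftrightarrow \phi(x)\in B_n$. Ergodicity of $T\acts X$ forces each such preimage to be null or conull, so there is a fixed sequence $(\epsilon_n)\in\{0,1\}^{\bbN}$ and a conull $X_0\subseteq X$ on which $1_{B_n}(\phi(x))=\epsilon_n$ for every $n$. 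Because $\{B_n\}$ separates orbits, $\phi(X_0)$ lies in a single orbit $\mathcal{O}=Gv$.

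For the lifting step, set $H\defq \Stab_G(v)$. Effros's theorem in its classical Polish-transformation-group form (the same input used in Proposition~\ref{polishing}) gives that, since $Gv$ is locally closed in $V$, the orbit map $i: G/H \to Gv$, $gH \mapsto gv$, is a homeomorphism when $G/H$ carries the quotient topology and $Gv \subseteq V$ the subspace topology. In particular $i$ is a Borel isomorphism onto the Borel subset $Gv$, so $\phi'\defq i^{-1}\circ \phi$ is well-defined and measurable on $X_0$ and satisfies $\phi = i\circ \phi'$. Its $c$-equivariance follows formally from that of $\phi$: the $G$-equivariance and injectivity of $i$ yield
\[
i\bigl(\phi'(tx)\bigr) = \phi(tx) = c(t,x)\phi(x) = c(t,x)\cdot i\bigl(\phi'(x)\bigr) = i\bigl(c(t,x)\phi'(x)\bigr),
\]
hence $\phi'(tx) = c(t,x)\phi'(x)$ for each $t$ and a.e.\ $x$.

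The hard part is the first step: the passage from locally closed orbits to a countable separating family of $G$-invariant Borel sets. This is not a formal consequence of the orbits being Borel and genuinely requires Effros's theorem for Polish transformation groups; once it is in hand, the remaining lifting and the verification of equivariance are routine.
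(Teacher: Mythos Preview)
Your proof is correct and follows essentially the same route as the paper: show via ergodicity and countable separation of the orbit space that $\phi$ lands a.e.\ in a single $G$-orbit, then use Effros's theorem to invert the orbit map and define $\phi'$. The only cosmetic difference is that the paper argues directly that $V/G$ is second countable and $T_0$ (the latter because locally closed orbits give locally closed points in the quotient) and then shows the pushed-forward measure concentrates on a single point, rather than citing Effros for the countable-separation property upfront.
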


\begin{proof}
Consider the orbit space
$V/G$ endowed with the quotient topology and Borel structure.
The map $X\to V \to V/G$ is Borel.
We denote by $\mu$ the measure on $V/G$ obtained by pushing the measure on $X$.
Since $V$ is Polish it has a countable basis thus so does $V/G$.
Let $\{U_n~|~n\in \bbN\}$ be sequence of subsets of $V/G$ consisting of the elements of a countable basis and their complements.
Set
\[ U=\bigcap \{U_n~|~n\in \bbN,~\mu(U_n^c)=0\}. \]
Then $\mu(U^c)=0$ and in particular $U$ is non-empty.
We claim that $U$ is a singleton.
Indeed,
since every $G$-orbit is locally closed in $V$, $V/G$ is $T_0$,
thus if $U$ would contain two distinct points we could find a basis set which separate them,
but by ergodicity either it or its complement would be $\mu$-null, which contradicts the definition of $U$.

Fixing $v\in V$ which is in the preimage of $U$ we conclude that $\phi(X)$ is essentially contained in $Gv$.
Letting $H<G$ be the stabilizer of $v$,
the orbit map $i:G/H \to Gv$ is a homeomorphism by
Effros theorem \cite[lemma 2.5]{effros}.
We are done by setting $\phi'=(i|_{G/H})^{-1} \circ \phi$.
\end{proof}

\section{Amenability and Metric Ergodicity} \label{A+ME}

\begin{defn}
An action of an lcsc group $T$ on a Lebesgue space $X$ is said to be metrically ergodic is for every metric space $V$
endowed with a continuous isometric action of $T$ we have that the obvious inclusion $V^T \hookrightarrow L^0(X,V)^T$,
given by sending a $T$-fixed point to the constant map from $X$ to this point, is onto.
Equivalently, every $T$-equivariant measurable function from $X$ to $V$ is essentially constant.
\end{defn}

\begin{example} \label{example}
For a closed subgroup $T_0<T$, it is easy to see that the action of $T$ on $T/T_0$ is metrically ergodic iff $T/T_0$ admits
no $T$-invariant continuous semi-metric, but the trivial one.
\end{example}

\begin{lemma} \label{me-e}
Let $T$ be an lcsc group and $X,B$ be $T$-Lebesgue spaces.
Assume the action on $X$ is ergodic and probability measure preserving and the action on $B$ is metrically ergodic.
Then the diagonal action on $B\times X$ is ergodic.
\end{lemma}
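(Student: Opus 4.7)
The plan is to test ergodicity of $B \times X$ against bounded $T$-invariant measurable functions, and to view such a function as a $T$-equivariant map from $B$ into the Hilbert space $L^2(X)$. Since $X$ carries a $T$-invariant probability measure, the regular representation makes $L^2(X)$ into a Polish metric space on which $T$ acts continuously by isometries, so metric ergodicity of $B$ is directly applicable.

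In detail, I would start with an arbitrary $T$-invariant measurable set $E \subset B \times X$ and consider its indicator $f = \mathbf{1}_E \in L^2(B \times X)$. Via the canonical isomorphism $L^2(B \times X) \cong L^2(B, L^2(X))$, the function $f$ corresponds to a measurable map $\tilde f : B \to L^2(X)$ given by $\tilde f(b) = f(b,\cdot)$. I would verify, using Fubini together with the $T$-invariance of $f$ and the measure-preserving property of the action on $X$, that for every $t \in T$ one has $\tilde f(tb) = t \cdot \tilde f(b)$ for a.e.\ $b \in B$, where on the right we use the unitary $T$-action on $L^2(X)$. Hence $\tilde f \in L^0(B, L^2(X))^T$.

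Now metric ergodicity of $B$ applied to $V = L^2(X)$ (with its norm metric and the continuous isometric $T$-action) yields that $\tilde f$ is essentially constant, equal to some $\phi_0 \in L^2(X)^T$. Ergodicity of the $T$-action on $X$ forces $L^2(X)^T$ to consist of constants, so $\phi_0$ is a constant function on $X$. Unwinding the identification, $f(b,x)$ is essentially constant on $B \times X$; since $f$ was an indicator, $E$ is null or co-null, proving ergodicity.

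The only non-routine point is the measurability and $T$-equivariance of the assignment $b \mapsto f(b,\cdot) \in L^2(X)$: one has to know that the decomposition $L^2(B \times X) \cong L^2(B, L^2(X))$ respects the diagonal $T$-action and that $T$ acts continuously by isometries on $L^2(X)$. Both are standard consequences of $X$ being a probability space on which $T$ acts measure-preservingly (continuity of the regular representation of an lcsc group on $L^2$ of a standard p.m.p.\ space), and no further structure on $B$ beyond the hypothesis of metric ergodicity is needed.
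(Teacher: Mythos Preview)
Your argument is correct and essentially identical to the paper's: both send a bounded $T$-invariant function on $B\times X$ to a $T$-equivariant map $B\to L^2(X)$ via Fubini, invoke metric ergodicity of $B$ (using that the Koopman representation on $L^2(X)$ is continuous and isometric because $X$ is p.m.p.) to conclude the map is essentially constant, and then use ergodicity of $X$ to see the constant value is itself constant. The only cosmetic remark is that the isomorphism $L^2(B\times X)\cong L^2(B,L^2(X))$ is not quite the right container, since $B$ carries only a measure class; but you only actually use that $b\mapsto f(b,\cdot)$ lands in $L^0(B,L^2(X))^T$, which is exactly what the paper does as well.
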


\begin{proof}
For $f\in L^{\infty}(B\times X)^T$, using Fubini theorem, we define $F:B\to L^\infty(X) \subset L^2(X)$
by $F(b)(x)=f(b,x)$.
$F$ is easily checked to be $T$-equivariant.
The image of $F$ must be $T$-invariant, by the metric ergodicity of $B$, as the $T$-action on $L^2(X)$ is continuous.
By ergodicity of $X$ this image is a constant function, thus $f$ is constant.
\end{proof}

Given a standard Borel spaces $V$, a Lebesgue space $X$ and an essentially surjective Borel map $\pi: V\to X$
(that is, $\pi$ is measurable for a Borel model of $X$ and its image has full measure),
we denote by $L^0(\pi)$ the set of all equivalence classes of sections of $\pi$, defined up to null sets in $X$.

\begin{defn}[{\cite[Definition~4.3.1]{zimmer-book}}]
An action of an lcsc group $T$ on a Lebesgue space $X$ is said to be amenable if for every $T$-Borel space $V$
and an essentially surjective $T$-equivariant Borel map $\pi: V\to X$ with compact convex fibers,
such that the $T$ action restricted to the fibers is by continuous affine maps,
one has $L^0(\pi)^T\neq \emptyset$. That is, every $T$-Borel bundle of convex compact sets over $X$ admits an invariant measurable section.
\end{defn}

\begin{example}[{\cite[Proposition~4.3.2]{zimmer-book}}] \label{amen subgroup}
For a closed subgroup $T_0<T$, the action of $T$ on $T/T_0$ is amenable iff $T_0$ is amenable.
\end{example}

\begin{theorem}[Kaimanovich-Zimmer] \label{boundary}
For every lcsc group $T$ there exists a $T$-Lebesgue space $B$ possessing the following two properties:
the action of $T$ on $B$ is amenable and the diagonal action of $T$ on $B\times B$ is metrically ergodic.
\end{theorem}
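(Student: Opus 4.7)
The plan is to take $B$ to be the Poisson--Furstenberg boundary of a random walk on $T$. Fix a symmetric admissible probability measure $\mu$ on $T$ --- for instance any symmetric probability density supported on a compact generating neighborhood of the identity. Let $(\Omega,\mathbb{P}) = (T^{\bbN},\mu^{\otimes\bbN})$ be the space of increments, $\omega_n = g_1g_2\cdots g_n$ the resulting right random walk, and $(B,\nu)$ the Poisson boundary, equipped with the $\mu$-stationary hitting measure $\nu$.

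Amenability of $T\acts B$ is Zimmer's classical result. Given a $T$-Borel bundle $\pi\colon V\to B$ of compact convex sets with continuous affine fiberwise action, pick any bounded measurable section $s_0$ and iterate the Markov operator associated to $\mu$: because the fibers are convex, integration against $\mu$ gives a fiberwise barycenter $s_{n+1}(b) = \bbE[\omega_1^{-1} s_n(\omega_1 b)]$. The resulting sequence is a reverse martingale in each fiber; Banach--Alaoglu and almost sure convergence in the fibers produce an a.e.\ limit, which is a measurable $T$-invariant section.

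For metric ergodicity of $T\acts B\times B$, observe that symmetry of $\mu$ makes $(B\times B,\nu\otimes\nu)$ the Poisson boundary of the bilateral walk $(\omega_n)_{n\in\bbZ}$, with the two coordinates recording respectively the future and past of the walk. Let $(V,d)$ be a metric space with a continuous isometric $T$-action and $\phi\colon B\times B\to V$ a $T$-equivariant measurable map; replacing $V$ by the closure of its essential image we may assume $V$ is separable. The $T$-invariance of $d$ means that for any two bilateral paths $\omega,\omega'$ with the same future and past boundaries, the quantity $d(\omega_n^{-1}\phi(b_+,b_-),(\omega'_n)^{-1}\phi(b_+,b_-))$ is a martingale for the shift. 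A coupling/reverse martingale argument due to Kaimanovich shows this distance is a.s.\ zero, which forces $\phi$ to be essentially constant.

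The main obstacle is this last step. Amenability of Poisson boundaries is foundational, and ordinary double ergodicity of $B\times B$ is also classical. What is subtle is the strengthening to \emph{metric} ergodicity, where the target is only a metric space with no linear structure available for averaging. The resolution is Kaimanovich's martingale/coupling argument, which is carried out intrinsically in the metric space $V$ and is the technical heart of the theorem; the symmetry of $\mu$ and the bilateral description of $B\times B$ are essential to making the argument go through.
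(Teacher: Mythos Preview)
Your outline is correct and follows exactly the route the paper points to: rather than giving its own argument, the paper simply cites Kaimanovich's Poisson-boundary construction and the remark in Glasner--Weiss that the same martingale reasoning already yields metric (not merely coefficient-Banach) double ergodicity. One cosmetic point: your sample choice of $\mu$ supported on a compact generating neighborhood presumes $T$ is compactly generated; for a general lcsc $T$ one takes any symmetric admissible spread-out measure, but this does not affect the argument.
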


\begin{remark}
In \cite{BF-metric} a slightly stronger theorem is proven, but we will not discuss this generalization here.
\end{remark}

\begin{proof}[On the proof of Theorem~\ref{boundary}]
In \cite{kaimanovich} the weaker statement that every group posses a strong boundary in the sense of \cite{BM} is proven,
but the same proof actually proves Theorem~\ref{boundary},
as explained in \cite[Remark~4.3]{GW}.
\end{proof}


\begin{prop} \label{product-stable}
Let $T_1,\ldots,T_n$ be lcsc groups and set $T=T_1\times\cdots\times T_n$.
For each $i$ let $B_i$ be a $T_i$-Lebesgue space and set $B=B_1\times \cdots\times B_n$.
If for each $i$ the action of $T_i$ on $B_i$ is amenable then so is the action of $T$ on $B$.
If for each $i$ the action of $T_i$ on $B_i$ is metrically ergodic then so is the action of $T$ on $B$.
\end{prop}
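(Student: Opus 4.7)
The plan is to reduce to $n=2$ and induct; both statements follow by successively grouping the rightmost two factors $T_{n-1}\times T_n$ into a single factor and applying the two-factor case, so I describe only the $n=2$ argument below.

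\textbf{Metric ergodicity.} Given a metric space $V$ with a continuous isometric $T=T_1\times T_2$-action and $f\in L^0(B_1\times B_2,V)^T$, the plan is to disintegrate along $B_1$. Replacing $V$ by the separable closure of the essential image of $f$, I may assume $V$ is separable; after passing to an equivalent probability measure on $B_2$, endow $L^0(B_2,V)$ with its Polish topology of convergence in measure. Then $F\colon B_1\to L^0(B_2,V)$ defined by $F(b_1)(b_2)=f(b_1,b_2)$ is measurable, and a Fubini argument applied to the a.e.\ identity $f((t_1,t_2)(b_1,b_2))=(t_1,t_2)f(b_1,b_2)$ shows that for a.e.\ $b_1$ the map $F(b_1)$ is $T_2$-equivariant. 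Metric ergodicity of $T_2\acts B_2$ then forces each such $F(b_1)$ to be essentially constant, producing a measurable map $\tilde F\colon B_1\to V$. By the isometric continuity of the $T_1$-action on $V$, $\tilde F$ is $T_1$-equivariant, so metric ergodicity of $T_1\acts B_1$ makes it essentially constant, whence $f$ is essentially constant.

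\textbf{Amenability.} Given a $T$-Borel bundle $\pi\colon V\to B_1\times B_2$ of convex compact sets with fiberwise continuous affine $T$-action, I seek a $T$-invariant measurable section, and the idea is to integrate out $T_2$ first. For each $b_1\in B_1$, the restriction of $\pi$ to $\{b_1\}\times B_2$ is a $T_2$-bundle of convex compact sets, so by amenability of $T_2\acts B_2$ the set
\[
S_{b_1} \;=\; L^0\bigl(V|_{\{b_1\}\times B_2}\to B_2\bigr)^{T_2}
\]
of $T_2$-equivariant measurable sections is nonempty. Since $T_1$ commutes with $T_2$ inside $T$, the $T_1$-action on $V$ sends $S_{b_1}$ to $S_{t_1 b_1}$ by an affine map. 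The plan is to present $\{S_{b_1}\}_{b_1\in B_1}$ as a $T_1$-Borel bundle of convex compact sets over $B_1$, apply amenability of $T_1\acts B_1$ to obtain a measurable selection $b_1\mapsto s_{b_1}\in S_{b_1}$, and read this off as a section of $\pi$ which is both $T_1$- and $T_2$-invariant, hence $T$-invariant.

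\textbf{Main obstacle.} The genuine content lies in the organizational step of the amenability argument: realizing $\{S_{b_1}\}$ as a Borel bundle of convex compact sets in a form to which the definition of amenability of $T_1\acts B_1$ can be applied. Concretely, one fixes an affine embedding of the fibers of $\pi$ into a separable Banach space $E$, so that each $S_{b_1}$ becomes a weak-$\ast$ closed bounded convex subset of $L^\infty(B_2, E)$, hence weak-$\ast$ compact by Banach--Alaoglu; one then verifies that the induced $T_1$-action on this derived bundle is continuous affine in the weak-$\ast$ topology and that the assignment $b_1\mapsto S_{b_1}$ is Borel in the appropriate sense. Once these measurability and topology issues are resolved, both applications of amenability are formal, and the same bundle-theoretic bookkeeping underlies the induction step.
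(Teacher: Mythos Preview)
Your proposal is correct and follows essentially the same approach as the paper: reduce to $n=2$ by induction, then for each property disintegrate along one factor and apply the hypothesis for $T_2\acts B_2$ fiberwise, then for $T_1\acts B_1$ on the resulting object. The paper's proof is considerably terser---it simply asserts the identification $L^0(\pi)^T\simeq L^0(B_1,L^0(\pi|_{C_{b_1}})^{T_2})^{T_1}$ and the convex-compact bundle structure without justification---so your explicit discussion of the weak-$\ast$ compactness and measurability issues in the ``main obstacle'' paragraph is exactly the content the paper suppresses.
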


\begin{proof}
We will prove the statements for $n=2$ which is clearly enough, by an inductive argument.

Let $\pi:C\to B$ be a
$T$-Borel bundle of convex compact sets over $B$.
For every $b_1\in B_1$ we let $C_{b_1}=\pi^{-1}(\{b_1\}\times B_2)$ and
$L^0(\pi|_{C_{b_1}})$ be the corresponding space of sections.
We view these spaces as a convex compact bundle over $B_1$ and denote by $L^0(B_1,L^0(\pi|_{C_{b_1}}))$ its space of section.
Its obvious identification with $L^0(\pi)$ gives an identification
$L^0(\pi)^T\simeq L^0(B_1,L^0(\pi|_{C_{b_1}})^{T_2})^{T_1}$.
The right hand side is non-empty by our amenability assumptions, hence so is the left hand side.

Let $V$ be a $T$-metric space and $\phi:B\to V$  a measurable $T$-map.
For a.e.\ $b_1\in B_1$, $\phi_{\{b_1\}\times B_2}$ is essentially constant, as $B_2$ is $T_2$ metrically ergodic,
thus $\phi$ reduces to a map $\phi':B_1\to V$ which is again essentially constant, as $B_1$ is $T_1$ metrically ergodic.
Thus $\phi$ is essentially constant.
\end{proof}

\section{Algebraic representation of ergodic actions} \label{alg-rep}

Throughout this section we fix
\begin{itemize}
\item an lcsc group $T$,
\item an ergodic $S$-Lebesgue space $Y$,
\item a field $k$ with an absolute value which is separable and complete (as a metric space),
\item a $k$-algebraic group ${\bf G}$,
\item a measurable cocycle
$c:T \times Y \to G$,
where $G={\bf G}(k)$ is regarded as a Polish group (Proposition~\ref{polishing}).
\end{itemize}

\begin{defn}
Given all the data above, an algebraic representation of $Y$
consists of the following data
\begin{itemize}
\item a $k$-${\bf G}$-algebraic variety ${\bf V}$,
\item a measurable map $\phi:Y \to {\bf V}(k)$ such that for every $t\in T$ and almost every $y\in Y$,
\[ \phi(ty)=c(t,y)\phi(y). \]
\end{itemize}
We abbreviate the notation by saying that ${\bf V}$ is an algebraic representation of $Y$,
and denote $\phi$ by $\phi_{\bf V}$ for clarity.
A morphism from the algebraic representation ${\bf U}$ to the algebraic representation ${\bf V}$ consists of
\begin{itemize}
\item a $k$-algebraic map $\psi:{\bf U}\to {\bf V}$ which is ${\bf G}$ equivariant,
and such that $\phi_{\bf V}=\psi\circ \phi_{\bf U}$.
\end{itemize}
An algebraic representation ${\bf V}$ of $Y$ is said to be a coset algebraic representation
if in addition
${\bf V}={\bf G}/{\bf H}$ for some $k$-algebraic subgroup ${\bf H}<{\bf G}$.
\end{defn}

\begin{prop} \label{coset contraction}
Let ${\bf V}$ be an algebraic representation of $Y$.
Then there exists a coset algebraic representation ${\bf G}/{\bf H}$
and a morphism of representations from ${\bf G}/{\bf H}$ to ${\bf V}$,
that is a $k$-${\bf G}$-algebraic map $i:{\bf G}/{\bf H}\to {\bf V}$
such that $\phi_{\bf V}=i\circ \phi_{{\bf G}/{\bf H}}$.
\end{prop}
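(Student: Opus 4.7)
The plan is to apply Proposition~\ref{coset contraction-p} to the measurable $c$-equivariant map $\phi_{\bf V}:Y\to V={\bf V}(k)$ and then promote the resulting measurable coset picture to an algebraic one using Proposition~\ref{polishing}.

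First I would verify the hypotheses of Proposition~\ref{coset contraction-p}. The space $V={\bf V}(k)$ is Polish (Proposition~\ref{polishing}), $G={\bf G}(k)$ is a Polish group acting continuously on $V$, all $G$-orbits in $V$ are locally closed (again Proposition~\ref{polishing}), and $Y$ is ergodic by assumption. Hence there exists $v\in V$, with $H=\Stab_G(v)$, and a $c$-equivariant measurable map $\phi':Y\to G/H$ such that $\phi_{\bf V}=j\circ\phi'$, where $j:G/H\to Gv$ is the homeomorphism provided by Proposition~\ref{polishing}.

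Next I would extract the algebraic data from Proposition~\ref{polishing}. The set-theoretic stabilizer $H$ is exactly the group of $k$-points of the $k$-algebraic stabilizer ${\bf H}=\Stab_{\bf G}(v)$; the orbit ${\bf G}v$ is a $k$-subvariety of ${\bf V}$; and the orbit morphism $i:{\bf G}/{\bf H}\to{\bf G}v\hookrightarrow{\bf V}$ is a $k$-${\bf G}$-morphism which, on the level of $k$-points, restricts to the homeomorphism $j$ composed with the inclusion $Gv\hookrightarrow V$. Define $\phi_{{\bf G}/{\bf H}}:Y\to({\bf G}/{\bf H})(k)$ as the composition of $\phi'$ with the natural continuous injection $G/H\hookrightarrow({\bf G}/{\bf H})(k)$ (the $k$-points of the map ${\bf G}\to{\bf G}/{\bf H}$ descend to this injection because ${\bf H}(k)=H$). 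This composition is measurable and $c$-equivariant, the latter because $\phi'$ was $c$-equivariant and the inclusion is $G$-equivariant; and by construction $i\circ\phi_{{\bf G}/{\bf H}}=j\circ\phi'=\phi_{\bf V}$. Thus ${\bf G}/{\bf H}$ with $\phi_{{\bf G}/{\bf H}}$ is the required coset algebraic representation, and $i$ is the desired morphism.

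The only subtlety worth watching is to keep the two quotients straight: the topological quotient $G/H$ carrying its Effros topology on the one hand, and the $k$-points $({\bf G}/{\bf H})(k)$ of the algebraic coset variety on the other. These need not coincide, but Proposition~\ref{polishing} identifies $G/H$ homeomorphically with the orbit $Gv\subset({\bf G}v)(k)=i(({\bf G}/{\bf H})(k))$, so the target of $\phi'$ embeds canonically and continuously into $({\bf G}/{\bf H})(k)$; this is exactly what legitimizes viewing $\phi_{{\bf G}/{\bf H}}$ as a map into the $k$-points of the algebraic variety ${\bf G}/{\bf H}$, and therefore as an algebraic representation in the sense required.
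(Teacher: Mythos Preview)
Your proof is correct and follows essentially the same route as the paper: apply Proposition~\ref{coset contraction-p} (using Proposition~\ref{polishing} to verify its hypotheses), identify $H$ with ${\bf H}(k)$ for the algebraic stabilizer ${\bf H}$, promote the orbit map to the $k$-${\bf G}$-morphism $i:{\bf G}/{\bf H}\to{\bf V}$, and push $\phi'$ into $({\bf G}/{\bf H})(k)$ via the natural embedding $G/H\hookrightarrow({\bf G}/{\bf H})(k)$. Your added remark distinguishing $G/H$ from $({\bf G}/{\bf H})(k)$ is exactly the point the paper handles in its last sentence; one small caveat is that your parenthetical identity $({\bf G}v)(k)=i(({\bf G}/{\bf H})(k))$ need not hold literally (the algebraic orbit map can fail to be an isomorphism in positive characteristic), but you never actually use it, so the argument stands.
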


\begin{proof}
Denoting $V={\bf V}(k)$ and using Proposition~\ref{polishing},
we obtain by Proposition~\ref{coset contraction-p} the existence of some $v\in V$
and a $c$-equivariant map $\phi'\in L^0(Y,G/H)$,
where $H$ is the stabilizer of $v$ in $G$,
such that $\phi=i\circ \phi'$
where $i:G/H \to V$ is the orbit map $gH \mapsto gv$.
Clearly $H={\bf H}(k)$, where ${\bf H}$ is the stabilizer of $v$ in ${\bf G}$,
and $i$ extends to a $k$-${\bf G}$-algebraic map ${\bf G}/{\bf H}\to {\bf V}$.
We are done by extending the codomain of $\phi'$ to ${\bf G}/{\bf H}(k)$ vis the natural imbedding $G/H \hookrightarrow {\bf G}/{\bf H}(k)$.
\end{proof}

In \cite[Definition~9.2.2]{zimmer-book}
Zimmer defined the notion "algebraic hull of a cocyle".
We will not discuss this notion here,
but we do point out its close relation with the following theorem (to be precise, it coincides with the group ${\bf H}_0$
appearing in the proof below).

\begin{theorem}[cf.\ {\cite[Proposition~9.2.1]{zimmer-book}}] \label{a-initial}
The category of algebraic representations of $Y$ has an initial object.
Moreover, there exists an initial object which is a coset algebraic representation.
\end{theorem}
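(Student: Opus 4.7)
The plan is to construct an initial object of the form ${\bf G}/{\bf H}_0$ by a Noetherian-style minimality argument, then lift initiality from coset representations to arbitrary algebraic representations using Proposition~\ref{coset contraction}.

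I would first consider the family $\mathcal{H}$ of $k$-algebraic subgroups ${\bf H}<{\bf G}$ for which there exists a $c$-equivariant measurable map $Y\to({\bf G}/{\bf H})(k)$. This family is nonempty, as ${\bf H}={\bf G}$ works with a constant map. Because closed $k$-subgroups of ${\bf G}$ are closed subvarieties, the poset of such subgroups satisfies the descending chain condition; in particular, the lexicographic invariant $\iota({\bf H}):=(\dim{\bf H},\ |{\bf H}/{\bf H}^0|)$ attains a minimum on $\mathcal{H}$. I would fix ${\bf H}_0\in\mathcal{H}$ realizing this minimum, together with a witnessing $c$-equivariant map $\phi_0:Y\to({\bf G}/{\bf H}_0)(k)$.

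Next I would show that $({\bf G}/{\bf H}_0,\phi_0)$ is initial among coset representations. Given another coset representation $({\bf G}/{\bf H}_1,\phi_1)$, apply Proposition~\ref{coset contraction} to the $c$-equivariant product map $\phi_0\times\phi_1:Y\to({\bf G}/{\bf H}_0\times{\bf G}/{\bf H}_1)(k)$. This yields a $k$-subgroup ${\bf L}<{\bf G}$, a $c$-equivariant $\phi_{\bf L}:Y\to({\bf G}/{\bf L})(k)$, and a ${\bf G}$-equivariant $k$-morphism $j:{\bf G}/{\bf L}\to{\bf G}/{\bf H}_0\times{\bf G}/{\bf H}_1$ satisfying $j\circ\phi_{\bf L}=(\phi_0,\phi_1)$. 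Writing $j(e{\bf L})=(v_0,v_1)$, one has ${\bf L}=\Stab_{\bf G}(v_0)\cap\Stab_{\bf G}(v_1)\subseteq\Stab_{\bf G}(v_0)$; the stabilizer $\Stab_{\bf G}(v_0)$ is a ${\bf G}(\bar k)$-conjugate of ${\bf H}_0$ and therefore has the same value of $\iota$. Since ${\bf L}\in\mathcal{H}$, the minimality of $\iota$ forces ${\bf L}=\Stab_{\bf G}(v_0)$, so $\Stab_{\bf G}(v_0)\subseteq\Stab_{\bf G}(v_1)$, the first projection ${\bf G}/{\bf L}\to{\bf G}/{\bf H}_0$ is a ${\bf G}$-equivariant $k$-isomorphism, and composing its inverse with the second projection produces a ${\bf G}$-equivariant $k$-morphism $\psi:{\bf G}/{\bf H}_0\to{\bf G}/{\bf H}_1$ with $\psi\circ\phi_0=\phi_1$.

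For an arbitrary algebraic representation $({\bf V},\phi_{\bf V})$, Proposition~\ref{coset contraction} factors $\phi_{\bf V}$ through some coset representation ${\bf G}/{\bf H}_1\to{\bf V}$, to which the previous step applies. Uniqueness of the resulting morphism follows from a homogeneity argument: the equalizer of two $k$-${\bf G}$-morphisms ${\bf G}/{\bf H}_0\to{\bf V}$ that agree after precomposition with $\phi_0$ is a closed ${\bf G}$-invariant $k$-subvariety of ${\bf G}/{\bf H}_0$; by geometric transitivity of the ${\bf G}$-action it is either empty or everything, and since it contains the essential image of $\phi_0$ it must be everything. The main obstacle I anticipate is the minimality step: verifying that the invariant $\iota$ is stable under ${\bf G}(\bar k)$-conjugation and that ${\bf L}\subseteq\Stab_{\bf G}(v_0)$ with matching $\iota$ forces equality, which I would argue by noting that ${\bf L}^0$ is a closed connected subgroup of $\Stab_{\bf G}(v_0)^0$ of equal dimension, hence equal, and then that minimality in the component count forces ${\bf L}=\Stab_{\bf G}(v_0)$.
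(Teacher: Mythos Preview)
Your proposal is correct and follows essentially the same route as the paper: pick a minimal ${\bf H}_0$ among subgroups admitting a coset representation, then for an arbitrary target use Proposition~\ref{coset contraction} on the product with ${\bf G}/{\bf H}_0$ and invoke minimality to invert the projection to ${\bf G}/{\bf H}_0$. The paper does this in one step (directly with ${\bf V}\times{\bf G}/{\bf H}_0$) rather than your two steps (first coset targets, then arbitrary), and appeals to the Noetherian property abstractly rather than via your explicit invariant $\iota$; your version is slightly more explicit about why the projection ${\bf G}/{\bf L}\to{\bf G}/{\bf H}_0$ is an isomorphism, which is a point the paper passes over quickly.
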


\begin{proof}
We consider the collection
\[ \{{\bf H}<{\bf G}~|~{\bf H}\mbox{ is defined over } k \mbox{ and there exists a coset representation to } {\bf G}/{\bf H} \}. \]
This is a non-empty collection as it contains ${\bf G}$.
By the Neotherian property, this collection contains a minimal element.
We choose such a minimal element ${\bf H}_0$
and fix corresponding
$\phi_0:Y \to ({\bf G}/{\bf H}_0)(k)$.
We argue to show that this coset representation is the required initial object.

Fix any algebraic representation of $Y$, ${\bf V}$.
It is clear that, if exists, a morphism of algebraic representations from ${\bf G}/{\bf H}_0$ to ${\bf V}$ is unique, as two different ${\bf G}$-maps
${\bf G}/{\bf H}_0\to {\bf V}$ agree nowhere.
We are left to show existence.
To this end we consider
the product representation ${\bf V}\times {\bf G}/{\bf H}_0$ given by $\phi=\phi_{\bf V}\times \phi_0$.
Applying Lemma~\ref{coset contraction} to this product representation we obtain the commutative diagram


\begin{equation} \label{diag-AG}
\xymatrix{ Y \ar@{.>}[r] \ar[d]^{\phi_{\bf V}} \ar[rd]^{\phi} \ar@/^3pc/[rrd]^{\phi_0} & {\bf G}/{\bf H} \ar@{.>}[d]_{i} &  \\
		   {\bf V} & {\bf V}\times {\bf G}/{\bf H}_0 \ar[r]^{~~p_2} \ar[l]_{p_1~~~~} & {\bf G}/{\bf H}_0  }
\end{equation}

By the minimality of ${\bf H}_0$, the ${\bf G}$-morphism $p_2\circ i:{\bf G}/{\bf H} \to {\bf G}/{\bf H}_0$ must be a $k$-isomorphism.
We thus obtain the $k$-${\bf G}$-morphism
\[ p_1\circ i \circ (p_2\circ i)^{-1}:{\bf G}/{\bf H}_0 \to {\bf V}. \]
\end{proof}


\begin{defn} \label{c-ergodic}
We say that the $T$-action on $Y$ is $c$-ergodic if  for every representation ${\bf V}$,
$\phi_{\bf V}$ is essentially constant and its essential image is a ${\bf G}$-fixed point.
Equivalently, the action is $c$-ergodic if the initial coset representation alluded to in Theorem~\ref{a-initial}
is the one point set ${\bf G}/{\bf G}$.
\end{defn}

\begin{prop} \label{c-ergodic-hull}
The $T$-action on $Y$ is $c$-ergodic iff the cocycle $c$ is not cohomologous to a cocycle taking values in a proper algebraic subgroup of $G$.
\end{prop}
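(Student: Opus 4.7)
The plan is to prove the two implications separately, both by direct application of the propositions already established earlier in the section. The key observations are (i) Proposition~\ref{proper-cohom} gives the precise bridge between $c$-equivariant maps into $G/H$ and cocycles cohomologous to an $H$-valued one, and (ii) Proposition~\ref{coset contraction} reduces any algebraic representation to a coset representation with image actually in $G/H \subseteq ({\bf G}/{\bf H})(k)$, not just in $({\bf G}/{\bf H})(k)$.

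For the direction ``cohomologous to a proper algebraic subgroup $\Rightarrow$ not $c$-ergodic'', I would start from a proper $k$-algebraic subgroup ${\bf H}<{\bf G}$ such that $c$ is cohomologous to a cocycle with values in $H={\bf H}(k)$. Proposition~\ref{proper-cohom} then produces a $c$-equivariant measurable map $\phi\colon Y\to G/H$. Composing with the natural inclusion $G/H\hookrightarrow ({\bf G}/{\bf H})(k)$ yields an algebraic representation of $Y$ on ${\bf V}={\bf G}/{\bf H}$. Since ${\bf H}$ is proper, the homogeneous variety ${\bf G}/{\bf H}$ admits no ${\bf G}$-fixed point (a fixed coset $g{\bf H}$ would force ${\bf G}=g{\bf H}g^{-1}={\bf H}$). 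The essential image of $\phi$ lies in the single ${\bf G}$-orbit $G/H$, hence cannot be a ${\bf G}$-fixed point, contradicting Definition~\ref{c-ergodic}.

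For the converse, suppose the $T$-action on $Y$ is not $c$-ergodic. Then there is some algebraic representation $({\bf V},\phi_{\bf V})$ whose essential image is not a single ${\bf G}$-fixed point. Apply Proposition~\ref{coset contraction} to obtain a coset algebraic representation $({\bf G}/{\bf H}, \phi_{{\bf G}/{\bf H}})$ together with a $k$-${\bf G}$-morphism $i\colon {\bf G}/{\bf H}\to {\bf V}$ such that $\phi_{\bf V}=i\circ\phi_{{\bf G}/{\bf H}}$, and, crucially, $\phi_{{\bf G}/{\bf H}}$ factors through $G/H$ by the explicit construction (via Proposition~\ref{coset contraction-p}) in its proof. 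If ${\bf H}={\bf G}$, then ${\bf G}/{\bf H}$ is a single ${\bf G}$-fixed point, so $\phi_{\bf V}=i\circ \phi_{{\bf G}/{\bf H}}$ would be essentially constant with value a ${\bf G}$-fixed point of ${\bf V}$, contradicting our assumption on ${\bf V}$. Hence ${\bf H}$ is a proper $k$-algebraic subgroup of ${\bf G}$, and Proposition~\ref{proper-cohom} applied to $\phi_{{\bf G}/{\bf H}}\colon Y\to G/H$ shows that $c$ is cohomologous to a cocycle taking values in $H={\bf H}(k)$, a proper algebraic subgroup of $G$.

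There is no real obstacle; the only point requiring a small amount of care is to remember that Proposition~\ref{coset contraction} delivers a representation map landing in $G/H$ (not merely in $({\bf G}/{\bf H})(k)$), which is exactly the hypothesis needed to invoke Proposition~\ref{proper-cohom} in the second half. Everything else is a matter of unfolding Definition~\ref{c-ergodic} and Theorem~\ref{a-initial}.
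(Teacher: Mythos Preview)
Your proof is correct and follows the same approach as the paper: both directions rest on Proposition~\ref{proper-cohom}, which is in fact all the paper cites. You have correctly supplied the connective tissue that the paper leaves implicit---the use of Proposition~\ref{coset contraction} to ensure the coset representation lands in $G/H$ rather than merely in $({\bf G}/{\bf H})(k)$, and the observation that ${\bf G}/{\bf H}$ has no ${\bf G}$-fixed point when ${\bf H}$ is proper.
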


\begin{proof}
By Proposition~\ref{proper-cohom}.
\end{proof}

%

\begin{prop} \label{mixing}
Assume that
$Y^j$ is $T$-ergodic for every $j\in \bbN$
and that $c$ is independent of $Y$, that is
there exists
a homomorphism $d:T\to G$
such that for every $t\in T$ and a.e.\ $y\in Y$, $c(t,y)=d(t)$.
Assume further that
$\overline{d(T)}^Z={\bf G}$.
Then the action of $T$ on $Y$ is $c$-ergodic.
\end{prop}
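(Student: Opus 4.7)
By Theorem~\ref{a-initial} and Definition~\ref{c-ergodic} it suffices to show that no proper $k$-algebraic subgroup ${\bf H}<{\bf G}$ admits a coset algebraic representation $\phi:Y\to({\bf G}/{\bf H})(k)$; I will derive a contradiction from the existence of such a $\phi$ for a proper ${\bf H}$.

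The first step is to show that the Zariski closure $V\subseteq {\bf G}/{\bf H}$ of the essential image of $\phi$ equals all of ${\bf G}/{\bf H}$. The equivariance $\phi(ty)=d(t)\phi(y)$ together with measure-class preservation of the $T$-action on $Y$ shows that $V$ is $d(T)$-invariant. The set of elements of ${\bf G}$ preserving $V$ is Zariski closed and contains $d(T)$, hence it contains $\overline{d(T)}^Z={\bf G}$; transitivity of ${\bf G}$ on ${\bf G}/{\bf H}$ then forces $V={\bf G}/{\bf H}$.

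The heart of the argument uses the $j$-fold ergodicity of $Y$. For each $j\in\bbN$, the product map $\Phi_j:Y^j\to({\bf G}/{\bf H})^j(k)$ given by $(y_1,\dots,y_j)\mapsto(\phi(y_1),\dots,\phi(y_j))$ is $c$-equivariant for the diagonal ${\bf G}$-action, since the cocycle $c=d$ is independent of the base point. Applying Proposition~\ref{coset contraction-p} to the ergodic $T$-space $Y^j$ produces a single ${\bf G}$-orbit $\mathcal{O}_j\subseteq({\bf G}/{\bf H})^j$ containing the essential image of $\Phi_j$, and $\mathcal{O}_j$ is a locally closed $k$-subvariety of the form ${\bf G}/{\bf H}_j$ by Proposition~\ref{polishing}. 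A Fubini argument on the product pushforward measure, combined with the first step, shows that the Zariski closure of the essential image of $\Phi_j$ is all of $({\bf G}/{\bf H})^j$; therefore $\overline{\mathcal{O}_j}^Z = ({\bf G}/{\bf H})^j$. Since $\mathcal{O}_j$ is locally closed, it must be Zariski open of dimension $j(\dim{\bf G}-\dim{\bf H})$, while $\mathcal{O}_j\cong {\bf G}/{\bf H}_j$ yields $\dim\mathcal{O}_j\leq \dim{\bf G}$, whence $j(\dim{\bf G}-\dim{\bf H})\leq \dim{\bf G}$ for every $j\in\bbN$.

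If $\dim{\bf H}<\dim{\bf G}$ this inequality fails for $j$ sufficiently large, yielding the contradiction. The residual case $\dim{\bf H}=\dim{\bf G}$ with ${\bf H}\neq{\bf G}$ renders ${\bf G}/{\bf H}$ a finite variety; Zariski density then gives $\phi(Y)=({\bf G}/{\bf H})(\bar k)$, hence $(\phi\times\phi)(Y^2)=({\bf G}/{\bf H})^2$, and invoking the single-orbit conclusion at $j=2$ forces ${\bf G}$ to act transitively on $({\bf G}/{\bf H})^2$. Since diagonal ${\bf G}$-orbits on this product are parametrized by ${\bf H}\bs{\bf G}/{\bf H}$, transitivity collapses that double coset space to a point and so ${\bf H}={\bf G}$, contradicting properness. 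I expect the main care will be in treating Zariski closures and locally closed orbits consistently between $k$-points and $\bar k$-points; everything else is routine.
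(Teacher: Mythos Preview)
Your argument is correct and follows essentially the same route as the paper's proof: pass to a coset representation, push the map to the $j$-fold product, use ergodicity of $Y^j$ to land in a single diagonal ${\bf G}$-orbit, observe that Zariski density of $d(T)$ forces this orbit to be Zariski-dense (hence open) in $({\bf G}/{\bf H})^j$, and compare dimensions. The only cosmetic differences are that the paper obtains Zariski density in $({\bf G}/{\bf H})^j$ via invariance under the full product group $d(T)^j$ rather than your Fubini reduction to $j=1$, and in the zero-dimensional case the paper shows directly that $\phi$ is essentially constant (using the diagonal indicator on $Y^2$) rather than arguing, as you do, that transitivity on $({\bf G}/{\bf H})^2$ forces ${\bf H}={\bf G}$.
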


\begin{proof}
We let ${\bf H}<{\bf G}$, $i:{\bf G}/{\bf H}\to {\bf V}$ and $\phi_{ {\bf G}/{\bf H}}:Y\to {\bf G}/{\bf H}(k)$
be as guaranteed by Proposition~\ref{coset contraction}.

We claim that $\dim({\bf G}/{\bf H})=0$.
We will show this arguing by contradiction.
Assume $\dim({\bf G}/{\bf H})>0$.
Then there exists $j\in \bbN$ for which $\dim(({\bf G}/{\bf H})^j)>\dim({\bf G})$.
We fix such $j$.
We consider the map $\phi_{ {\bf G}/{\bf H}}^j:Y^j\to ({\bf G}/{\bf H})^j(k)$
and the measure $\nu=(\phi_{ {\bf G}/{\bf H}}^j)_*\mu^j$ on $({\bf G}/{\bf H})^j(k)$.
Its support is an $d(T)^j$-invariant set.
But $\overline{d(T)^j}^Z={\bf G}^j$ thus $\overline{\supp(\nu)}^Z=({\bf G}/{\bf H})^j$.
On the other hand, $\nu$ is ergodic with respect to the diagonal subgroup in $d(T)^j$, thus, again by Proposition~\ref{coset contraction}, it is
supported on a single orbit of the diagonal group in ${\bf G}^j$.
This orbit must then be Zariski-dense, hence open (as it is locally closed), thus of the same dimension as $({\bf G}/{\bf H})^j$
which is, by the choice of $j$, bigger than $\dim({\bf G})$.
This is an absurd, thus indeed  $\dim({\bf G}/{\bf H})=0$.

It follows that $\phi_{\bf V}$ has a finite image in $V$.
$\phi_{\bf V}$ is than essentially constant,
otherwise, denoting by $\Delta$ the diagonal in $V^2$, $\chi_\Delta\circ \phi_{\bf V}$ gives
a non-constant $T$-invariant function on $Y^2$, contradicting its ergodicity.
The essential image of $\phi_{\bf V}$ is $d(T)$-fixed, hence also  ${\bf G}$-fixed, as ${\bf G}=\overline{d(T)}^Z$.
\end{proof}

\section{Yoneda lemma} \label{s-yoneda}

Throughout this section we fix $T,Y,k,{\bf G}$ and $c$ as defined and discussed in \S\ref{alg-rep}.
A fancy way of thinking of Theorem~\ref{a-initial} is saying that a certain functor is representable by  a coset space.
Taking this formal view point we could take advantage of Yoneda lemma.
We will make this statement precise.
Consider the category $\mathcal{C}$ whose objects are $k$-${\bf G}$-varieties
and a morphism from ${\bf U}$ to ${\bf V}$
is given by a $k$-${\bf G}$-algebraic map $\psi:{\bf U}\to {\bf V}$.
It is easy to see that the application ${\bf V}\mapsto F({\bf V})=L^0(Y,{\bf V}(k))^T$ is functorial.
$F:\mathcal{C} \to {\bf Sets}$ is the functor associating with any object of $\mathcal{C}$ the set of algebraic representations of $Y$
targeting at that object.
The category of algebraic representations of $Y$ is clearly isomorphic with the category of elements of the functor $F$.
In particular, the former category has an initial object if and only if $F$ is representable,
and in this case, by Yoneda lemma, $\Aut(F)$ is isomorphic to the automorphism group in $\mathcal{C}$ of such an initial object.

If $T'$ is a group acting on $Y$ commuting with the $T$ action and the cocycle $c$ extends to a cocycle $T\times T'\times Y\to G$,
then the $T$ action on $L^0(Y,{\bf V}(k))$ extends to a $T\times T'$ action, and in particular $T'$ acts on the set $L^0(Y,{\bf V}(k))^T$.
It follows that $T'$ acts by automorphisms on the functor $F$.
This reasoning gives the homomorphism $d$ considered in the theorem below,
but we will prove it without forcing this language.

\begin{theorem} \label{yoneda}
Let ${\bf H}<{\bf G}$ be a $k$-subgroup and $\phi:Y\to {\bf G}/{\bf H}(k)$ be a $c$-equivariant map
which is
an initial object in the category of algebraic representations of $Y$.
Let $T'$ be a group acting on the Lebesgue space $Y$ commuting with the $T$-action and assume
the cocycle $c$ extends to a cocycle $T\times T'\times Y\to G$.
Then there exists a group homomorphism $d:T'\to N_{\bf G}({\bf H})/{\bf H}(k)$
such that for every $t'\in T'$, a.e.\ $y\in Y$, $t'\phi(y)=\phi(y)d(t')$.
\end{theorem}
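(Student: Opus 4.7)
The plan is to make precise the representability perspective sketched at the start of this section. Initiality of $\phi:Y\to ({\bf G}/{\bf H})(k)$ among algebraic representations of $Y$ means that the functor $F:{\bf V}\mapsto L^0(Y,{\bf V}(k))^T$ is represented by ${\bf G}/{\bf H}$. By Yoneda, natural endomorphisms of $F$ correspond to ${\bf G}$-equivariant $k$-algebraic endomorphisms of ${\bf G}/{\bf H}$, which are in turn classified by $(N_{\bf G}({\bf H})/{\bf H})(k)$ acting by right translation. The group $T'$ will act on $F$ via the cocycle-twisted $T'$-action, and $d(t')$ will be the element of $(N_{\bf G}({\bf H})/{\bf H})(k)$ corresponding to the image of $t'$.

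First I would verify that the extended cocycle gives rise to a $T'$-action on $L^0(Y,{\bf V}(k))$ for every $k$-${\bf G}$-variety ${\bf V}$, via the same formula as for $T$, namely $(t'\alpha)(y)=c(t',t'^{-1}y)\alpha(t'^{-1}y)$, and that this $T'$-action commutes with the $T$-action (using that $T$ and $T'$ commute on $Y$ and the cocycle identity inside $T\times T'$). Consequently $T'$ preserves $L^0(Y,{\bf V}(k))^T=F({\bf V})$. Specialising to ${\bf V}={\bf G}/{\bf H}$ with distinguished element $\phi$, this produces for each $t'\in T'$ a new algebraic representation $t'\phi:Y\to ({\bf G}/{\bf H})(k)$.

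Next, the initiality of $\phi$ yields a unique ${\bf G}$-equivariant $k$-morphism $\psi_{t'}:{\bf G}/{\bf H}\to{\bf G}/{\bf H}$ with $\psi_{t'}\circ\phi=t'\phi$. Tracking the image of the identity coset $\bar{e}\in ({\bf G}/{\bf H})(k)$ identifies the monoid of such endomorphisms with $(N_{\bf G}({\bf H})/{\bf H})(k)$: the image $\psi_{t'}(\bar{e})$ is a $k$-point of ${\bf G}/{\bf H}$ that must lift to $N_{\bf G}({\bf H})$ in order for the resulting right translation to be well-defined on ${\bf G}/{\bf H}$ and ${\bf G}$-equivariant, and hence determines a $k$-point $d(t')\in (N_{\bf G}({\bf H})/{\bf H})(k)$. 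The identity $\psi_{t'}\circ\phi=t'\phi$ reads $(t'\phi)(y)=\phi(y)\cdot d(t')$, which is the required formula.

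Finally, for the homomorphism property, note that right multiplication on ${\bf G}/{\bf H}$ by elements of $N_{\bf G}({\bf H})/{\bf H}$ commutes with left multiplication by ${\bf G}$, in particular with the cocycle coefficients $c(t',\cdot)$ appearing in the $T'$-action. Thus for $s',t'\in T'$ one computes $s'(t'\phi)=s'(\phi\cdot d(t'))=(s'\phi)\cdot d(t')=\phi\cdot d(s')d(t')$, while directly $(s't')\phi=\phi\cdot d(s't')$; faithfulness of the right action of $N_{\bf G}({\bf H})/{\bf H}$ on ${\bf G}/{\bf H}$ (equivalently, the uniqueness clause in initiality) forces $d(s't')=d(s')d(t')$. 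The only subtlety worth flagging is ensuring that the classification of ${\bf G}$-equivariant $k$-algebraic endomorphisms of ${\bf G}/{\bf H}$ lands in $(N_{\bf G}({\bf H})/{\bf H})(k)$ rather than in the a priori smaller $N_G(H)/H$; this is automatic because $\psi_{t'}$ is a $k$-morphism sending the $k$-rational base point to a $k$-rational point of the quotient variety. Beyond this the argument is pure bookkeeping of compatibilities between the extended cocycle and the commuting $T$- and $T'$-actions.
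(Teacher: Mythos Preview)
Your proposal is correct and follows essentially the same approach as the paper's proof: use that the $T'$-action commutes with the $T$-action to obtain for each $t'$ a new $T$-equivariant map (you use $t'\phi$, the paper uses $(t')^{-1}\phi$), invoke initiality to produce a unique ${\bf G}$-equivariant $k$-endomorphism of ${\bf G}/{\bf H}$, identify the latter with an element of $N_{\bf G}({\bf H})/{\bf H}(k)$ (the paper outsources this to Proposition~\ref{aut-identification}, you re-derive it by tracking the identity coset), and deduce the homomorphism property from uniqueness. Your treatment is somewhat more explicit in spelling out the commutativity of the $T$- and $T'$-actions and the verification that $d$ is multiplicative, but there is no substantive difference in strategy.
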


Before proving the theorem, let us state without a proof the following proposition
which
provides an identification of $\Aut_{\mathcal{C}}({\bf G}/{\bf H})$ that we keep using throughout the paper.
The proposition is well known and easy to prove.

\begin{prop} \label{aut-identification}
Fix a $k$-subgroup ${\bf H}<{\bf G}$ and denote ${\bf N}=N_{\bf G}({\bf H})$.
This is again a $k$-subgroup.
Any element $n\in {\bf N}$ gives a ${\bf G}$-automorphism of ${\bf G}/{\bf H}$ by
$g{\bf H}\mapsto gn^{-1}{\bf H}$.
The homomorphism $ {\bf N} \to \Aut_{\bf G}({\bf G}/{\bf H})$ thus obtained is onto and its kernel is ${\bf H}$.
Under the obtained identification ${\bf N}/{\bf H} \simeq \Aut_{\bf G}({\bf G}/{\bf H})$,
the $k$-points of the $k$-group ${\bf N}/{\bf H}$ are identified with the $k$-${\bf G}$-automorphisms of ${\bf G}/{\bf H}$,
that is ${\bf N}/{\bf H}(k)\simeq \Aut_\mathcal{C}({\bf G}/{\bf H})$.
\end{prop}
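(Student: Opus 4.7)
The plan is to apply the universal property of the initial object $({\bf G}/{\bf H},\phi)$ to the family of algebraic representations produced by translating $\phi$ by elements of $T'$, and then to identify the resulting group of self-morphisms with $({\bf N}/{\bf H})(k)$ via Proposition~\ref{aut-identification}.

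First I would set up the $T'$-action on the space of $c$-equivariant maps, as alluded to in the discussion preceding the theorem. Because the cocycle $c$ extends to $T\times T'\times Y\to G$, the formula $(t'\alpha)(y)=c(t',(t')^{-1}y)\alpha((t')^{-1}y)$ defines a measurable action of $T'$ on $L^0(Y,{\bf V}(k))$ for every $k$-${\bf G}$-variety ${\bf V}$. Since $T$ and $T'$ commute and the extended cocycle encodes this in the usual way, this action preserves the $T$-fixed subset $L^0(Y,{\bf V}(k))^T$. In particular, for every $t'\in T'$, the translate $t'\phi\in L^0(Y,{\bf G}/{\bf H}(k))^T$ is again a $c$-equivariant map, hence the pair $({\bf G}/{\bf H},\,t'\phi)$ is itself an algebraic representation of $Y$.

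Next I would invoke the initial object property. For each $t'\in T'$ there exists a unique $k$-${\bf G}$-algebraic morphism $\psi_{t'}\colon {\bf G}/{\bf H}\to {\bf G}/{\bf H}$ with $t'\phi=\psi_{t'}\circ\phi$ a.e. Uniqueness, applied to $t'=e$ and to products $t_1't_2'$, forces $\psi_e=\id$ and $\psi_{t_1't_2'}=\psi_{t_1'}\circ\psi_{t_2'}$ (both composites fit the defining equation for $t_1't_2'\phi$). Consequently each $\psi_{t'}$ is invertible, hence lies in $\Aut_{\mathcal{C}}({\bf G}/{\bf H})$, and $t'\mapsto \psi_{t'}$ is a group homomorphism $T'\to \Aut_{\mathcal{C}}({\bf G}/{\bf H})$.

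Finally I would use Proposition~\ref{aut-identification} to identify $\Aut_{\mathcal{C}}({\bf G}/{\bf H})$ with $({\bf N}/{\bf H})(k)$, where ${\bf N}=N_{\bf G}({\bf H})$, and define $d\colon T'\to ({\bf N}/{\bf H})(k)$ as the composition of $t'\mapsto\psi_{t'}$ with this identification, choosing the inversion convention so that the automorphism associated with $n{\bf H}\in{\bf N}/{\bf H}$ sends $g{\bf H}$ to $gn^{-1}{\bf H}$ (which matches the right-multiplication notation $\phi(y)d(t')$). With this convention, the equation $t'\phi=\psi_{t'}\circ\phi$ reads a.e.\ as $t'\phi(y)=\phi(y)d(t')$. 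The only subtle point is fixing this side/inverse convention correctly so that $d$ is a genuine homomorphism (as opposed to an anti-homomorphism) and the equality holds as stated; once conventions are aligned, everything else is a direct consequence of the universal property and Proposition~\ref{aut-identification}.
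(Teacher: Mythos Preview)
Your proof is not a proof of Proposition~\ref{aut-identification} at all; it is a proof of Theorem~\ref{yoneda}. The statement you were asked to establish is the purely algebraic identification ${\bf N}/{\bf H}\simeq \Aut_{\bf G}({\bf G}/{\bf H})$ and its $k$-rational refinement ${\bf N}/{\bf H}(k)\simeq \Aut_{\mathcal C}({\bf G}/{\bf H})$. There is no $T$, no $T'$, no Lebesgue space $Y$, no cocycle, and no initial object in this proposition; indeed, you yourself invoke Proposition~\ref{aut-identification} in your final paragraph, which makes your argument circular if read as a proof of it.

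For the record, the paper does not prove Proposition~\ref{aut-identification} either: it is explicitly stated ``without a proof'' as a well-known and easy fact. A direct argument would go as follows. The map $n\mapsto (g{\bf H}\mapsto gn^{-1}{\bf H})$ is well defined precisely because $n$ normalizes ${\bf H}$, is visibly a homomorphism into the ${\bf G}$-equivariant automorphisms of ${\bf G}/{\bf H}$, and has kernel ${\bf H}$. For surjectivity, any ${\bf G}$-equivariant $\psi$ is determined by $\psi(e{\bf H})=n^{-1}{\bf H}$; equivariance under ${\bf H}$ forces $hn^{-1}{\bf H}=n^{-1}{\bf H}$ for all $h\in{\bf H}$, i.e.\ $n\in{\bf N}$, and then $\psi(g{\bf H})=gn^{-1}{\bf H}$. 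The $k$-rational statement follows because every step is defined over $k$ and the orbit map ${\bf N}/{\bf H}\to{\bf G}/{\bf H}$ onto the ${\bf H}$-fixed locus is a $k$-isomorphism onto its image. Incidentally, what you wrote is essentially the paper's own proof of Theorem~\ref{yoneda}, so your instincts there are fine; you just attached it to the wrong label.
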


\begin{proof}[Proof of Theorem~\ref{yoneda}]
Let ${\bf H}<{\bf G}$ be a $k$-subgroup and $\phi:Y\to {\bf G}/{\bf H}(k)$ be a $c$-equivariant map
which is
an initial object in the category of algebraic representations of $Y$.
The space $L^0(Y ,{\bf G}/{\bf H}(k))^{T}$
is $T'$ invariant.
For $t'\in T'$ we consider the $T$-representation $(t')^{-1}\phi\in  L^0(Y ,{\bf G}/{\bf H}(k))^T$.
By the fact that $\phi$ forms an initial object we get the dashed vertical arrow, which we denote $d(t')$, in the following diagram.
\begin{equation} \label{a-diag-AG}
\xymatrix{ Y \ar[r]^{\phi} \ar[rd]_{(t')^{-1}\phi} & {\bf G}/{\bf H} \ar@{.>}[d]^{d(t')}  \\
		    & {\bf G}/{\bf H}  }
\end{equation}
By the uniqueness of the dashed arrow, the correspondence $t'\mapsto d(t')$ is easily checked to form a homomorphism from $T'$ to the group
of $k$-${\bf G}$-automorphism of $ {\bf G}/{\bf H}$,
which we identify with $N_{\bf G}({\bf H})/{\bf H}(k)$
using Proposition~\ref{aut-identification}.
By viewing $d(t')$ as an element of $N_{\bf G}({\bf H})/{\bf H}(k)$
we obtain for every $t'\in T'$, a.e.\ $y\in Y$, $t'\phi(y)=\phi(y)d(t')$.
\end{proof}

\begin{prop}[cf.\ {\cite[Proposition 3.8]{MS-SR}, \cite[Proposition 4.2]{FM}}] \label{prop:prod}
Let $T'$ be an lcsc group acting on $Y$ commuting with the $T$ action and assume the cocycle $c$ extends to a cocycle
$\tilde{c}:T\times T'\times Y\to G$.
Assume the algebraic group ${\bf G}$ is an adjoint form simple group.
Assume that the $(T\times T')$-action on $Y$ is $\tilde{c}$-ergodic, but the $T$-action on $Y$ is not $c$-ergodic (see Definition~\ref{c-ergodic}).
Then the cocycle $\tilde{c}$ is cohomologous to a homomorphism of the form $d\circ\pi_2$ for some continuous homomorphism $d:T'\to G$.
\end{prop}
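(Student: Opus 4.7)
The plan is to apply the Yoneda-type Theorem~\ref{yoneda} to the initial coset representation that exists by the failure of $c$-ergodicity, and then exploit the $\tilde c$-ergodicity of the bigger group $T\times T'$ to force this initial object to be ${\bf G}$ itself. By Proposition~\ref{c-ergodic-hull}, the hypothesis that the $T$-action on $Y$ is not $c$-ergodic guarantees that the initial algebraic representation of $(Y,c)$ produced by Theorem~\ref{a-initial} is a coset representation $\phi:Y\to({\bf G}/{\bf H})(k)$ with ${\bf H}\lneq{\bf G}$ a proper $k$-algebraic subgroup. Since $\tilde c$ extends $c$ and $T'$ commutes with $T$, Theorem~\ref{yoneda} yields a group homomorphism $d:T'\to({\bf N}/{\bf H})(k)$, where ${\bf N}:=N_{\bf G}({\bf H})$, together with an a.e.\ identity relating $\tilde c(t',y)\phi(y)$, $\phi(t'y)$ and the right action of $d(t')$ on $({\bf G}/{\bf H})(k)$ from Proposition~\ref{aut-identification}.

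The main step is to show that in fact ${\bf N}={\bf G}$. I would compose $\phi$ with the canonical ${\bf G}$-equivariant projection $p:{\bf G}/{\bf H}\to{\bf G}/{\bf N}$. The right action of ${\bf N}/{\bf H}$ on ${\bf G}/{\bf H}$ is induced by right multiplication of lifts to ${\bf N}$, and hence preserves every ${\bf N}$-coset, so the twist by $d(t')$ in the Yoneda identity is annihilated by $p$. Consequently $p\circ\phi:Y\to({\bf G}/{\bf N})(k)$ is a genuine algebraic representation of the full $(T\times T')$-action with cocycle $\tilde c$, and the $\tilde c$-ergodicity assumption forces it to be essentially constant at a ${\bf G}$-fixed point of ${\bf G}/{\bf N}$. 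This is possible only when ${\bf N}={\bf G}$, i.e.\ when ${\bf H}$ is normal in ${\bf G}$. The adjoint simplicity of ${\bf G}$ then forces ${\bf H}=\{e\}$, since the only normal $k$-algebraic subgroups of an adjoint simple group are $\{e\}$ and ${\bf G}$, and we have ${\bf H}\neq{\bf G}$.

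With ${\bf H}=\{e\}$ the map $\phi$ lands in $G$, the homomorphism $d$ lands in $G$, and the right action of Proposition~\ref{aut-identification} specializes to ordinary right multiplication (or its inverse). Combining the $c$-equivariance of $\phi$ along $T$ with the Yoneda identity for $T'$ through the cocycle law for $(t,t')\in T\times T'$ produces a single equation of the shape
$$\tilde c((t,t'),y)\,\phi(y)=\phi((t,t')y)\cdot d(t')$$
for a.e.\ $y$, which exhibits $\phi$ as a transfer function witnessing that $\tilde c$ is cohomologous to $d\circ\pi_2$ (after, if necessary, replacing $d$ by $t'\mapsto d(t')^{-1}$ to match sign conventions). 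Continuity of $d$ is then automatic: solving the displayed identity for $d(t')$ realizes it as a measurable function of $(t',y)$, so Fubini provides a $y$ for which $t'\mapsto d(t')$ is measurable, and a measurable homomorphism between lcsc groups is continuous. The crux of the argument, and the main obstacle, is the descent step: one must verify carefully that the ${\bf N}/{\bf H}$-twist on ${\bf G}/{\bf H}$ evaporates upon projecting to ${\bf G}/{\bf N}$, so that the strong $\tilde c$-ergodicity hypothesis can be brought to bear and collapse the initial object.
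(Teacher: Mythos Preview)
Your proposal is correct and follows essentially the same approach as the paper's proof: take the initial coset representation ${\bf G}/{\bf H}$ with ${\bf H}\lneq{\bf G}$, invoke Theorem~\ref{yoneda} to obtain $d:T'\to N_{\bf G}({\bf H})/{\bf H}(k)$, project to ${\bf G}/{\bf N}$ to kill the $d$-twist and obtain a $(T\times T')$-equivariant map, use $\tilde c$-ergodicity to force ${\bf N}={\bf G}$ and hence ${\bf H}=\{e\}$ by simplicity, and then verify the cohomology equation and continuity of $d$. The only cosmetic difference is that the paper phrases the normalizer step as a proof by contradiction (assume ${\bf H}\neq\{e\}$, deduce ${\bf N}\lneq{\bf G}$, contradict $\tilde c$-ergodicity), whereas you argue directly that ${\bf N}={\bf G}$ and then conclude ${\bf H}=\{e\}$; these are logically the same. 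Your citation of Proposition~\ref{c-ergodic-hull} for the properness of ${\bf H}$ is slightly indirect---Definition~\ref{c-ergodic} gives this immediately---and the hedge about replacing $d$ by $d^{-1}$ is unnecessary, since the conventions in Theorem~\ref{yoneda} already produce the identity $\tilde c((t,t'),y)\phi(y)=\phi((t,t')y)d(t')$ on the nose.
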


\begin{proof}
Let ${\bf H}<{\bf G}$ be a $k$-subgroup and $\phi:Y\to {\bf G}/{\bf H}(k)$ be a $c$-equivariant map
which is
an initial object in the category of algebraic representations of the $T$-Lebesgue space $Y$,
as guaranteed by Theorem~\ref{a-initial}.
By Theorem~\ref{yoneda} we get a homomorphism $d:T'\to N_{\bf G}({\bf H})/{\bf H}(k)$ satisfying for every $t'\in T'$, a.e.\ $y\in Y$, $t'\phi(y)=\phi(y)d(t')$.

We claim that ${\bf H}=\{e\}$. Assume not.
By the assumptions that the $T$-action on $Y$ is not $c$-ergodic, ${\bf H}\lneq {\bf G}$.
If also ${\bf H}\neq\{e\}$, by the simplicity of ${\bf G}$, we conclude that ${\bf N}=N_{\bf G}({\bf H})\lneq {\bf G}$.
We let $\psi$ be the post-composition of $\phi$ with the natural $k$-${\bf G}$-map ${\bf G}/{\bf H}\to {\bf G}/{\bf N}$.
Since ${\bf G}/{\bf H}\to {\bf G}/{\bf N}$ is $ N_{\bf G}({\bf H})/{\bf H}(k)$-invariant, we get that $\psi$ is $T'$-invariant.
Thus $\psi\in L^0(Y, {\bf G}/{\bf N}(k))^{T\times T'}$.
This contradicts the assumption that the $(T\times T')$-action on $Y$ is $\tilde{c}$-ergodic.
Thus ${\bf H}=\{e\}$.

We now claim that the map $\phi:Y\to G$ conjugates $\tilde{c}$ to $d\circ \pi_2$.
Indeed, for $(t,t')\in T\times T'$, for a.e\ $y\in Y$ we have
\[ \tilde{c}(tt',y)\phi(y)=
\tilde{c}(t',ty)\tilde{c}(t,y)\phi(y)=
\tilde{c}(t',ty)\phi(ty)=
\]\[
\tilde{c}(t',t'^{-1}(tt')y)\phi(t'^{-1}(tt')y)=
t'\phi(tt'y)=
\phi(tt'y)d(t'). \]
Finally, by the fact that $\tilde{c}$ and $\phi$ are measurable, we note that $d$ is measurable, hence continuous.
\end{proof}

\section{Algebraic representations and amenable actions} \label{s:amen}

In this section we continue the study of algebraic representations of ergodic actions.
Our basic tool is the following theorem which is proven together with Bruno Duchesne and Jean L\'{e}cureux in \cite{BDL}.

\begin{theorem} \label{BDL}
Let $R$ be a locally compact group and $Y$ an ergodic, amenable $R$ Lebesgue space.
Let $(k,|\cdot|)$ be a field endowed with an absolute value.
Assume that as a metric space $k$ is complete and separable.
Let ${\bf G}$ be an adjoint form simple $k$-algebraic group.
Let $f:R\times Y \to {\bf G}(k)$ be a measurable cocycle.

Then either
there exists
a $k$-algebraic subgroup ${\bf H}\lneq {\bf G}$
and an $f$-equivariant measurable map $\phi:Y\to {\bf G}/{\bf H}(k)$,
or there exists
a complete and separable metric space $V$ on which $G$ acts by isometries
with bounded stabilizers
and an $f$-equivariant measurable map $\phi':Y\to V$.

Furthermore,
in case the image $|\cdot|:k\to [0,\infty)$ is closed,
the space $V$ could be chosen such that the quotient topology on $V/G$ is Hausdorff,
in case $k$ is a local field the $G$-action on $V$ is proper
and in case $k=\bbR$ and $G$ is non-compact the first alternative always occurs.
\end{theorem}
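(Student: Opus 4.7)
The plan is to exploit amenability of the $R$-action on $Y$ to build an $f$-equivariant map from $Y$ into the space of probability measures on a compact $G$-space constructed from ${\bf G}/{\bf P}$, for ${\bf P}<{\bf G}$ a minimal $k$-parabolic, and then run a dichotomy according to whether the Zariski support of these measures is proper or exhausts ${\bf G}/{\bf P}$.

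First I would fix ${\bf P}<{\bf G}$ a minimal $k$-parabolic and a compact metrizable $G$-space $Z$ containing $({\bf G}/{\bf P})(k)$ as a $G$-invariant Borel subset (when $k$ is local, $({\bf G}/{\bf P})(k)$ is already compact and may be taken as $Z$). Then $\Prob(Z)$ is a convex compact Polish $G$-space, so amenability of the $R$-action on $Y$ applied to the $R$-bundle $Y\times \Prob(Z)\to Y$ twisted by $f$ yields an $f$-equivariant measurable map $\nu:Y\to \Prob(Z)$. To each $\nu(y)$ I would assign the minimal $k$-subvariety ${\bf W}_y\subset {\bf G}/{\bf P}$ whose $k$-points essentially contain $\supp(\nu(y))\cap ({\bf G}/{\bf P})(k)$; the assignment $y\mapsto{\bf W}_y$ is an $f$-equivariant measurable map into the scheme of $k$-subvarieties of ${\bf G}/{\bf P}$, whose $G$-orbits are coset varieties of the form ${\bf G}/{\bf H}$. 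Applying Proposition~\ref{coset contraction-p} one may assume $y\mapsto{\bf W}_y$ lands in a single orbit $({\bf G}/{\bf H})(k)$; if ${\bf H}\lneq{\bf G}$ this yields the first alternative.

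Otherwise $\nu(y)$ is generically Zariski-dense in ${\bf G}/{\bf P}$, and to realize the second alternative I would work inside the $G$-invariant set $\Prob(Z)^{\mathrm{gen}}\subset \Prob(Z)$ of Zariski-dense probability measures, equipped with a $G$-invariant complete separable metric (for instance a L\'evy-Prokhorov metric inherited from $Z$). The main obstacle, and the step that genuinely resides in \cite{BDL}, is a Furstenberg-Zimmer type contraction lemma valid over arbitrary complete valued fields: the $G$-stabilizer of any Zariski-dense probability measure on $({\bf G}/{\bf P})(k)$ is bounded. Establishing this uniformly in $k$ requires a Bruhat-Tits or CAT(0) geometric substitute for the usual archimedean averaging, which is precisely the joint input with Duchesne and L\'ecureux. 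Granted this lemma, $V=\Prob(Z)^{\mathrm{gen}}$ realizes the second alternative. The refinements then follow: closedness of $|k^\times|$ makes Zariski-dense $G$-orbits closed in $\Prob(Z)^{\mathrm{gen}}$, so that $V/G$ can be arranged Hausdorff; for $k$ local, compactness of $({\bf G}/{\bf P})(k)$ together with bounded stabilizers upgrades to properness of the $G$-action on $V$; and for $k=\bbR$ with ${\bf G}(\bbR)$ non-compact, the classical real Furstenberg-Zimmer reduction forces any bounded-stabilizer $f$-equivariant map to further degenerate into an algebraic coset map, so that the first alternative always applies.
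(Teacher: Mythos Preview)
The paper does not contain a proof of this theorem: it is stated in \S\ref{s:amen} as a result ``proven together with Bruno Duchesne and Jean L\'{e}cureux in \cite{BDL}'' and used as a black box. The subsequent Remark only records that the local-field, characteristic-zero case is classical (Zimmer) and explains the special feature of $k=\bbR$. So there is no in-paper argument to compare your outline against.

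That said, your outline is the correct architecture and is in the spirit of the Furstenberg--Zimmer approach the Remark alludes to. Two points deserve caution. First, the sentence ``fix a compact metrizable $G$-space $Z$ containing $({\bf G}/{\bf P})(k)$ as a $G$-invariant Borel subset'' hides the main difficulty in the non-local case: for a general complete valued field $({\bf G}/{\bf P})(k)$ is not compact, and there is no obvious compact $G$-space into which to embed it so that the amenability/fixed-point argument applies; producing the right convex compact $G$-bundle here is exactly part of what \cite{BDL} supplies (via CAT(0)/building geometry rather than a naive compactification). Second, the L\'evy--Prokhorov metric on $\Prob(Z)$ is $G$-invariant only if $G$ already acts isometrically on $Z$, which again is not automatic for arbitrary $k$; the $G$-invariant complete separable metric on the target $V$ has to be built from a $G$-invariant metric on an auxiliary space, not inherited for free. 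Your identification of the key analytic input---bounded stabilizers for Zariski-dense measures over arbitrary complete valued fields---is on target, and your explanations of the three ``Furthermore'' refinements match the content of the paper's Remark.
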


Note that the notion of a proper action is defined and discussed in \S\ref{proper prelim}.

\begin{remark}
\begin{itemize}
\item
For a local field of 0 characteristic
the above result is well known and follows from
\cite[Corollaries 3.2.17 and 3.2.19]{zimmer-book}.
\item
The properness of the action in the local field case is automatic,
see \cite{BG}.
\item
The special feature of the reals that plays a part here is that compact subgroups of algebraic real groups are always algebraic subgroups.
\end{itemize}
\end{remark}

\begin{theorem} \label{nontrivial}
Let $T$ be a lcsc group and $X$ be a $T$-Lebesgue space.
Assume that the $T$ action on $X$ is probability measure preserving.
Let $(k,|\cdot|)$ be a field endowed with an absolute value.
Assume that as a metric space $k$ is complete and separable.
Let ${\bf G}$ be an adjoint form simple $k$-algebraic group and denote $G={\bf G}(k)$.
Let $c:T\times X \to G$ be a measurable cocycle.
Assume $c$ is not cohomologous to a cocycle taking values in a bounded subgroup of $G$.

Let $B$ be an amenable and metrically ergodic $T$-Lebesgue space
and let $f:T\times B\times X\to G$ be defined by $f(t,b,x)=c(t,x)$.
Assume either that $X$ is $T$-transitive or that  the image of $|\cdot|:k\to [0,\infty)$ is closed.
Then the $T$-action on $B\times X$ is not $f$-ergodic (see Definition~\ref{c-ergodic}).
\end{theorem}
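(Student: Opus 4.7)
The plan is to apply Theorem~\ref{BDL} to the $T$-action on $B\times X$ with the cocycle $f$, then rule out the metric alternative using the hypothesis.

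I would first verify the hypotheses of Theorem~\ref{BDL} for $(T,B\times X,f)$. Ergodicity of $B\times X$ follows from Lemma~\ref{me-e}, using the (implicit) $T$-ergodicity of $X$ and the $T$-metric ergodicity of $B$. Amenability of $B\times X$ is inherited from the $T$-amenability of $B$ together with the probability-preserving nature of $X$ (averaging over the pmp factor transfers invariant sections). Theorem~\ref{BDL} then yields two alternatives. In the algebraic alternative we obtain a proper $k$-subgroup ${\bf H}\lneq {\bf G}$ and an $f$-equivariant map $\phi:B\times X\to {\bf G}/{\bf H}(k)$. Since ${\bf G}$ is simple adjoint and ${\bf H}$ is proper, ${\bf G}/{\bf H}$ has no ${\bf G}$-fixed point (else ${\bf H}={\bf G}$), so $\phi$ directly witnesses non-$f$-ergodicity of the $T$-action on $B\times X$, as desired.

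The remainder of the proof rules out the metric alternative -- namely, a complete separable metric $G$-space $V$ with bounded stabilizers and an $f$-equivariant map $\phi':B\times X\to V$ -- by contradicting the hypothesis on $c$. Under either hypothesis on $(X,k)$, one can arrange the $G$-orbits on $V$ to be locally closed: in the closed-image case this is immediate from the ``furthermore'' clause of Theorem~\ref{BDL}, which makes $V/G$ Hausdorff; in the transitive case, writing $X=T/\Gamma$ and using the correspondence between $f$-equivariant maps $B\times X\to V$ and $\delta$-equivariant maps $B\to V$ (where $\delta:\Gamma\to G$ is the homomorphism associated with $c$), one applies a parallel analysis to the $\Gamma$-action on $B$. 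With locally closed orbits, the composition $B\times X\to V\to V/G$ is $T$-invariant and hence essentially constant on the ergodic space $B\times X$, so $\phi'$ lands essentially in a single orbit $Gv_0$. Proposition~\ref{coset contraction-p} then factors $\phi'$ through an $f$-equivariant map $\phi'':B\times X\to G/H$, where $H=\Stab_G(v_0)$ is a bounded subgroup.

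The final and most delicate step is to descend $\phi''$ to a $c$-equivariant map $X\to G/H$. I associate to $\phi''$ the map $\tilde\phi'':B\to L^0(X,G/H)$ defined by $\tilde\phi''(b)(x)=\phi''(b,x)$; this is $T$-equivariant for the $c$-twisted $T$-action on $L^0(X,G/H)$. Equipping this space with the integrated bounded metric $D(\psi_1,\psi_2)=\int_X \min\bigl(1,d_{G/H}(\psi_1(x),\psi_2(x))\bigr)\,d\mu_X$ derived from the $G$-invariant metric on $G/H\cong Gv_0\subseteq V$, it becomes a Polish space, and crucially the $c$-twisted $T$-action is by isometries -- a calculation relying essentially on the $T$-invariance of the probability measure $\mu_X$. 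By the $T$-metric ergodicity of $B$, the map $\tilde\phi''$ is essentially constant, and its value is precisely a $c$-equivariant map $X\to G/H$. Proposition~\ref{proper-cohom} then yields that $c$ is cohomologous to a cocycle in the bounded subgroup $H$, contradicting the hypothesis. The main obstacle is this descent step: verifying that the $c$-twisted $T$-action on $L^0(X,G/H)$ is genuinely isometric, which depends decisively on $X$ being probability-measure-preserving, and adapting the argument to the transitive case where the $V/G$ Hausdorffness is unavailable.
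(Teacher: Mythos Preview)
Your plan is essentially the paper's: apply Theorem~\ref{BDL} to $(T,B\times X,f)$, dispose of the algebraic alternative as witnessing non-$f$-ergodicity, and in the metric alternative descend along $B$ using metric ergodicity to produce a $c$-equivariant map $X\to G/K$ with $K$ bounded, contradicting the hypothesis. Your verification of amenability of $B\times X$ is correct (and in fact fills in something the paper leaves implicit), and your descent argument via the integrated truncated metric on $L^0(X,\,\cdot\,)$ is exactly the paper's.

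The one point where you diverge, and where there is a genuine gap, is the order of operations and its effect on the transitive case. You first try to push $\phi':B\times X\to V$ into a single $G$-orbit and only then descend; the paper does the reverse. Descending first is what makes the transitive case clean: one applies the metric-ergodicity argument with target $L^0(X,V)$ (not $L^0(X,G/H)$) to obtain a $c$-equivariant $\psi:X\to V$, and then, when $X$ is $T$-transitive, the essential image of $\psi$ is automatically contained in a single $G$-orbit (for a generic $x_0$ and a.e.\ $t$, $\psi(tx_0)=c(t,x_0)\psi(x_0)\in G\psi(x_0)$), with no need for $V/G$ to be Hausdorff or for orbits to be locally closed. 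Your alternative route in the transitive case---passing to a $\delta$-equivariant map $B\to V$ and running a ``parallel analysis to the $\Gamma$-action on $B$''---is left unspecified and does not obviously yield a single orbit: the restricted $\Gamma$-action on $B$ need not be ergodic, and you have no Hausdorff quotient to invoke. Swapping the order of the two steps fixes this with no extra work.
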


\begin{proof}
We will prove the theorem by contradiction.
Assuming
that the $T$-action on $B\times X$ is $f$-ergodic, we will show that the cocycle $c$
is cohomologous to a cocycle taking values in a bounded subgroup of $G$.

Note that by Lemma~\ref{me-e}, $B\times X$ is $T$-ergodic.
By taking $Y=B\times X$ and $R=T$ in Theorem~\ref{BDL},
there exists
a complete and separable metric space $V$ on which $G$ acts by isometries
with bounded stabilizers
and an $f$-equivariant measurable map $\phi:Y\to V$.
Denoting the $G$-invariant metric on $V$ by $d$, we define a metric $D$ on $L^0(X,V)$ by setting for $\alpha,\beta\in L^0(X,V)$,
\begin{equation} \label{metric}
D(\alpha,\beta)=\int_X \min\{d(\alpha(x),\beta(x)),1\}.
\end{equation}
It is easy to check that $D$ is a $T$-invariant metric and that the map $\Phi:B\to L^0(X,V)$ defined by $\Phi(b)(x)=\phi(b,x)$
(using Fubini theorem)
is $T$-equivariant.
By the metric ergodicity of $B$ we conclude that $\Phi$ is essentially constant, $\Phi(B)=\{\psi\}$
for some $\psi\in L^0(X,V)$.
It is easy to check that $\psi \in L^0(X,V)^T$.
If $X$ is $T$-transitive then the image of $\psi$ is $G$-transitive.
Also, if the image $|\cdot|:k\to [0,\infty)$ is closed,
by the fact that $V/G$ is Hausdorff, using Proposition~\ref{coset contraction},
we conclude that the image of $\psi$ is essentially transitive.
In both cases we may view $\psi$ as a function taking values in $G/K$, where $K$ is a stabilizer of a point in $V$, hence bounded.
That is, $\psi \in L^0(X,G/K)^T$.
By Proposition~\ref{proper-cohom}, $c$ is cohomologous to a cocycle taking values in $K$ which is bounded in of $G$.
This contradicts our assumptions on $c$.
\end{proof}

\section{Super-rigidity for products} \label{s:products}

For $S=S_1\times S_2\times \cdots\times S_n$, a product of groups,
and for $i\leq n$, we denote by $\pi_i:S\to S_i$ the natural factor map.
We use the notation $\hat{S}_i=\Ker(\pi_i)$, thus $S\simeq \hat{S}_i\times S_i$.
An $S$-Lebesgue space $X$ is called irreducible is for each $i\leq n$,
$X$ is $\hat{S}_i$-ergodic.
A subgroup $\Gamma<S$ is called irreducible if $S/\Gamma$ is irreducible as an $S$-Lebesgue space.
Equivalently, $\Gamma$ is irreducible if $S_i=\overline{\pi_i(\Gamma)}$ for each $i$.
The following theorem generalizes and improves \cite[Theorem~C]{FM}.

\begin{theorem}[Cocycle Super-Rigidity for products] \label{prod-cocycle}
Let $S=S_1\times S_2\times \cdots\times S_n$ be a product of lcsc groups,
and let $X$ be an irreducible pmp $S$-Lebesgue space.

Let $k$ be a local field.
Let $G$ be the $k$-points of an adjoint form simple algebraic group defined over $k$.
Let $c:S\times X \to G$ be a measurable cocycle.
Assume that $c$ is not cohomologous to a cocycle taking values in a proper algebraic subgroup or a bounded subgroup of $G$.
Then there exists $i \leq n$ and a continuous homomorphism $d:S_i\to G$
such that $c$ is cohomologous to the homomorphism $d\circ\pi_i$.
\end{theorem}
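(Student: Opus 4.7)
The plan is to reduce the theorem to Proposition~\ref{prop:prod}. Since $c$ is not cohomologous to a proper-algebraic-subgroup cocycle, Proposition~\ref{c-ergodic-hull} says the $S$-action on $X$ is $c$-ergodic. The key step is to produce an index $i \in \{1,\ldots,n\}$ for which the $\hat S_i$-action on $X$ is \emph{not} $c$-ergodic; once such an $i$ is in hand, Proposition~\ref{prop:prod} applied with $T=\hat S_i$, $T'=S_i$, $Y=X$, $\tilde c = c$ directly yields a continuous homomorphism $d\colon S_i\to G$ with $c$ cohomologous to $d\circ\pi_i$, which is the conclusion.

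To produce such an $i$ I argue by contradiction: assume that $\hat S_i$ acts $c$-ergodically on $X$ for every $i$, and deduce that some $c|_{\hat S_i}$ is in fact cohomologous to a cocycle valued in a proper algebraic subgroup of $G$, contradicting the assumption. For each $i$ pick an amenable, metrically-ergodic $S_i$-Lebesgue space $B_i$ (Theorem~\ref{boundary}), and set $B := B_1\times\cdots\times B_n$; Proposition~\ref{product-stable} makes $B$ an $S$-amenable, $S$-metric-ergodic Lebesgue space, and Lemma~\ref{me-e} makes $B\times X$ an $S$-ergodic Lebesgue space. Since $c$ is not cohomologous to bounded and $k$ is local, Theorem~\ref{nontrivial} gives that $S$ does not act $f$-ergodically on $B\times X$ for $f(s,b,x):=c(s,x)$; Theorem~\ref{a-initial} then produces an initial algebraic representation $\phi\colon B\times X\to (\mathbf G/\mathbf H_0)(k)$ with $\mathbf H_0\lneq \mathbf G$. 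For each $i$, viewing $B\times X$ as a $\hat S_i$-space with cocycle $c|_{\hat S_i}$, the map $\phi$ remains a non-initial algebraic representation, so the initial $\hat S_i$-representation $\phi_i\colon B\times X\to(\mathbf G/\mathbf H_i)(k)$ has $\mathbf H_i\lneq \mathbf G$ (and $\mathbf H_i\subseteq\mathbf H_0$ up to conjugation); since $S_i$ commutes with $\hat S_i$ and the cocycle extends, Theorem~\ref{yoneda} yields a continuous homomorphism $d_i\colon S_i\to (N_{\mathbf G}(\mathbf H_i)/\mathbf H_i)(k)$ with $s_i\phi_i(y)=\phi_i(y)d_i(s_i)$ a.e., and post-composing $\phi_i$ with the $\mathbf G$-map $\mathbf G/\mathbf H_i\to\mathbf G/N_{\mathbf G}(\mathbf H_i)$ gives an $S$-equivariant $\psi_i\colon B\times X\to (\mathbf G/N_{\mathbf G}(\mathbf H_i))(k)$.

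The technical heart is a descent step: combining the Yoneda-provided commuting $S_i$-symmetry, the simplicity of $\mathbf G$ (which constrains $N_{\mathbf G}(\mathbf H_i)$), and the $S_j$-metric-ergodicity of each boundary factor $B_j$ (applied through a $G$-invariant metric on $\mathbf G/\mathbf H_i$ obtained from a $G$-equivariant projective embedding), one forces $\phi_i$ to essentially factor through the projection $B\times X\to X$. The resulting $\hat S_i$-equivariant measurable map $X\to(\mathbf G/\mathbf H_i)(k)$, via Proposition~\ref{coset contraction-p} (which reduces its essential image to a single $G$-orbit) and Proposition~\ref{proper-cohom}, exhibits $c|_{\hat S_i}$ as cohomologous to a cocycle in a conjugate of $\mathbf H_i(k)$ --- a proper algebraic subgroup of $G$ --- contradicting the $c$-ergodicity of $\hat S_i$ on $X$, and completing the argument by contradiction. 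The main obstacle is precisely this descent: it must marry the algebraic Yoneda machinery (which only constrains $\phi_i$ along the commuting $S_i$-direction) with the metric-ergodicity of the boundary factors $B_j$ (which requires a $G$-invariant metric on the non-metric space $\mathbf G/\mathbf H_i$), and in carrying it out one expects to follow closely the strategy of \cite{FM} in our algebraic-representation framework.
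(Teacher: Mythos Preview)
Your opening reduction is exactly right: $S$-action on $X$ is $c$-ergodic by Proposition~\ref{c-ergodic-hull}, and if some $\hat S_i$-action on $X$ fails to be $c$-ergodic then Proposition~\ref{prop:prod} finishes the proof. The gap is in your ``descent'' step. You assert that $\phi_i\colon B\times X\to({\bf G}/{\bf H}_i)(k)$ can be forced to factor through $X$ by invoking the metric ergodicity of each $B_j$ against ``a $G$-invariant metric on ${\bf G}/{\bf H}_i$ obtained from a $G$-equivariant projective embedding.'' No such metric exists: a projective embedding is equivariant for a linear $G$-action on ${\mathbb P}^N(k)$, and the natural metric there is invariant only under the compact (unitary) group, not under $G$. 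Since ${\bf H}_i$ need not be cocompact, ${\bf G}/{\bf H}_i$ simply carries no $G$-invariant metric in general, and metric ergodicity of $B_j$ gives you nothing against it. So the heart of your argument is missing, not merely sketched.

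The paper bypasses this obstacle by an induction that peels off one boundary factor at a time rather than descending from $B\times X$ to $X$ in a single stroke. Writing $c_j$ for the pullback of $c$ to $X\times\prod_{i=1}^j B_i$, one shows inductively that for each $0\le j\le n-1$ the $\hat S_{j+1}$-action on $X\times\prod_{i=1}^j B_i$ is $c_j$-ergodic. The base case $j=0$ is exactly your reduction via Proposition~\ref{prop:prod}. For the inductive step, if the claim fails at level $j$, one first upgrades (via Proposition~\ref{prop:prod} and the standing contradiction hypothesis) from $\hat S_{j+1}$-non-ergodicity to full $S$-non-ergodicity on $X\times\prod_{i=1}^j B_i$, obtaining an $S$-equivariant $\phi$ into some $({\bf G}/{\bf H})(k)$. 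Now comes the point you are missing: since $\hat S_j=\ker\pi_j$ acts \emph{trivially} on $B_j$, freezing a generic $b_j\in B_j$ yields a map $\phi_{b_j}\colon X\times\prod_{i=1}^{j-1}B_i\to({\bf G}/{\bf H})(k)$ that is still $\hat S_j$-equivariant --- no metric is needed, only this trivial-action observation. This contradicts the claim at level $j-1$. Finally, the claim at level $j=n-1$ contradicts Theorem~\ref{nontrivial} applied with $T=\hat S_n$ and $B=\prod_{i=1}^{n-1}B_i$. The moral: shift the index of $\hat S$ along with the boundary factors, and slice rather than descend.
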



\begin{proof}
We will assume by contradiction that
there exists no $i \leq n$ and a continuous homomorphism $d:S_i\to G$
such that $c$ is cohomologous to the homomorphism $d\circ\pi_i$.

Theorem~\ref{boundary} guarantees that for every $i\leq n$
there exists an amenable and metrically ergodic $S_i$-space $B_i$,
which we now fix.
We let $c_0=c$
and for $0< j\leq n$ we let $c_j:S\times X\times \prod_{i=1}^j B_i\to G$ be the obvious extension of $c$.
Note that $\hat{S}_{j+1}$ acts ergodically on $X\times \prod_{i=1}^j B_i$ (and actually on $X\times \prod_{i=1}^j B_i\times \prod_{i=j+2}^n B_i$)
by Lemma~\ref{me-e}, as $X$ is $\hat{S}_{j+1}$-ergodic probability measure preserving (by the irreducibility assumption)
and by Proposition~\ref{product-stable} applied to the spaces $B_i$.
We claim that for every $0\leq j\leq n-1$,
the $\hat{S}_{j+1}$-action on $X\times \prod_{i=1}^j B_i$ is $c_j$-ergodic.

The case $j=0$ follows by applying
Proposition~\ref{prop:prod} to
$T=\hat{S}_{1}$, $T'=S_{1}$ and $Y= X$,
as by assumption $c$ is not cohomologous to a cocycle taking values in a proper algebraic subgroup, 
hence by Proposition~\ref{c-ergodic-hull} the $S$-action on $X$ is $c$-ergodic.

Assume that for some $1\leq j\leq n-1$ the claim is true for $j-1$, but not for $j$.
Setting $T=\hat{S}_{j+1}$, $T'=S_{j+1}$ and $Y= X\times \prod_{i=1}^{j} B_i$,
we get by Proposition~\ref{prop:prod} that
the $S$ action on $Y$ is not $c_j$-ergodic.
That is, there exists
a proper $k$-algebraic subgroup ${\bf H}\lneq {\bf G}$
and a measurable map $\phi:X\times \prod_{i=1}^{j} B_i\to {\bf G}/{\bf H}(k)$
satisfying for every $s\in S$ and a.e.\ $y=(x,b_1,\ldots,b_j)\in X\times \prod_{i=1}^{j} B_i$, $\phi(sy)=c(s,x)\phi(y)$.
Therefore, for $s\in \hat{S}_j$ and for a generic $b_j\in B_j$, the map
$\phi_{b_j}:X\times \prod_{i=1}^{j-1} B_i\to {\bf G}/{\bf H}(k)$ defined by
$\phi_{b_j}(x,b_1,\ldots,b_{j-1})=\phi(x,b_1,\ldots,b_j)$
satisfies $\phi_{b_j}(x,b_1,\ldots,b_{j-1})=c(s,x)\phi_{b_j}(x,b_1,\ldots,b_{j-1})$.
This contradicts the assumption that  the claim is true for $j-1$.

Thus the claim is proven by induction. In particular, we conclude that the claim holds for $j=n-1$.
But this contradicts Theorem~\ref{nontrivial}, applied to $T=\hat{S}_n$, $B=\prod_{i=1}^{n-1} B_i$ and $f=c_{n-1}$,
as by Proposition~\ref{product-stable} the action of $\hat{S}_n$ on $\prod_{i=1}^{n-1} B_i$ is amenable and metrically ergodic.
Therefore, by contradiction, we deduce that there exists an $i \leq n$ and a continuous homomorphism $d:T_i\to G$
such that $c$ is cohomologous to the homomorphism $d\circ\pi_i$.
\end{proof}

\begin{proof}[Proof of Theorem~\ref{thm:lattice}]
First observe that there exists a subfield $k'<k$ which is separable and complete, ${\bf G}$ is defined over $k'$ and $\delta(\Gamma)<{\bf G}(k')$.
Indeed, take $k'$ to be the closure of a finite extension of the subfield generated by $\delta(\Gamma)$ over which ${\bf G}$ is defined.
Replacing $k$ by $k'$ we may assume $k$ is separable.
We do so.

We let $\kappa$ be as in Example~\ref{stand} and consider the cocycle $c=\delta\kappa$.
By Proposition~\ref{Gamma-c-ergodic},
$c$ is not cohomologous to a cocycle taking values in a proper algebraic subgroup or a bounded subgroup.
It follows by Theorem~\ref{prod-cocycle} that
there exists $i \leq n$ and a continuous homomorphism $d:S_i\to G$
such that $c$ is cohomologous to the homomorphism $d\circ\pi_i$.
By Proposition~\ref{lattice-vs-cocycle} $\delta$ extends to a homomorphism $T\to G$.
By the simplicity of ${\bf G}$ it is easy to see that such a homomorphism must factor through the projection $\pi_i:T\to T_i$ for some $i \leq n$.
\end{proof}

\section{Bi-algebraic representations of bi-actions} \label{algebraic bi-actions}

In this section we study a generalization of the setting studied in \S\ref{alg-rep}.
Throughout this section we fix
\begin{itemize}
\item lcsc groups $S$ and $T$,
\item an $S\times T$-Lebesgue space $Y$,
\item a field $k$ with an absolute value which is separable and complete (as a metric space),
\item a $k$-algebraic group ${\bf G}$,
\item a measurable cocycle
$c:S \times Y\ec T \to G$,
where $G={\bf G}(k)$ is regarded as a Polish group (Proposition~\ref{polishing}).
We denote by $\tilde{c}$ the pullback of $c$ to $Y$.
\end{itemize}

\begin{defn}
Given all the data above, a bi-algebraic representation of $Y$
consists of the following data:
\begin{itemize}
\item a $k$-algebraic group ${\bf L}$.
\item a $k$-$({\bf G}\times {\bf L})$-algebraic variety ${\bf V}$,
\item a homomorphism $d:T\to {\bf L}(k)$ with a Zariski dense image,
\item a measurable map $\phi:Y \to {\bf V}(k)$ such that for every $s\in S$, $t\in T$ and almost every $y\in Y$,
\[ \phi(sty)=\tilde{c}(s,y)d(t)\phi(y). \]
\end{itemize}
We abbreviate the notation by saying that ${\bf V}$ is a bi-algebraic representation of $Y$,
denoting the extra data by ${\bf L}_{\bf V}, d_{\bf V}$ and $\phi_{\bf V}$.
Given another bi-algebraic representation ${\bf U}$ we let ${\bf L}_{{\bf U},{\bf V}}<{\bf L}_{\bf U}\times {\bf L}_{\bf V}$ be the Zariski closure of the image of $d_{\bf U}\times d_{\bf V}:T\to {\bf L}_{\bf U}\times {\bf L}_{\bf V}$. ${\bf L}_{{\bf U},{\bf V}}$ acts on ${\bf U}$ and ${\bf V}$ via its projections on ${\bf L}_{\bf U}$ and ${\bf L}_{\bf V}$ correspondingly.
A morphism of bi-algebraic representations of $Y$ from the bi-algebraic representation ${\bf U}$ to the bi-algebraic representation ${\bf V}$ is
a $k$-algebraic map $\psi:{\bf U}\to {\bf V}$ which is ${\bf G}\times{\bf L}_{{\bf U},{\bf V}}$-equivariant,
and such that $\phi_{\bf V}=\psi\circ \phi_{\bf U}$.

A bi-algebraic representation ${\bf V}$ of $Y$ is said to be a coset bi-algebraic representation if
${\bf V}={\bf G}/{\bf H}$ for some $k$-algebraic subgroup ${\bf H}<{\bf G}$,
and ${\bf L}$ is a $k$-subgroup of $N_{\bf G}({\bf H})/{\bf H}$ which acts on ${\bf V}$ as described in Proposition~\ref{aut-identification}.
\end{defn}

It is clear that the collection of bi-algebraic representations of $Y$ and their morphisms form a category.
In Theorem~\ref{a-initial} we proved the existence of initial objects in categories of algebraic representations of ergodic actions.
Our notion of bi-algebraic representation is slightly harder to handle, as the algebraic group ${\bf L}$ in its definition is arbitrary.
Due to Proposition~\ref{mixing} this obstacle could be overcome, under strong enough ergodic assumptions.

\begin{theorem} \label{initial}
Assume $(Y \ec S)^j$ is $T$-ergodic for every $j\in \bbN$.
Then the category of bi-algebraic representations of $Y$ has an initial object.
Moreover, there exists an initial object which is a coset bi-algebraic representation.
\end{theorem}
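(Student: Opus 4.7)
The plan is to mirror the proof of Theorem~\ref{a-initial} in the underlying category of $S$-algebraic representations of $Y$ (ignoring the $T$-action through ${\bf L}$), and then recover the ${\bf L}$-structure via the Yoneda-style mechanism of Theorem~\ref{yoneda}. The hypothesis that $(Y\ec S)^j$ is $T$-ergodic for every $j$ plays two roles: the $j=1$ case provides the $S$-ergodicity of $Y$ needed to invoke Theorem~\ref{a-initial}, while the full hypothesis enters through Proposition~\ref{mixing} to pin down the ${\bf L}$-structures attached to arbitrary bi-algebraic representations.

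Concretely, I would first regard $Y$ as an $S$-Lebesgue space with cocycle $\tilde c$ and apply Theorem~\ref{a-initial} to obtain an initial coset algebraic representation $\phi_0\colon Y\to({\bf G}/{\bf H}_0)(k)$. Since $T$ commutes with $S$ and $\tilde c$ extends trivially to an $(S\times T)$-cocycle, Theorem~\ref{yoneda} applied with $T'=T$ produces a homomorphism $d_0\colon T\to(N_{\bf G}({\bf H}_0)/{\bf H}_0)(k)$ satisfying $t\phi_0(y)=\phi_0(y)d_0(t)$ a.e. Let ${\bf L}_0\le N_{\bf G}({\bf H}_0)/{\bf H}_0$ be the Zariski closure of $d_0(T)$; this is a $k$-subgroup in which $d_0$ has Zariski-dense image by construction. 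The tuple $({\bf L}_0,{\bf G}/{\bf H}_0,d_0,\phi_0)$ is then a coset bi-algebraic representation and the candidate initial object.

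To verify initiality, take any bi-algebraic representation $({\bf L},{\bf V},d,\phi)$. Proposition~\ref{coset contraction}, applied to the underlying $S$-algebraic representation, factors $\phi=i\circ\phi'$ with $\phi'\colon Y\to({\bf G}/{\bf K})(k)$ a coset representation and $i\colon{\bf G}/{\bf K}\hookrightarrow{\bf V}$ the orbit ${\bf G}$-map. The image $i({\bf G}/{\bf K})$ is preserved by $d(T)$, hence by ${\bf L}=\overline{d(T)}^Z$, so ${\bf L}$ acts on ${\bf G}/{\bf K}$ by ${\bf G}$-equivariant automorphisms and embeds in $N_{\bf G}({\bf K})/{\bf K}$ via Proposition~\ref{aut-identification}. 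Initiality of $\phi_0$ in the $S$-algebraic category then yields a unique ${\bf G}$-morphism $\alpha\colon{\bf G}/{\bf H}_0\to{\bf G}/{\bf K}$ with $\phi'=\alpha\circ\phi_0$, and the equivariance of $\alpha$ with respect to ${\bf L}_{{\bf G}/{\bf H}_0,{\bf V}}=\overline{(d_0\times d)(T)}^Z\subset{\bf L}_0\times{\bf L}$ follows from
\[\alpha(\phi_0(y)d_0(t))=\alpha(\phi_0(ty))=\phi'(ty)=\phi'(y)d(t)=\alpha(\phi_0(y))d(t),\]
which holds on $\phi_0(Y)$, extends to all of ${\bf G}/{\bf H}_0$ by ${\bf G}$-equivariance, and then extends from $(d_0\times d)(T)$ to its Zariski closure ${\bf L}_{{\bf G}/{\bf H}_0,{\bf V}}$ by density. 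Uniqueness of $\alpha$ is inherited from its uniqueness as a ${\bf G}$-morphism.

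The main obstacle is to pin down precisely how the $j>1$ cases of the ergodic hypothesis enter. My expectation is that Proposition~\ref{mixing}, applied to $Y\ec S$ as a $T$-space with the constant cocycle $d_0$ having Zariski-dense image in ${\bf L}_0$, is needed to ensure that the Zariski closure ${\bf L}_{{\bf G}/{\bf H}_0,{\bf V}}$ projects onto ${\bf L}_0$ and ${\bf L}$ in the expected graph-like manner — ruling out exotic bi-algebraic representations that would obstruct initiality. A minor technical concern is confirming that ${\bf L}_0$ is defined over $k$, which follows from standard properties of Zariski closures of subsets of $k$-points under our hypothesis that $(k,|\cdot|)$ is complete with a non-trivial absolute value.
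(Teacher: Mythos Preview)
Your proposal has a genuine gap at the very first step. You assert that the $j=1$ case of the hypothesis ``$(Y\ec S)^j$ is $T$-ergodic'' gives $S$-ergodicity of $Y$, but it does not: the statement that $Y\ec S$ is $T$-ergodic says only that $T$ acts ergodically on the space of $S$-ergodic components, i.e., that $Y$ is $(S\times T)$-ergodic. In the principal application (the proof of Theorem~\ref{secthm}) one takes $Y=T\times X$ with $S=T$ acting by $s\cdot(t',x)=(st',sx)$; here $Y\ec S\cong X$ is nontrivial, so $Y$ is certainly not $S$-ergodic. Consequently you cannot invoke Theorem~\ref{a-initial} (nor Proposition~\ref{coset contraction}) for the $S$-action with cocycle $\tilde c$, and the construction of your candidate $\phi_0$ collapses. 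The same obstruction blocks your verification of initiality, where you again apply Proposition~\ref{coset contraction} to the $S$-action.

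The paper's proof addresses exactly this issue, and this is where the full strength of the hypothesis enters. In Lemma~\ref{mixing coset contraction} one regards a given bi-algebraic representation $({\bf L},{\bf V},d,\phi)$ as an algebraic representation of the \emph{ergodic} $(S\times T)$-space $Y$ via the cocycle $(s,t,y)\mapsto(\tilde c(s,y),d(t))$ into ${\bf G}\times{\bf L}$, rather than into ${\bf G}$. Proposition~\ref{coset contraction} then produces a coset $({\bf G}\times{\bf L})/{\bf M}$, and the higher-$j$ ergodicity hypothesis is used precisely here: Proposition~\ref{mixing}, applied to the $T$-space $Y\ec S$ with the constant cocycle $d$ into ${\bf L}$, forces $\pi_2({\bf M})={\bf L}$, so that $({\bf G}\times{\bf L})/{\bf M}\cong{\bf G}/{\bf H}$ as ${\bf G}$-varieties. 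Only after this reduction does one run the Noetherian minimality and product argument parallel to Theorem~\ref{a-initial}. Your guess that Proposition~\ref{mixing} controls the graph ${\bf L}_{{\bf G}/{\bf H}_0,{\bf V}}$ is off the mark; its actual role is to collapse the ${\bf L}$-direction in the coset contraction.
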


We will first prove the following lemma which is a strengthening of
Proposition~\ref{coset contraction}.

\begin{lemma} \label{mixing coset contraction}
Assume $(Y \ec S)^j$ is $T$ ergodic for every $j\in \bbN$.
Let ${\bf V}$ be a bi-algebraic representation of $Y$.
Then there exists a coset bi-algebraic representation of $Y$ for some $k$-algebraic subgroup ${\bf H}<{\bf G}$
and a morphism of bi-algebraic representations $i:{\bf G}/{\bf H}\to {\bf V}$.
\end{lemma}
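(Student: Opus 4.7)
The plan is to bundle the $S$-cocycle and the $T$-homomorphism into a single $(S\times T)$-cocycle, apply Proposition~\ref{coset contraction-p} to it, then use the strong ergodicity assumption to descend from the resulting $({\bf G}\times {\bf L})$-orbit to a single ${\bf G}$-orbit. First, I define $c'':(S\times T)\times Y\to ({\bf G}\times {\bf L})(k)$ by $c''((s,t),y)=(\tilde c(s,y),d(t))$; the cocycle identity reduces to the fact that $\tilde c$ is $T$-invariant in the second variable (since it is pulled back from $Y\ec T$). The map $\phi_{\bf V}$ is $c''$-equivariant for the $(S\times T)$-action on $Y$, and that action is ergodic, since any $(S\times T)$-invariant function on $Y$ descends to a $T$-invariant function on $Y\ec S$, which is constant by hypothesis. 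Proposition~\ref{polishing} ensures that $({\bf G}\times {\bf L})(k)$ acts on $V={\bf V}(k)$ with locally closed orbits, so Proposition~\ref{coset contraction-p} produces $v_0\in V$ such that $\phi_{\bf V}$ essentially takes values in the orbit $({\bf G}\times {\bf L})(k)\cdot v_0$.

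The key step is to refine this so that $\phi_{\bf V}$ essentially lands in the smaller ${\bf G}$-orbit ${\bf G}v_0$. Set ${\bf H}_0:=\{\ell\in{\bf L}:\ell{\bf G}v_0={\bf G}v_0\}$, a $k$-subgroup of ${\bf L}$. The composition of $\phi_{\bf V}$ with the quotient map $({\bf G}\times {\bf L})v_0\to {\bf L}/{\bf H}_0$ (by the ${\bf G}$-action, using that ${\bf G}$ and ${\bf L}$ commute) is $S$-invariant since $\tilde c$ takes values in ${\bf G}$, and therefore descends to a measurable map $\bar\phi:Y\ec S\to ({\bf L}/{\bf H}_0)(k)$ satisfying $\bar\phi(ty)=d(t)\bar\phi(y)$. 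This is an algebraic representation of the $T$-Lebesgue space $Y\ec S$ with constant cocycle $d$. By hypothesis $(Y\ec S)^j$ is $T$-ergodic for every $j$ and $\overline{d(T)}^Z={\bf L}$, so Proposition~\ref{mixing} applies and forces $\bar\phi$ to be essentially constant at an ${\bf L}$-fixed point of $({\bf L}/{\bf H}_0)(k)$. An elementary conjugation argument shows ${\bf L}/{\bf H}_0$ admits an ${\bf L}$-fixed point only when ${\bf H}_0={\bf L}$, hence ${\bf L}$ preserves ${\bf G}v_0$ and $\phi_{\bf V}$ essentially takes values in ${\bf G}v_0$.

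Finally, set ${\bf H}:=\Stab_{\bf G}(v_0)$, so that ${\bf G}v_0\simeq {\bf G}/{\bf H}$. The ${\bf L}$-action on ${\bf G}v_0$ is by ${\bf G}$-equivariant automorphisms (since ${\bf G}$ and ${\bf L}$ commute), so Proposition~\ref{aut-identification} yields a $k$-morphism $\rho:{\bf L}\to N_{\bf G}({\bf H})/{\bf H}$. Take ${\bf L}':=\overline{\rho({\bf L})}^Z$ and $d':=\rho\circ d$; Zariski density of $d(T)$ in ${\bf L}$ upgrades to Zariski density of $d'(T)$ in ${\bf L}'$. Defining $\phi_0:Y\to({\bf G}/{\bf H})(k)$ through the identification ${\bf G}v_0={\bf G}/{\bf H}$ and $i:{\bf G}/{\bf H}\to {\bf V}$ by $g{\bf H}\mapsto gv_0$, a short verification (matching the action of $d(t)\in {\bf L}$ on ${\bf G}v_0$ with the action of $d'(t)\in N_{\bf G}({\bf H})/{\bf H}$ under Proposition~\ref{aut-identification}) yields the required equivariance $\phi_0(sty)=\tilde c(s,y)d'(t)\phi_0(y)$, and $i$ is then straightforwardly a morphism of bi-algebraic representations.

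The hard part is the middle step: a priori the ${\bf G}$-orbit ${\bf G}v_0$ may be strictly smaller than the $({\bf G}\times {\bf L})$-orbit in which $\phi_{\bf V}$ initially lands, and it is precisely the $j$-fold mixing-type ergodicity assumption, routed through Proposition~\ref{mixing}, that collapses this discrepancy and forces ${\bf H}_0={\bf L}$. All other steps are routine adaptations of the arguments already developed in \S\ref{alg-rep}.
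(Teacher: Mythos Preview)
Your proof is correct and follows essentially the same route as the paper's. Both arguments package $\tilde c$ and $d$ into a single $(S\times T)$-cocycle into $({\bf G}\times{\bf L})(k)$, apply the coset-contraction step to land in a single $({\bf G}\times{\bf L})$-orbit, and then use Proposition~\ref{mixing} on the induced $T$-representation of $Y\ec S$ in the ${\bf G}$-quotient to force that quotient to be a point; your group ${\bf H}_0$ is exactly the paper's $\pi_2({\bf M})$ (since ${\bf G}$ and ${\bf L}$ commute, $\ell v_0\in{\bf G}v_0$ iff $\ell{\bf G}v_0={\bf G}v_0$), so the two collapses coincide. Your final identification ${\bf H}=\Stab_{\bf G}(v_0)$ and the explicit construction of ${\bf L}'$, $d'$ via Proposition~\ref{aut-identification} just spell out what the paper compresses into ``the lemma follows easily''; in fact your ${\bf H}={\bf M}\cap{\bf G}$ is the precise stabilizer, which is what is actually needed.
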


\begin{proof}
Note that $Y$ is an ergodic $(S\times T)$-Lebesgue and the map $S\times T \times Y\to ({\bf G}\times {\bf L})(k)$
defined by $(s,t,y)\mapsto (\tilde{c}(s,y),d(t))$
is an $(S\times T)$-cocycle from $Y$ to $({\bf G}\times {\bf L})$,
as $\tilde{c}$ is a pull back of $c$, hence independent of the $T$ action, and $d$ depends only on the group $T$.
Considering ${\bf V}$ as an algebraic representation
of the ergodic $(S\times T)$-Lebesgue space $Y$ via that cocycle,
by applying Proposition~\ref{coset contraction}, we may reduce to the case
${\bf V}=({\bf G}\times {\bf L})/{\bf M}$ for some $k$-algebraic subgroup ${\bf M}<{\bf G}\times {\bf L}$.
We do so.
Denote the obvious projection from ${\bf G} \times {\bf L}$
to ${\bf G}$ and ${\bf L}$ correspondingly by $\pi_1$ and $\pi_2$.
The composition of the map $\phi:Y \to ({\bf G}\times {\bf L})/{\bf M}(k)$ with the $G$-invariant map $({\bf G}\times {\bf L})/{\bf M}(k)\to {\bf L}/\pi_2({\bf M})(k)$
clearly factors through $Y\ec S$, and thus gives a $d$-equivariant map from the $T$-Lebesgue space $Y\ec S$ to ${\bf L}/\pi_2({\bf M})(k)$.
Thus ${\bf L}/\pi_2({\bf M})$
becomes a coset algebraic representation of
the $T$-Lebesgue space
$Y\ec S$
via the cocycle $d$
to the algebraic group ${\bf L}$.
Applying Proposition~\ref{mixing}, we conclude that $\pi_2({\bf M})={\bf L}$.
It follows that as ${\bf G}$-varieties, $ ({\bf G}\times {\bf L})/{\bf M} \simeq {\bf G}/\pi_1({\bf M})$.
The lemma follows easily.
\end{proof}

The proof of the theorem proceeds as the proof of Theorem~\ref{a-initial}.

\begin{proof}[Proof of Theorem~\ref{initial}]
We consider the collection
\[ \{{\bf H}<{\bf G}~|~{\bf H}\mbox{ is defined over } k \mbox{ and there exists a coset bi-representation to } {\bf G}/{\bf H} \}. \]
This is a non-empty collection as it contains ${\bf G}$.
By the Neotherian property, this collection contains a minimal element.
We choose such a minimal element ${\bf H}_0$
and fix corresponding algebraic $k$-subgroup ${\bf L}_0<N_{\bf G}({\bf H}_0)/{\bf H}_0)$,
homomorphism $d_0:T \to {\bf L}_0(k)$
and representation $\phi_0:Y \to ({\bf G}/{\bf H}_0)(k)$.
We argue to show that this coset bi-representation is the required initial object.

Fix any bi-algebraic representation of $Y$, ${\bf V}$.
It is clear that, if exists, a morphism of bi-algebraic representations from ${\bf G}/{\bf H}_0$ to ${\bf V}$ is unique, as two different ${\bf G}$-maps
${\bf G}/{\bf H}_0\to {\bf V}$ agree nowhere.
We are left to show existence.
To this end we consider
the product bi-representation ${\bf V}\times {\bf G}/{\bf H}_0$ given by the data $\phi=\phi_{\bf V}\times \phi_0$,
$d=d_{\bf V}\times d_0$ and ${\bf L}$ being the Zariski closure of $d(T)$ in ${\bf L}_{\bf V}\times {\bf L}_0$.
Applying Lemma~\ref{mixing coset contraction} to this product bi-representation we obtain the commutative diagram


\begin{equation} \label{diag-AG}
\xymatrix{ Y \ar@{.>}[r] \ar[d]^{\phi_{\bf V}} \ar[rd]^{\phi} \ar@/^3pc/[rrd]^{\phi_0} & {\bf G}/{\bf H} \ar@{.>}[d]_{i} &  \\
		   {\bf V} & {\bf V}\times {\bf G}/{\bf H}_0 \ar[r]^{~~p_2} \ar[l]_{p_1~~~~} & {\bf G}/{\bf H}_0  }
\end{equation}

By the minimality of ${\bf H}_0$, the ${\bf G}$-morphism $p_2\circ i:{\bf G}/{\bf H} \to {\bf G}/{\bf H}_0$ must be a $k$-isomorphism,
hence an isomorphism of bi-algebraic representations.
We thus obtain the morphism of bi-algebraic representations
\[ p_1\circ i \circ (p_2\circ i)^{-1}:{\bf G}/{\bf H}_0 \to {\bf V}. \]
\end{proof}

\begin{defn}
We say that the $(S,T)$ bi-action on $Y$ is $c$-ergodic if  for every representation ${\bf V}$,
$\phi_{\bf V}$ is essentially constant and its essential image is a ${\bf G}$-fixed point.
Equivalently, the bi-action is $c$-ergodic if the initial coset bi-representation alluded to in Theorem~\ref{initial}
is the one point set ${\bf G}/{\bf G}$.
\end{defn}

As in the discussion carried at the beginning of \S\ref{s-yoneda}, Theorem~\ref{initial} could be interpreted as saying that a certain functor is representable.
Consider the category $\mathcal{D}$ whose objects are $k$-${\bf G}$-varieties ${\bf V}$ endowed with
a choice of a $k$-algebraic subgroup ${\bf L}_{\bf V}<\Aut_{\bf G}({\bf V})$, where a morphism from ${\bf U}$ to ${\bf V}$
is given by a $k$-algebraic homomorphism $f:{\bf L}_{\bf U} \to {\bf L}_{\bf V}$ and a $k$-algebraic map $\psi:{\bf U}\to {\bf V}$
which is ${\bf G}$ and $f$-equivariant.
We have a natural functor $F$ from $\mathcal{D}$ to ${\bf Sets}$, given by associating with any object of $\mathcal{D}$ all the bi-algebraic representations of $Y$
targeting at that object.
The category of bi-algebraic representations of $Y$ is clearly isomorphic with the category of elements of the functor $F$.
In particular, the former category has an initial object if and only if $F$ is representable,
and in this case, by Yoneda lemma, $\Aut(F)$ is isomorphic to the automorphism group in $\mathcal{D}$ of such an initial object.
Using this point of view, one may care to formulate an exact analogue of Theorem~\ref{yoneda}.
We will not do it here, as all we need is the following proposition.

\begin{prop} \label{T'}
Let $T'$ be a group acting on $Y$ commuting with $S$ and $T$.
Assume the cocycle $\tilde{c}$ is $T'$-invariant in the sense that
for all $s\in S$, $t'\in T'$ and a.e.\ $y\in Y$, $\tilde{c}(s,ty)=\tilde{c}(s,y)$.
Assume $(Y \ec S)^j$ is both $T$ and $T'$ ergodic for every $j\in \bbN$.
Let ${\bf G}/{\bf H}$, ${\bf L}<N_{\bf G}({\bf H})/{\bf H}$,
$d:T \to {\bf L}(k)$ and $\phi:Y \to ({\bf G}/{\bf H})(k)$ be an initial object
in the category of bi-algebraic representations of the $(S,T)$ bi-space $Y$, as guaranteed by Theorem~\ref{initial}.
Then there exists a $k$-algebraic group
${\bf L}'<N_{\bf G}({\bf H})/{\bf H}$
which commutes with ${\bf L}$
and a homomorphism $d':T'\to {\bf L}'(k)$
such that ${\bf G}/{\bf H}$, ${\bf L}'$, $d'$ and $\phi$ form an initial object in the category of bi-algebraic representations of the $(S,T')$ bi-space $Y$.
\end{prop}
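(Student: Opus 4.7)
The plan is a Yoneda-type construction as in Theorem~\ref{yoneda}: use that $T'$ commutes with $S$ and $T$ and leaves $\tilde c$ invariant to pull the $T'$-action back through the initial $(S,T)$-bi-representation $\phi$, obtaining a homomorphism $d':T'\to N_{\bf G}({\bf H})/{\bf H}(k)$ whose image commutes with ${\bf L}$. Initiality of the resulting quadruple in the $(S,T')$-category is then established by producing an initial $(S,T')$-bi-representation via Theorem~\ref{initial} and exploiting the symmetry between $T$ and $T'$: the cocycle $\tilde c$ is automatically also $T$-invariant (being pulled back from $S\times Y\ec T$), and the required ergodicity of $(Y\ec S)^j$ holds for both $T$ and $T'$ by hypothesis.

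For each $t'\in T'$, set $\phi^{t'}(y):=\phi(t'y)$. The commutation of $t'$ with $s,t$ together with $T'$-invariance of $\tilde c$ gives
\[
\phi^{t'}(sty)=\phi(stt'y)=\tilde c(s,t'y)d(t)\phi(t'y)=\tilde c(s,y)d(t)\phi^{t'}(y),
\]
so $({\bf G}/{\bf H},{\bf L},d,\phi^{t'})$ is again an $(S,T)$-bi-algebraic representation with the same data. By initiality of $\phi$ the unique $(S,T)$-morphism between these two representations is a $k$-${\bf G}$-automorphism of ${\bf G}/{\bf H}$ that centralizes the ${\bf L}$-action (since the ${\bf L}$-data coincide on source and target), and by Proposition~\ref{aut-identification} corresponds to an element $d'(t')\in Z_{N_{\bf G}({\bf H})/{\bf H}}({\bf L})(k)$ characterized by $\phi(t'y)=d'(t')\cdot\phi(y)$. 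Uniqueness of morphisms makes $t'\mapsto d'(t')$ a homomorphism; letting ${\bf L}'$ be its Zariski closure, the quadruple $({\bf G}/{\bf H},{\bf L}',d',\phi)$ is an $(S,T')$-bi-algebraic representation (the required identity follows from the $S$-equivariance of $\phi$, the $T'$-invariance of $\tilde c$, and the commutation of the left ${\bf G}$- and right $N_{\bf G}({\bf H})/{\bf H}$-actions on ${\bf G}/{\bf H}$).

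The main step, and expected obstacle, is verifying initiality in the $(S,T')$-category. By Theorem~\ref{initial} there exists an initial $(S,T')$-bi-representation $({\bf G}/{\bf H}_0,{\bf L}_0,d_0,\phi_0)$; applying the symmetric version of the previous construction to it --- using the $T$-invariance of $\tilde c$ and the $T$-ergodicity of $(Y\ec S)^j$ --- produces a homomorphism $\tilde d:T\to Z_{N_{\bf G}({\bf H}_0)/{\bf H}_0}({\bf L}_0)(k)$ making $({\bf G}/{\bf H}_0,\tilde{\bf L},\tilde d,\phi_0)$ an $(S,T)$-bi-algebraic representation. Initiality in each category then furnishes $k$-${\bf G}$-morphisms $\psi_1:{\bf G}/{\bf H}\to{\bf G}/{\bf H}_0$ and $\psi_2:{\bf G}/{\bf H}_0\to{\bf G}/{\bf H}$ with $\psi_1\circ\phi=\phi_0$ and $\psi_2\circ\phi_0=\phi$ almost everywhere. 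Either composition is a $k$-${\bf G}$-endomorphism of a ${\bf G}/{\bf H}_\ast$ fixing $\phi_\ast$ almost everywhere, hence by Proposition~\ref{aut-identification} the identity, since the right action on ${\bf G}/{\bf H}_\ast$ of a nontrivial element of $N_{\bf G}({\bf H}_\ast)/{\bf H}_\ast$ has no fixed points at all. Consequently $\psi_1$ and $\psi_2$ are mutually inverse isomorphisms of $k$-${\bf G}$-varieties, and the characterization $\phi(t'y)=d'(t')\phi(y)$ compared with $\phi_0(t'y)=d_0(t')\phi_0(y)$ forces $\psi_2$ to transport $(d_0,{\bf L}_0)$ to $(d',{\bf L}')$; hence $({\bf G}/{\bf H},{\bf L}',d',\phi)$ is isomorphic to the initial $(S,T')$-bi-representation, and so itself initial, completing the proof.
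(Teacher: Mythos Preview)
Your proof is correct and follows essentially the same approach as the paper's: construct $d'$ via the Yoneda-type argument (exploiting that each $\phi^{t'}$ is again an $(S,T)$-bi-representation with the same ${\bf L}$-data, so initiality yields an ${\bf L}$-centralizing automorphism), then use the symmetry between $T$ and $T'$ to produce ${\bf G}$-morphisms ${\bf G}/{\bf H}\leftrightarrows{\bf G}/{\bf H}_0$ between the two initial objects and conclude they are isomorphic. The paper ends rather tersely with ``thus the two spaces are isomorphic and the proposition follows,'' whereas you spell out why the compositions $\psi_2\circ\psi_1$ and $\psi_1\circ\psi_2$ must be the identity (a ${\bf G}$-endomorphism of ${\bf G}/{\bf H}_\ast$ with any fixed point is the identity); this is a welcome clarification but not a different argument.
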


\begin{proof}
Using the facts that $T'$ commutes with $S$ and $T$ and that $\tilde{c}$ is $T'$-invariant,
it is easy to check that for every $s\in S$, $t\in T$, $t'\in T$, a.e.\ $y\in Y$,
\[ \phi(t'sty)=\tilde{c}(s,y)d(t)\phi(t'y). \]
Equivalently, for every $t'\in T'$ the data given by the algebraic group ${\bf L}$,
the $k$-$({\bf G}\times {\bf L})$-algebraic variety ${\bf G}/{\bf H}$,
the homomorphism $d:T\to {\bf L}(k)$ and $\phi\circ t'$
forms a bi-algebraic representation of the $(S,T)$ bi-space $Y$.

By the fact that the bi-algebraic representation given by ${\bf L}$, ${\bf G}/{\bf H}$,
$d$ and $\phi$ forms an initial object we get the dashed vertical arrow, which we denote $d'(t')$, in the following diagram.
\begin{equation} \label{diag-AG}
\xymatrix{ Y \ar[r]^{\phi} \ar[rd]_{\phi\circ t'} & {\bf G}/{\bf H} \ar@{.>}[d]^{d'(t')}  \\
		    & {\bf G}/{\bf H}  }
\end{equation}
By the uniqueness of the dashed arrow, the correspondence $t'\mapsto d'(t')$ is easily checked to form a homomorphism from $T'$ to the group
of $k$-${\bf G}$-automorphism of $ {\bf G}/{\bf H}$,
which we identify with $N_{\bf G}({\bf H})/{\bf H}(k)$
using Proposition~\ref{aut-identification}.
We define ${\bf L}'=\overline{d'(T')}^Z$.
$d'(T')$ commutes with ${\bf L}$ hence so does ${\bf L}'$.
We thus obtain a bi-representation of the $(S,T')$ bi-space $Y$
given by the algebraic group ${\bf L}'$, the variety ${\bf G}/{\bf H}$, the homomorphism $d':T' \to {\bf L}'(k)$ and the (same old) map
$\phi:Y \to {\bf G}/{\bf H}(k)$.
By Theorem~\ref{initial} there exists an initial object in the category of  bi-representation of the $(S,T')$ bi-space $Y$,
which we denote by ${\bf L}_0$, ${\bf G}/{\bf H}_0$,  $d_0$ and $\phi_0$.
Thus there is a $k$-${\bf G}$-morphism from ${\bf G}/{\bf H}_0$ to ${\bf G}/{\bf H}$.

We now note that the assumptions on the groups $T$ and $T'$ are symmetric.
Interchanging the roles of $T$ and $T'$, by the same reasoning as above, there is a
$k$-${\bf G}$-morphism from ${\bf G}/{\bf H}$ to ${\bf G}/{\bf H}_0$.
Thus the two spaces are isomorphic and the proposition follows.
\end{proof}

\section{Higher rank Super-Rigidity} \label{reals}

The following theorem is the technical hart of the paper. It is a general Super-Rigidity result
which is valid for any field with absolute value which is metrically separable and complete,
under the extra assumption of the non-triviality of a certain category of bi-algebraic representations.

\begin{theorem} \label{main alg}
Let $S$ and $T$ be locally compact groups.
Assume $T$ is generated as a topological group by the
subgroups $T_0,T_1,T_2,\ldots$
(for finitely or countably many $T_i$'s)
such that for each $i=1,2,\ldots$ the groups $T_{i-1}$ and $T_i$ commute.
Let $Y$ be an $S\times T$ Lebesgue space.
Assume $(Y \ec S)^j$ is $T_i$-ergodic for every $i=0,1,2,\ldots$ and every $j\in \bbN$.

Let $k$ be a field endowed with an absolute value which is complete and separable as a metric space.
Let ${\bf G}$ be the $k$-points of an adjoint form simple algebraic group defined over $k$
and denote $G={\bf G}(k)$.
Let $c:S\times Y \ec T \to G$ be a measurable cocycle.
Assume the $S$-action on $Y\ec T$ is $c$-ergodic,
but the $(S,T_0)$ bi-action on $Y$ is not $c$-ergodic.
Then there exists a continuous homomorphism $d:T\to G$ and a measurable map
$\phi:Y\to G$ with the following property:
for every $s\in S$, $t\in T$, for almost every $y\in Y$,
\[ \phi(sty)=\tilde{c}(s,y)\phi(y) d(t)^{-1},\]
where $\tilde{c}:S\times Y\to G$ is the pullback of $c$.
\end{theorem}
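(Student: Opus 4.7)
The plan is to apply the bi-algebraic-representation machinery of \S\ref{algebraic bi-actions} iteratively along the generating chain $T_0, T_1, T_2, \ldots$, then use the simplicity of ${\bf G}$ and the $c$-ergodicity of the $S$-action on $Y \ec T$ to collapse the resulting stabilizer subgroup to $\{e\}$, and finally assemble the individual homomorphisms into a continuous $d : T \to G$.

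First I apply Theorem~\ref{initial} to the $(S, T_0)$ bi-action on $Y$; since by hypothesis this bi-action is not $c$-ergodic and $(Y \ec S)^j$ is $T_0$-ergodic for every $j$, I obtain a non-trivial initial coset bi-algebraic representation $({\bf G}/{\bf H}, {\bf L}_0, d_0, \phi)$ with $d_0 : T_0 \to {\bf L}_0(k)$, $\phi : Y \to ({\bf G}/{\bf H})(k)$, and ${\bf H} \lneq {\bf G}$. Next I iterate Proposition~\ref{T'}: for each $i \ge 1$, the subgroup $T_i$ commutes with both $S$ and $T_{i-1}$, and $\tilde c$ is $T_i$-invariant as a pull-back from $Y \ec T$, so applying the proposition with $T = T_{i-1}$, $T' = T_i$ produces a homomorphism $d_i : T_i \to ({\bf N}/{\bf H})(k)$ (where ${\bf N} = N_{\bf G}({\bf H})$) whose Zariski closure commutes with that of $d_{i-1}$; crucially, the coset space ${\bf G}/{\bf H}$ and the map $\phi$ are unchanged at every stage.

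To collapse ${\bf H}$ I consider $\psi = \pi \circ \phi : Y \to ({\bf G}/{\bf N})(k)$, where $\pi : {\bf G}/{\bf H} \to {\bf G}/{\bf N}$ is the natural $k$-${\bf G}$-projection. Since ${\bf N}/{\bf H}$ acts trivially on ${\bf G}/{\bf N}$, each $d_i(T_i)$ fixes the target, so $\psi$ is almost everywhere $T_i$-invariant for every $i$. The stabilizer of a measurable function under a measurable lcsc action is a closed subgroup of $T$, and since it contains every $T_i$ it must contain the topological group they generate, namely $T$. Hence $\psi$ descends to $\bar\psi : Y \ec T \to ({\bf G}/{\bf N})(k)$, a $c$-equivariant algebraic representation of the ergodic $S$-action on $Y \ec T$. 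The hypothesised $c$-ergodicity (Definition~\ref{c-ergodic}) forces $\bar\psi$ to be essentially constant with image a ${\bf G}$-fixed point, and transitivity of ${\bf G}$ on ${\bf G}/{\bf N}$ forces ${\bf N} = {\bf G}$, i.e.\ ${\bf H} \triangleleft {\bf G}$; the simplicity of ${\bf G}$ combined with ${\bf H} \lneq {\bf G}$ then yields ${\bf H} = \{e\}$.

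With ${\bf H} = \{e\}$ the map $\phi$ lands in $G$ and each $d_i : T_i \to G$ satisfies $\phi(sty) = \tilde c(s, y) \phi(y) d_i(t)^{-1}$ for $t \in T_i$. I define $\hat d(t, y) := \phi(ty)^{-1} \phi(y)$, a jointly measurable $G$-valued function on $T \times Y$. The $T$-invariance of $\tilde c$ makes $\hat d(t, \cdot)$ essentially $S$-invariant, and a direct computation on $\langle T_i\rangle$ together with the same closed-stabilizer argument as above upgrades this to $T$-invariance; it is therefore a.e.\ constant on $Y \ec S$ by the $T$-ergodicity of this space (a consequence of $T_0$-ergodicity). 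The resulting measurable map $d : T \to G$ satisfies $\phi(ty) = \phi(y) d(t)^{-1}$ almost everywhere, whence it is a homomorphism; as a measurable homomorphism between lcsc Polish groups it is automatically continuous, and the desired identity $\phi(sty) = \tilde c(s, y) \phi(y) d(t)^{-1}$ follows at once. The main obstacle is the closed-stabilizer argument used twice, relying on the standard but non-trivial fact that stabilizers of measurable functions under measurable lcsc actions are closed subgroups; once that is granted, the rest is a direct orchestration of the framework of \S\ref{algebraic bi-actions} together with the simplicity of ${\bf G}$.
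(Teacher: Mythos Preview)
Your argument tracks the paper's proof through the reduction to ${\bf H}=\{e\}$: the use of Theorem~\ref{initial}, the iteration of Proposition~\ref{T'}, and the projection followed by $c$-ergodicity are essentially the paper's steps (the paper projects to ${\bf G}/{\bf L}'$ for the preimage ${\bf L}'\subset{\bf N}$ of the group generated by the ${\bf L}_i$, but since ${\bf L}'\subset{\bf N}$ your coarser projection to ${\bf G}/{\bf N}$ yields the same conclusion).

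The construction of $d$, however, diverges from the paper and contains a gap. You claim that $\hat d(t,\cdot)$ is $T$-invariant, i.e.\ $\hat d(t,t'y)=\hat d(t,y)$ for $t'\in T$, and then invoke the $T$-ergodicity of $Y\ec S$. But for a general $t\in T$ and $t'\in T_i$ one has $\hat d(t,t'y)=\phi(tt'y)^{-1}\phi(t'y)$, and there is no formula for $\phi(tt'y)$: the relation $\phi(t'z)=\phi(z)d_i(t')^{-1}$ applied with $z=ty$ gives $\phi(t'ty)$, not $\phi(tt'y)$, and $t$ need not commute with $t'$. So the asserted $T$-invariance is unjustified and the appeal to ergodicity of $Y\ec S$ does not go through.

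Your approach can be salvaged: for $t$ in the abstract subgroup $\langle T_i\rangle$ a direct iteration shows $\hat d(t,\cdot)$ is essentially constant, and the set $\{t\in T:\hat d(t,\cdot)\text{ is essentially constant}\}$ is closed because $t\mapsto\hat d(t,\cdot)\in L^0(Y,G)$ is continuous and the constants form a closed subset; hence it is all of $T$, with no ergodicity needed. The paper takes a different route for this final step: it shows $\phi(Y)$ is Zariski dense, pulls back to the coordinate ring via the injective $\phi^*:k[{\bf G}]\to L^0(Y,k)$, uses that $k[{\bf G}]$ is a union of finite-dimensional bi-translation-invariant subspaces to pass from $\langle T_i\rangle$-invariance of $\phi^*(k[{\bf G}])$ to $T$-invariance, and then identifies the resulting $T$-action on $k[{\bf G}]$ (commuting with left translations) with right translation by some $d(t)\in G$. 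A minor point: $G$ is Polish but need not be locally compact unless $k$ is local; automatic continuity of Borel homomorphisms between Polish groups nonetheless applies.
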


\begin{proof}
We let
${\bf G}/{\bf H}$, ${\bf L}_0<N_{\bf G}({\bf H})/{\bf H}$,
$d_0:T_0 \to {\bf L}_0(k)$ and $\phi:Y \to ({\bf G}/{\bf H})(k)$ be an initial object
in the category of bi-algebraic representations of the $(S,T_0)$ bi-space $Y$, as guaranteed by Theorem~\ref{initial}.
By the assumption that
the $(S,T_0)$ bi-action on $Y$ is not $c$-ergodic, ${\bf H}\lneq {\bf G}$.
By Proposition~\ref{T'} applied to $T=T_0$ and $T'=T_1$
we get a $k$-algebraic subgroup ${\bf L}_1<N_{\bf G}({\bf H})/{\bf H}$ and a homomorphism
$d_1:T_1\to {\bf L}_1(k)$ such that
${\bf L}_1$, ${\bf G}/{\bf H}$,
$d_1$ and $\phi$ form an initial object
in the category of bi-algebraic representations of the $S,T_1$-space $Y$.

Repeating this argument for each pair of groups $T_{i-1},T_i$ we get
$k$-algebraic subgroups ${\bf L}_i<N_{\bf G}({\bf H})/{\bf H}$ and homomorphisms
$d_i:T_i\to {\bf L}_i(k)$
satisfying for every $t\in T_i$, for almost every $y\in Y$,
$\phi(ty)=d_i(t)\phi(y)$.

Denote ${\bf N}=N_{\bf G}({\bf H})$.
This is a $k$-algebraic subgroup of ${\bf G}$.
Denote by ${\bf L}$ the algebraic subgroup generated by ${\bf L}_0,\ldots,{\bf L}_n$.
${\bf L}<{\bf N}/{\bf H}$ is a $k$-algebraic subgroup.

We also denote by ${\bf L}'<{\bf N}$ the preimage of ${\bf L}$.
${\bf L}'<{\bf N}$ is a $k$-algebraic subgroup.
We conclude that the $k$-${\bf G}$-morphism $\pi:{\bf G}/{\bf H} \to {\bf G}/ {\bf L}'$
is ${\bf L}_i$ invariant for every $i$.
Since $T$ is topologically generated by the groups $T_i$ and ${\bf L}_i=\overline{d_i(T_i)}^Z$,
it follows that $\pi\circ \phi:Y \to {\bf G}/ {\bf L}'$ factors through
$Y \ec T$ and we get a $c$-equivariant map $\psi:Y \ec T \to  {\bf G}/ {\bf L}'$.
By the assumption that the $S$-action on $Y\ec T$ is $c$-ergodic, we conclude that ${\bf L}'={\bf G}$.
Thus ${\bf G}$ normalizes $ {\bf H}$.
Since ${\bf G}$ is simple and ${\bf H}\lneq {\bf G}$ we conclude that ${\bf H}$ is trivial.
In particular ${\bf L}={\bf L}'={\bf G}$ and it acts on ${\bf G}/{\bf H}={\bf G}$ by right multiplication.

To summarize:
we have homomorphisms $d_i:T_i\to G$ and a measurable map $\phi:Y\to G$
satisfying for every $s\in S$, every $i$, every $t\in T_i$, for almost every $y\in Y$,
$\phi(sty)=\tilde{c}(s,y)\phi(y)d_i(t)^{-1}$.
The algebraic group generated by $d_0(T_0),\ldots, d_n(T_n)$ is ${\bf G}$.
Since the groups $d_i(T_i)$ preserve the support of $\phi(Y)$ we conclude that
the Zariski closure of $\phi(Y)$ is ${\bf G}$ invariant.
It follows that $\phi(Y)$ is Zariski-dense
(this is true for whatever model of $\phi$, that is disregarding any given null set of $Y$).

Consider the polynomial ring $k[{\bf G}]$ and the $k$-algebra $L^0(Y,k)$ consisting
of classes of measurable $k$-valued functions modulo null sets,
and define $\phi^*:k[{\bf G}] \to L^0(Y,k)$ by $\phi^*(p)=p\circ \phi$.
By the fact that $\phi(Y)$ is Zariski dense in ${\bf G}$ we conclude that $\phi^*$
is injective.
$\phi^*$ is $T_i$-equivariant for all $i$, where $T_i$ acts on $k[{\bf G}]$ via the right translation action of $d_i(T_i)$.
The subalgebra $\phi^*(k[{\bf G}])$ is invariant under the various groups $T_i$, hence also under the dense group in $T$ which is generated by them.
By \cite[Corollary 1.9]{borel} $k[{\bf G}]$ is a union of finite dimensional subspaces which are left and right translation invariant.
It follows that $\phi^*(k[{\bf G}])$ is $T$-invariant.
We use the injectivity of $\phi^*$ to define a $T$-action on $k[{\bf G}]$, extending the $T_i$-actions.
The left translation of $G$ gives rise to an action on $k[{\bf G}]$ which commutes with the actions of the groups $T_i$.
It follows that the action of $T$ on $k[{\bf G}]$ commutes with the left translation action of $G$.
As the group of automorphisms of the affine algebra $k[{\bf G}]$ which commute with
left translations is exactly the group of right translations by $G$, we get a homomorphism $d:T\to G$
which clearly extends the homomorphisms $d_i$.
This homomorphism is continuous, since $k[{\bf G}]$ is a union of finite dimensional right $G$ invariant subspaces.
It follows that
\[ \phi(sty)=\tilde{c}(s,y)\phi(y) d(t)^{-1}.\]
\end{proof}



\begin{theorem} \label{secthm}
Let $T$ be a locally compact second countable group.
Assume $T$ is generated as a topological group by the closed, non-compact subgroups $T_0,T_1,T_2,\ldots$
(for any finite number or countably many $T_i$'s)
such that for each $i=1,2,\ldots$ the groups $T_{i-1}$ and $T_i$ commute.
Assume moreover that $T_0$ is amenable and $T/T_0$ admits no non-trivial $T$-invariant semi-metric.
Let $X$ be a $T$-Lebesgue space with a finite invariant mixing measure.

Let $(k,|\cdot|)$ be a field with an absolute value.
Assume that as a metric space $k$ is complete and separable.
Assume either that $X$ is $T$-transitive or that the image of $|\cdot|\to [0,\infty)$ is closed.
Let $G$ be the $k$-points of an adjoint form simple algebraic group defined over $k$.
Let $c:T\times X \to G$ be a measurable cocycle.
Assume that $c$ is not cohomologous to a cocycle taking values in a proper algebraic subgroup of $G$.
Then either $c$ is cohomologous to a cocycle taking values in a bounded subgroup of $G$,
or there exists a continuous homomorphism $d:T\to G$
such that $c$ is cohomologous to $d$.
\end{theorem}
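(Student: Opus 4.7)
The plan is to dispatch the theorem via the master Theorem~\ref{main alg}, after disposing of the bounded case. If $c$ is already cohomologous to a cocycle taking values in a bounded subgroup, the first alternative of the conclusion holds; otherwise I assume this fails and aim to produce a continuous homomorphism $d : T \to G$ to which $c$ is cohomologous. To match the bi-action formalism of Section~\ref{algebraic bi-actions}, I double the group: set $S := T$ (a separate copy) and $Y := T \times X$, with the $S$-action $s \cdot (u,x) = (su, x)$ and the $T$-action $t \cdot (u,x) = (ut^{-1}, tx)$. These commute; the projections $(u,x) \mapsto ux$ and $(u,x) \mapsto x$ identify both $Y \ec T$ and $Y \ec S$ with $X$ carrying its original $T$-action, and under the first identification the cocycle $c : S \times Y \ec T \to G$ is exactly the original one.

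Of the hypotheses of Theorem~\ref{main alg}, the condition that $(Y \ec S)^j \cong X^j$ is $T_i$-ergodic for all $i$ and all $j \in \bbN$ follows because each $T_i$ is closed and non-compact in $T$, so mixing of $X$ under $T$ restricts to mixing (hence weak mixing, hence ergodicity on all powers) under each $T_i$. The $c$-ergodicity of the $S$-action on $Y \ec T$ is a restatement, via Proposition~\ref{c-ergodic-hull}, of the assumption that $c$ is not cohomologous to a cocycle in a proper algebraic subgroup. The main step, and principal obstacle, will be to show that the $(S, T_0)$-bi-action on $Y$ is not $c$-ergodic. For this I take $B := T/T_0$, which by Examples~\ref{amen subgroup} and~\ref{example} is an amenable and metrically ergodic $T$-space, and apply Theorem~\ref{nontrivial} (this is where the hypothesis that $X$ is $T$-transitive or that the image of $|\cdot|$ is closed enters) to obtain a proper $k$-algebraic subgroup ${\bf H} \lneq {\bf G}$ and a measurable map $\varphi : B \times X \to ({\bf G}/{\bf H})(k)$ that is equivariant with respect to $f(t,b,x) = c(t,x)$. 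I then define $\Phi : Y \to ({\bf G}/{\bf H})(k)$ by $\Phi(u, x) := \varphi(uT_0, ux)$. For $t_0 \in T_0$, both $uT_0$ and $ux$ are invariant under $(u,x) \mapsto (ut_0^{-1}, t_0 x)$, so $\Phi$ is $T_0$-invariant; the $f$-equivariance of $\varphi$ yields $\Phi(su, x) = \varphi(suT_0, sux) = c(s, ux)\Phi(u, x) = \tilde{c}(s, (u, x))\Phi(u, x)$. Thus $\Phi$ is a coset bi-algebraic representation with trivial ${\bf L}$, and since ${\bf G}$ is adjoint simple with ${\bf H} \lneq {\bf G}$, the coset space ${\bf G}/{\bf H}$ contains no ${\bf G}$-fixed point, which witnesses the required non-$c$-ergodicity.

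Applying Theorem~\ref{main alg} then produces a continuous homomorphism $d : T \to G$ and a measurable $\phi : Y \to G$ satisfying $\phi(sty) = \tilde{c}(s, y)\phi(y) d(t)^{-1}$. Specializing at $y = (e, x)$ and setting $\psi(x) := \phi(e, x)$, the relation with $t = e$ gives $\phi(s, x) = c(s, x)\psi(x)$, while the relation with $s = e$ gives $\phi(t^{-1}, tx) = \psi(x)d(t)^{-1}$. Applying the former with $s = t^{-1}$ and $x$ replaced by $tx$ and invoking the cocycle identity $c(t^{-1}, tx) = c(t, x)^{-1}$ yields $c(t, x)\psi(x) = \psi(tx)d(t)$, which is exactly the statement that $c$ is cohomologous to $d$. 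This completes the proof.
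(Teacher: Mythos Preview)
Your proof follows the same route as the paper's: you introduce the doubling $Y = T \times X$ with commuting $S$- and $T$-actions (your parametrization is the paper's under the change of variables $(u,x) \leftrightarrow (u, ux)$), verify non-$c$-ergodicity of the $(S,T_0)$ bi-action via Theorem~\ref{nontrivial} with $B = T/T_0$, and conclude by Theorem~\ref{main alg}.

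The one imprecision is in your final paragraph. When $T$ is non-discrete, $\{e\} \times X$ is a null set in $Y$, so ``$\psi(x) := \phi(e,x)$'' is not determined by the a.e.\ class of $\phi$, and the relations you derive by plugging in $y=(e,x)$ need not hold. The fix is immediate: from the $T$-equivariance $\phi(ut^{-1}, tx) = \phi(u,x)d(t)^{-1}$ (valid for all $t$ and a.e.\ $(u,x)$) one checks that $(u,x) \mapsto \phi(u,x)d(u)^{-1}$ depends a.e.\ only on $ux$, hence descends to a measurable $\psi : X \to G$; substituting $\phi(u,x)=\psi(ux)d(u)$ into the $S$-equivariance $\phi(su,x)=c(s,ux)\phi(u,x)$ then yields $c(s,z)\psi(z)=\psi(sz)d(s)$ as desired.
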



\begin{proof} 
We let $S=T$, $Y=T\times X$ and endow $Y$ with the $S\times T$ action
where $(s,t)$ acts by $(t',x) \mapsto (st't^{-1},sx)$.
Then $Y\ec S\simeq X$ as a $T$ space and by the mixing assumption,
$(Y \ec S)^j$ is $T_i$ ergodic for every $i=0,1,2,\ldots$ and every $j\in \bbN$.
Also $Y\ec T\simeq X$ as an $S$-space and $c:S\times Y\ec T\to G$ is a measurable cocycle
such that the $S$ action on $Y\ec T$ is $c$-ergodic.
The theorem will follow from
Theorem~\ref{main alg}
once we show that the $(S,T_0)$ bi-action on $Y$ is not $c$-ergodic.

We let $B=T/T_0$.
By the Examples \ref{amen subgroup} and \ref{example}, $B$ is an amenable and metrically ergodic $T$-Lebesgue space.
By Theorem~\ref{nontrivial}
the $T$-action on $B\times X$ is not $f$-ergodic,
where $f:T\times B\times X\to G$ is defined by $f(t,b,x)=c(t,x)$.
That is, there exists a proper $k$-algebraic subgroup ${\bf H}\lneq {\bf G}$ and a measurable map
$\phi:B\times X \to {\bf G}/{\bf H}(k)$ satisfying for every $t\in T$, a.e.\ $(b,x)\in  B\times X$, $\psi(tb,tx)=c(t,x)\psi(b,x)$.

Setting ${\bf L}=\{e\}<N_{\bf G}({\bf H})/{\bf H}$, ${\bf V}={\bf G}/{\bf H}$, $d:T_0 \to {\bf L}(k)=\{e\}$ be the trivial map
and $\phi=\psi\circ \theta$, where $\theta:T\times X\to T/T_0\times X$ is the obvious map,
we obtain a non-trivial coset bi-algebraic representation of the $(S,T_0)$ bi-action on $Y$ via the cocycle $c$,
thus indeed, the $(S,T_0)$ bi-action on $Y$ is not $c$-ergodic.
\end{proof}




\begin{theorem}[Zimmer super-rigidity for arbitrary fields] \label{zimmercor}
Let $l$ be a local field.
Let $T$ to be the $l$-points of a connected almost-simple algebraic group defined over $l$.
Assume that the $l$-rank of $T$ is at least two.
Let $X$ be a $T$-Lebesgue space with finite invariant ergodic measure.

Let $(k,|\cdot|)$ be a field with an absolute value.
Assume that the image of $|\cdot|\to [0,\infty)$ is closed and that as a metric space $k$ is complete and separable.
Let $G$ be the $k$-points of an adjoint form simple algebraic group defined over $k$.
Let $c:T\times X \to G$ be a measurable cocycle.
Assume that $c$ is not cohomologous to a cocycle taking values in a proper algebraic subgroup of $G$.
Then either $c$ is cohomologous to a cocycle taking values in a bounded subgroup of $G$,
or there exists a continuous homomorphism $d:T\to G$
such that $c$ is cohomologous to $d$.
\end{theorem}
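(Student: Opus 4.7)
The plan is to deduce the theorem from Theorem~\ref{main alg}, in parallel to the proof of Theorem~\ref{secthm} but replacing the requirement that ``$T/T_0$ admits no non-trivial $T$-invariant semi-metric'' by a boundary argument run on a minimal parabolic. First, if $c$ is already cohomologous to a cocycle taking values in a bounded subgroup of $G$ then the first alternative of the conclusion holds and we are done, so from now on I assume that $c$ is cohomologous neither to a cocycle into a proper algebraic subgroup of $G$ nor to one into a bounded subgroup.

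Next I would exploit the hypothesis that the $l$-rank of ${\bf T}$ is at least two to build a chain of commuting subgroups generating $T$. Fix a maximal $l$-split torus ${\bf A}<{\bf T}$ with root system $\Phi$, a minimal parabolic ${\bf P}\supset{\bf A}$, and write $T_\alpha={\bf U}_\alpha(l)$ for each $\alpha\in\Phi$. By the Chevalley commutator formula, $T_\alpha$ and $T_\beta$ commute whenever $\alpha+\beta\notin\Phi\cup\{0\}$; since the rank of $\Phi$ is at least two, the graph on $\Phi$ whose edges are such commuting pairs is connected, so one can enumerate a finite sequence $\alpha_0,\alpha_1,\ldots,\alpha_n$ exhausting $\Phi$ with $\alpha_0$ positive and every consecutive pair forming an edge. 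Setting $T_i=T_{\alpha_i}$, the $T_i$ are closed non-compact subgroups that topologically generate the almost-simple group $T$, consecutive $T_{i-1}, T_i$ commute, and $T_0\subset P:={\bf P}(l)$.

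With the chain in hand I would set $S=T$ and $Y=T\times X$ with the $(S\times T)$-action $(s,t)\cdot(t',x)=(st't^{-1},sx)$ as in the proof of Theorem~\ref{secthm}, so that $Y\ec T\simeq X$ as $S$-space and the pulled-back cocycle is $\tilde{c}(s,(t',x))=c(s,x)$. By the Howe--Moore theorem the $T$-action on $X$ is mixing, and consequently every non-compact closed subgroup---in particular each $T_i$---acts ergodically on each power $X^j=(Y\ec S)^j$. Proposition~\ref{c-ergodic-hull} turns the assumption on $c$ into $c$-ergodicity of the $S$-action on $Y\ec T$. The remaining and crucial hypothesis of Theorem~\ref{main alg}---non-$c$-ergodicity of the $(S,T_0)$-bi-action on $Y$---would be extracted via Theorem~\ref{nontrivial}: since $B=T/P$ is amenable and metrically ergodic ($P$ being a minimal parabolic in a simple higher-rank group) and the image of $|\cdot|$ is closed by hypothesis, Theorem~\ref{nontrivial} yields a proper algebraic subgroup ${\bf H}\lneq{\bf G}$ and a $c$-equivariant measurable map $T/P\times X\to{\bf G}/{\bf H}(k)$. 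Pulling it back along the $T$-equivariant projection $T/T_0\times X\to T/P\times X$---available because $T_0\subset P$---produces a non-trivial coset bi-algebraic representation of the $(S,T_0)$-bi-action (with trivial group ${\bf L}=\{e\}$ and trivial $d$), witnessing that this bi-action is not $c$-ergodic.

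Theorem~\ref{main alg} then supplies a continuous homomorphism $d:T\to G$ and a measurable $\phi:Y\to G$ satisfying $\phi(sty)=\tilde{c}(s,y)\phi(y)d(t)^{-1}$ for all $s\in S$, $t\in T$ and a.e.\ $y\in Y$. Setting $\psi(x)=\phi(e,x)$ and specializing first to $s=e$ (which yields $\phi(t',x)=\psi(x)d(t')$) and then to $t=e$, one reads off $c(s,x)\psi(x)=\psi(sx)d(s)$, exactly the desired cohomology between $c$ and $d$. The main obstacle is the construction of the commuting chain with $T_0\subset P$ topologically generating $T$; this is precisely the content of the remark in the introduction that, in the simple higher-rank setting, generation by successively commuting subgroups coincides with the classical higher-rank property, and it is what allows the abstract machinery of Theorem~\ref{main alg} to absorb the Margulis--Zimmer rank hypothesis.
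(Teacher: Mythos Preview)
Your overall architecture---set up $Y=T\times X$, use Howe--Moore for mixing, produce a boundary map to kill $c$-ergodicity of the $(S,T_0)$-bi-action, then invoke Theorem~\ref{main alg}---is exactly the route the paper takes (packaged there as Theorem~\ref{secthm}). The paper's boundary is $T/A_0$ for a one-dimensional split subtorus $A_0$, while you use $T/P$; both are amenable and metrically ergodic, so your variant is fine.

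The genuine gap is in your chain. You take $T_i={\bf U}_{\alpha_i}(l)$ and assert that these root subgroups topologically generate $T$. In general they generate only $T^+$, the subgroup generated by unipotent radicals of $l$-parabolics, and $T^+$ is an \emph{open} normal subgroup of $T$ with $T/T^+$ finite but possibly non-trivial when ${\bf T}$ is not simply connected (e.g.\ ${\bf T}=\PGL_3$ over $\bbQ_p$ with $p\equiv 1\pmod 3$). Since $T^+$ is open it is certainly not dense in that case, so the hypothesis of Theorem~\ref{main alg} that $T$ be topologically generated by the $T_i$ fails. (There is also a minor inaccuracy: $[U_\alpha,U_\beta]=1$ requires $i\alpha+j\beta\notin\Phi$ for all $i,j\ge 1$, not merely $\alpha+\beta\notin\Phi\cup\{0\}$; this matters in types $B_2$, $G_2$, $BC_n$.)

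The paper avoids this by invoking \cite[Proposition~1.2.2]{margulis-book}: there exist one-dimensional $l$-split subtori $A_0,\dots,A_n<A$ with $T=Z_T(A_0)\cdots Z_T(A_n)$ as a \emph{set-theoretic product}, and the chain is built from the pattern $A_0, Z_T(A_0), A_0, A_1, Z_T(A_1), A_1,\dots$, where each $A_i$ is abelian (hence amenable, and $T/A_0$ carries no invariant semi-metric by \cite{BG}), consecutive terms commute by construction, and the centralizers $Z_T(A_i)$ together genuinely generate all of $T$, not just $T^+$. Replacing your root-group chain by this torus/centralizer chain repairs the argument; the rest of your proof then goes through.
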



\begin{proof}
By Howe-Moore theorem the action of $T$ on $X$ is mixing.
We let  $A<T$ be a maximal $l$-split torus.
By \cite[Proposition~1.2.2]{margulis-book}
there exists a positive integer $n$ and 1-dimensional subtori $A_0,\ldots,A_n<A$ such that
$T=Z_T(A_0)Z_T(A_n)\cdots Z_T(A_n)$.
We set $T_0=A_0$, $T_1=Z_T(A_0)$, $T_2=A_0$, $T_3=A_1$, $T_4=Z_T(A_1)$, $T_5=A_1$, $T_6=A_2$,
... $T_{3n}=A_n$, $T_{3n+1}=Z_T(A_n)$.
Then
$T$ is generated as a topological group by the closed, non-compact subgroups $T_0,T_1,T_2,\ldots$
and for each $i=1,2,\ldots$ the groups $T_{i-1}$ and $T_i$ commute.
Clearly $T_0$ is amenable and by \cite{BG}  $T/T_0$ admits no non-trivial $T$-invariant semi-metric,
as $T_0$ is not precompact.
The corollary now follows from Theorem~\ref{secthm}.
\end{proof}



\begin{theorem} \label{lattice}
Let $T$ be a locally compact second countable group.
Assume $T$ is generated as a topological group by the closed, non-compact subgroups $T_0,T_1,T_2,\ldots$
(for any finite number or countably many $T_i$'s)
such that for each $i=1,2,\ldots$ the groups $T_{i-1}$ and $T_i$ commute.
Assume moreover that $T_0$ is amenable and $T/T_0$ admits no non-trivial $T$-invariant semi-metric.
Let $\Gamma<T$ be a closed subgroup
such that $T/\Gamma$ has a finite $T$-invariant mixing measure.

Let $k$ be a field with an absolute value.
Assume that as a metric space $k$ is complete.
Let $G$ be the $k$-points of an adjoint form simple algebraic group defined over $k$.
Let $\delta:\Gamma \to G$ be a homomorphism.
Assume $\delta(\Gamma)$ is Zariski dense and unbounded.
Then there exists a continuous homomorphism $d:T\to G$
such that $\delta=d|_{\Gamma}$.
\end{theorem}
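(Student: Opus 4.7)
The plan is to reduce Theorem~\ref{lattice} to Theorem~\ref{secthm} via the standard cocycle construction, exactly parallel to how Theorem~\ref{thm:lattice} was deduced from Theorem~\ref{prod-cocycle}.

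First I would perform the same preliminary reduction as in the proof of Theorem~\ref{thm:lattice}: replace $k$ by the closure of a finitely generated extension containing $\delta(\Gamma)$ and over which $\mathbf{G}$ is defined, so that we may assume $(k,|\cdot|)$ is separable and complete. This puts us in the setting of Theorem~\ref{secthm}. Next, set $X=T/\Gamma$ with its finite invariant mixing measure, choose a Borel section $\sigma:X\to T$ as in Example~\ref{stand}, and form the standard cocycle $\kappa:T\times X\to\Gamma$; then consider $c=\delta\circ\kappa:T\times X\to G$.

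The core of the argument is to verify that $c$ meets the hypotheses of Theorem~\ref{secthm}. Note that $X=T/\Gamma$ is $T$-transitive, so the alternative hypothesis about the image of $|\cdot|$ being closed is not needed. I would apply Proposition~\ref{Gamma-c-ergodic} in the contrapositive: if $\delta\kappa$ were cohomologous to a cocycle taking values in a proper $k$-algebraic subgroup $\mathbf{H}\lneq\mathbf{G}$, then a conjugate of $\delta(\Gamma)$ would lie in $\mathbf{H}(k)$, contradicting Zariski density; similarly, if $\delta\kappa$ were cohomologous to a cocycle taking values in a bounded subgroup $K<G$, then a conjugate of $\delta(\Gamma)$ would be contained in $K$, contradicting the unboundedness assumption. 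Thus $c$ is not cohomologous to a cocycle into a proper algebraic subgroup, and consequently, by Theorem~\ref{secthm}, either $c$ is cohomologous to a cocycle into a bounded subgroup (which we have just ruled out) or there exists a continuous homomorphism $d:T\to G$ with $\delta\kappa$ cohomologous to $d$.

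Finally, the second alternative holds, and I would invoke Proposition~\ref{lattice-vs-cocycle} directly: it produces $g\in G$ with $\delta=d^g|_\Gamma$, and since $d^g$ is again a continuous homomorphism $T\to G$, we may replace $d$ by $d^g$ to conclude $\delta=d|_\Gamma$. The argument is essentially an application of machinery already assembled; the only nontrivial point to double-check is that the transitivity of $X=T/\Gamma$ genuinely obviates the ``closed image of $|\cdot|$'' hypothesis in Theorem~\ref{secthm}, which is exactly the flexibility built into its statement, so no real obstacle arises.
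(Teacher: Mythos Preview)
Your proposal is correct and follows essentially the same route as the paper: reduce to separable $k$, form the standard cocycle $c=\delta\kappa$ on $X=T/\Gamma$, use Proposition~\ref{Gamma-c-ergodic} to rule out cohomology into proper algebraic or bounded subgroups, apply Theorem~\ref{secthm}, and finish with Proposition~\ref{lattice-vs-cocycle}. Your explicit remark that transitivity of $X=T/\Gamma$ covers the alternative hypothesis in Theorem~\ref{secthm} is a welcome clarification that the paper leaves implicit.
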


\begin{proof} 
First observe that there exists a subfield $k'<k$ which is separable and complete, ${\bf G}$ is defined over $k'$ and $\delta(\Gamma)<{\bf G}(k')$.
Indeed, take $k'$ to be the closure of a finite extension of the subfield generated by $\delta(\Gamma)$ over which ${\bf G}$ is defined.
Replacing $k$ by $k'$ we may assume $k$ is separable.
We do so.

We let $\kappa$ be as in Example~\ref{stand} and consider the cocycle $c=\delta\kappa$.
By Proposition~\ref{Gamma-c-ergodic},
$c$ is not cohomologous to a cocycle taking values in a proper algebraic subgroup or a bounded subgroup.
It follows by Theorem~\ref{secthm} that
there exists 
a continuous homomorphism $d:T\to G$
such that $c$ is cohomologous to $d$.
By Proposition~\ref{lattice-vs-cocycle} $\delta$ extends to a homomorphism $T\to G$.
\end{proof}

\begin{proof}[Proof of Theorem~\ref{marguliscor}]
Theorem~\ref{marguliscor} follows from Theorem~\ref{lattice}
exactly in the same fashion that Corollary~\ref{zimmercor}]
follows from Theorem~\ref{secthm}.
\end{proof}

\section{Preliminaries on proper actions} \label{proper prelim}

In this section
we fix a Polish group $G$.
we will discuss the category of proper actions of $G$.
A proper action of $G$ is a Polish space $V$ endowed with an action of $G$
such that the map
\[ G\times V \to V\times V, \quad (g,v) \mapsto (v,gv) \]
is continuous and proper.
The properness assumption is equivalent to the assumption that for every precompact subsets $C_1,C_2\subset V$, the set $\{g~|~gC_1\cap C_2\neq\emptyset\}$
is precompact in $G$.

\begin{prop} \label{closed orbits}
If $V$ is a proper $G$ space than the $G$ orbits in $V$ are closed and the stabilizers are compact.
Furthermore, the orbit space $V/G$ is Hausdorff and
for every $v\in V$ the orbit map $G/G_v\to Gv$ is a homeomorphism.
\end{prop}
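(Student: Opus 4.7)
The plan is to squeeze everything out of a single observation: the map
\[ \mu\colon G\times V \to V\times V,\quad (g,v)\mapsto (v,gv) \]
is proper and continuous between Polish (hence metrizable) spaces, so $\mu$ is automatically a closed map. This follows by the standard sequential argument: if $y_n\to y$ lies in the closure of $\mu(A)$ for a closed $A\subseteq G\times V$, lift to $x_n\in A$ with $\mu(x_n)=y_n$; the set $K=\{y_n\}\cup\{y\}$ is compact, so $\mu^{-1}(K)$ is compact, and any subsequential limit $x_{n_k}\to x$ lies in $A$ and satisfies $\mu(x)=y$.

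From closedness of $\mu$, three of the four claims fall out immediately. First, $G_v$ is compact because $G_v\times\{v\}=\mu^{-1}(v,v)$ is the preimage of a point under the proper map $\mu$. Second, $Gv$ is closed in $V$ because $\{v\}\times Gv = \mu(G\times\{v\})$ is the image of a closed set under the closed map $\mu$, hence closed in $V\times V$. Third, the orbit equivalence relation
\[ R=\{(v,w)\in V\times V : w\in Gv\} = \mu(G\times V) \]
is closed in $V\times V$. The quotient map $q\colon V\to V/G$ is open as for any group action (since $q^{-1}(q(U))=GU$ is a union of open translates). An open quotient by a closed equivalence relation is Hausdorff, so $V/G$ is Hausdorff.

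For the final claim, the orbit map $\bar\alpha\colon G/G_v\to Gv$ is a continuous $G$-equivariant bijection. Both sides are Polish: $G/G_v$ by the Sierpinski theorem recalled in Section~\ref{alg perlim} (applied to the open map $G\to G/G_v$), and $Gv$ as a closed subspace of the Polish space $V$. To see $\bar\alpha$ is a homeomorphism, I show its inverse is sequentially continuous. If $g_nv\to gv$ in $Gv$, then the set $\{(v,g_nv)\}\cup\{(v,gv)\}$ is compact, so by properness $\{(g_n,v)\}$ lies in a compact subset of $G\times V$; any subsequential limit $g_{n_k}\to h$ satisfies $hv=gv$, so $hG_v=gG_v$, whence $g_{n_k}G_v\to gG_v$ in $G/G_v$. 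The standard subsequence-of-subsequence argument upgrades this to full convergence. Alternatively, $\bar\alpha$ transports the Polish topology on $Gv$ to a $G$-invariant Polish topology on $G/G_v$, which by Effros' theorem (Section~\ref{alg perlim}) must coincide with the quotient topology.

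The only nontrivial point is the passage from properness to closedness of $\mu$; the rest is formal.
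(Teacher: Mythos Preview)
Your proof is correct and follows essentially the same route as the paper: both hinge on the observation that a proper map into a metrizable space is closed, use this to see that the orbit equivalence relation $\mu(G\times V)$ is closed in $V\times V$ and hence that $V/G$ is Hausdorff, and invoke Effros' theorem for the homeomorphism $G/G_v\to Gv$. You supply more detail than the paper (the explicit sequential argument for closedness, a direct proof that orbits are closed, and an alternative hands-on proof of the homeomorphism), but the architecture is the same.
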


\begin{proof}
The fact that the stabilizers are compact follows at once from the definitions (taking $C_1=C_2$, a singleton),
noting that stabilizers are closed.
All remaining parts of the proposition follow from the fact that $V/G$ is Hausdorff
(the last statement follows from Effros theorem, c.f Proposition~\ref{polishing} and the discussion preceding it).
This is a standard fact, which we now briefly recall:
any proper map into a metrizable space is closed, thus the set
$V\times V-\mbox{Im}(G\times V)$ is open in $V\times V$
and its image under the open map to $V/G\times V/G$ is again open
- thus its complement, i.e the diagonal, in the latter is closed.
\end{proof}

The following is a direct corollary of Proposition~\ref{coset contraction-p}.

\begin{prop} \label{coset proper}
Let $T$ be an lcsc group and $X$ an ergodic $T$-Lebesgue space.
Let $c:T\times X\to G$ be a measurable cocycle.
Let $V$ be a proper $G$ action
 and $\phi:X \to V$ a measurable map which is $c$-equivariant.
Then there exists a compact subgroup $K<G$,
a continuous map $i:G/K \to V$
and a $c$-equivariant measurable map $\psi:X\to G/K$
such that $\phi=i\circ \psi$.
\end{prop}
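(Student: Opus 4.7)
The plan is to deduce this directly from Proposition~\ref{coset contraction-p}, using Proposition~\ref{closed orbits} to verify its hypotheses in the proper-action setting.

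First I would check that Proposition~\ref{coset contraction-p} applies to the data $(T, X, G, V, c, \phi)$. The space $V$ is Polish by definition of a proper $G$-action, $G$ is Polish, and its action on $V$ is continuous. The only remaining hypothesis to verify is that every $G$-orbit in $V$ is locally closed. But Proposition~\ref{closed orbits} tells us that for a proper action the $G$-orbits are in fact closed in $V$, so a fortiori locally closed.

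Applying Proposition~\ref{coset contraction-p} then produces a point $v \in V$, with stabilizer $H = G_v < G$, a $c$-equivariant measurable map $\psi \in L^0(X, G/H)$, and a factorization $\phi = i \circ \psi$, where $i : G/H \to Gv \subset V$ is the orbit map $gH \mapsto gv$. By Proposition~\ref{closed orbits} again, the stabilizer $H$ of a point under a proper action is compact, so setting $K = H$ we obtain a compact subgroup of $G$ with $\psi : X \to G/K$ as required. The orbit map $i : G/K \to V$ is continuous (it is the composition of the quotient map $G \to G/K$ followed by $g \mapsto gv$), and by Proposition~\ref{closed orbits} (or by Effros' theorem, as in Proposition~\ref{polishing}) it is even a homeomorphism onto the closed orbit $Gv$.

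There is essentially no obstacle here: the whole point is that the properness hypothesis is strictly stronger than what Proposition~\ref{coset contraction-p} needs (closed orbits with compact stabilizers, versus merely locally closed orbits with arbitrary closed stabilizers), and the conclusion simply inherits the extra compactness of the stabilizer. The only mild point to note is that we replace the codomain $G/H$ by the concrete target $V$ via $i$, which is legitimate because the factorization $\phi = i\circ\psi$ is already built into the conclusion of Proposition~\ref{coset contraction-p}.
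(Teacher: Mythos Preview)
Your proposal is correct and matches the paper's approach exactly: the paper simply states that this is a direct corollary of Proposition~\ref{coset contraction-p}, and your write-up is precisely the unpacking of that remark, using Proposition~\ref{closed orbits} to verify the locally-closed-orbits hypothesis and to obtain compactness of the stabilizer.
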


The following proposition is an analogue of Proposition~\ref{mixing}.

\begin{prop} \label{mixing proper}
Let $V$ be a proper $G$ action.
Let $T$ be an lcsc group and $d:T\to G$ a continuous homomorphism with dense image.
Let $X$ be a $T$-Lebesgue space such that
$X^2$ is $T$-ergodic.
Then any equivariant measurable map $\phi:X \to V$ is essentially constant and its essential image is a $G$-fixed point.
\end{prop}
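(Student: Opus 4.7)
The plan is to emulate Proposition~\ref{mixing}, replacing its dimension/Zariski-closure arguments with the Hausdorff-quotient data furnished by properness. First note that $T$-ergodicity of $X^2$ trivially implies $T$-ergodicity of $X$: a $T$-invariant $A\subset X$ of intermediate measure would give $A\times X\subset X^2$ of intermediate measure and still $T$-invariant. I would then apply Proposition~\ref{coset proper} to $\phi$, with cocycle $(t,x)\mapsto d(t)$, to obtain a compact subgroup $K<G$, a continuous $G$-equivariant map $i:G/K\to V$ whose image is a single $G$-orbit, and a $d$-equivariant measurable map $\psi:X\to G/K$ with $\phi=i\circ\psi$. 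It thus suffices to show that $\psi$ is essentially constant with value $G$-fixed (equivalently, that $K=G$).

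The central device is a $T$-invariant ``pair invariant'' on $X^2$. Picking a measurable lift $h:X\to G$ of $\psi$ (so $\psi(x)=h_xK$), define
\[
  \beta:X^2\to K\bs G/K,\qquad \beta(x_1,x_2)=K h_{x_1}^{-1}h_{x_2}K.
\]
This is independent of the lift (a change $h_x\mapsto h_xk_x$ with $k_x\in K$ is absorbed in the outer $K$'s), and Borel because $K\bs G/K$ is a Hausdorff Polish space, being the quotient of the Polish space $G$ by the proper right action of the compact group $K\times K$ via $(k_1,k_2)\cdot g = k_1gk_2^{-1}$. The $d$-equivariance of $\psi$ makes $\beta$ $T$-invariant, since the factor $d(t)$ cancels in the product $h_{x_1}^{-1}h_{x_2}$:
\[
  \beta(tx_1,tx_2)=K(d(t)h_{x_1})^{-1}d(t)h_{x_2}K=\beta(x_1,x_2).
\]
By $T$-ergodicity of $X^2$, $\beta$ is essentially constant, equal to some double coset $Kg_0K$.

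The main step is to deduce $g_0\in K$. By Fubini, for a.e.\ $x_1$ the set $\{x_2:\psi(x_2)\in\psi(x_1)\cdot Kg_0K/K\}$ is conull, so the closed subset $\psi(x_1)\cdot Kg_0K/K\subset G/K$ contains the essential image of $\psi$. Since $\psi(x_1)$ itself lies in the essential image for a.e.\ $x_1$, one gets $\psi(x_1)\in\psi(x_1)\cdot Kg_0K/K$ for a.e.\ $x_1$; an elementary coset computation (the condition amounts to $K\cap Kg_0K\neq\emptyset$) turns this into $g_0\in K$, which is independent of $x_1$. Hence $Kg_0K=K$, so $h_{x_1}^{-1}h_{x_2}\in K$ almost surely, i.e.\ $\psi$ is essentially constant with some value $v_0$. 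Then $d(T)\subset\Stab_G(v_0)$ by $d$-equivariance, and since stabilizers in a proper action are closed (Proposition~\ref{closed orbits}) and $d(T)$ is dense in $G$, $v_0$ (hence $i(v_0)=\phi$) is $G$-fixed.

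The main obstacle is identifying the correct $T$-invariant quantity. The more naive candidate $[\psi(x_1),\psi(x_2)]\in (G/K\times G/K)/G$ (which is Hausdorff since the diagonal $G$-action is again proper) only localizes $\psi\times\psi$ to a single diagonal-$G$-orbit $G\cdot(eK,g_0K)$, after which one still has to force $g_0\in K$, and from this viewpoint it is not at all obvious how. The double-coset formulation $\beta=Kh_{x_1}^{-1}h_{x_2}K$ packages exactly the same information, but makes the Fubini/essential-image argument work directly, because the essential image of $\psi$ must sit in the translate $\psi(x_1)\cdot Kg_0K/K$ through each of its own generic points $\psi(x_1)$.
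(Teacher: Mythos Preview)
Your proof is correct. Both you and the paper begin by applying Proposition~\ref{coset proper} to reduce to $\psi:X\to G/K$ and then use ergodicity of $X^2$ to pin the image of $\psi\times\psi$ down to a single diagonal $G$-orbit in $(G/K)^2$; your double-coset invariant $\beta$ is precisely the orbit map for this diagonal action, so up to this point the two arguments coincide in content.

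The genuine difference is in how you force $K=G$ (equivalently $g_0\in K$). The paper observes that the support of $(\psi\times\psi)_*(\mu\times\mu)$ is the product of supports, each $d(T)$-invariant and hence, by density, $G$-invariant---so the support is all of $(G/K)^2$; since this support is also contained in a single closed diagonal orbit (by a second application of Proposition~\ref{coset proper}), that orbit is $(G/K)^2$, and a transitive diagonal action on $(G/K)^2$ forces $G=K$. Your route bypasses the support/density step entirely: via Fubini you trap the essential image of $\psi$ inside the compact set $h_{x_1}Kg_0K/K$, then note that a generic $\psi(x_1)$ lies in its own trap, forcing $g_0\in K$ directly. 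Density of $d(T)$ enters your argument only at the very end, to upgrade ``$d(T)$-fixed'' to ``$G$-fixed''. So your argument is marginally more elementary in what it uses, while the paper's is terser once one is willing to invoke Proposition~\ref{coset proper} a second time on the product.

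One cosmetic point: in your displayed computation of $\beta(tx_1,tx_2)$ you silently replace $h_{tx_i}$ by $d(t)h_{x_i}$, which only holds modulo right multiplication by $K$; since you have already established lift-independence of $\beta$, the substitution is legitimate, but it is worth saying so explicitly.
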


\begin{proof}
We let $K<G$, $i:G/K\to V$ and $\psi:X\to G/K$
be as guaranteed by Proposition~\ref{coset proper}.
We will show that $G=K$, which clearly implies the proposition.
The support of $\psi^2(X^2)$ in $(G/K)^2$ is $d(T)^2$-invariant, thus also $G^2$-invariant.
It follows that the support is $(G/K)^2$.
But by another use of Proposition~\ref{coset proper}, $\psi^2(X^2)$
is supported on a single orbit, which is closed by Proposition~\ref{closed orbits}.
It follows that $(G/K)^2$ consists of only one orbit,
thus indeed $G=K$.
\end{proof}
We end this section with the following technical lemma which will be needed in the proof of Theorem~\ref{initial proper}
below.

\begin{lemma} \label{kappa}
Let $K$ be a second countable compact group.
Let $\kappa$ be an ordinal and assume we are given for every ordinal $\alpha<\kappa$
a $K$-unitary representation $H_\alpha$ and a $K$-unitary injection $H_\alpha\to L^2(K)$.
Assume also that for every $\alpha<\beta<\kappa$ there exists a $K$-unitary injection which is not an isomorphism $H_\alpha\to H_\beta$
(we do not assume any compatibility of these maps).
Then $\kappa$ is a countable ordinal.
\end{lemma}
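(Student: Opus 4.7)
My plan is to reduce the statement to a purely combinatorial fact about well-ordered chains in a countable product of finite totally ordered sets, using Peter--Weyl to parametrize $K$-subrepresentations of $L^2(K)$.

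\smallskip

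First, I would invoke the Peter--Weyl theorem for the second countable compact group $K$ to decompose $L^2(K) \simeq \bigoplus_{\pi \in \hat{K}} V_\pi \otimes V_\pi^*$, where the unitary dual $\hat{K}$ is \emph{countable}. Each isotypic component of $L^2(K)$ is the finite-dimensional space $V_\pi^{\oplus \dim V_\pi}$. Since every $H_\alpha$ is a $K$-subrepresentation of $L^2(K)$ (via the chosen injection), it decomposes as $H_\alpha = \bigoplus_{\pi \in \hat{K}} H_\alpha^\pi$ with each $H_\alpha^\pi$ finite-dimensional, and $H_\alpha$ is completely determined \emph{as an abstract} $K$-representation by its multiplicity function
\[ m_\alpha : \hat{K} \to \bbZ_{\geq 0}, \qquad m_\alpha(\pi) \leq \dim V_\pi. \]

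\smallskip

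Next, I would translate the hypothesis on injections into an order relation on the $m_\alpha$'s. Any $K$-equivariant map $H_\alpha \to H_\beta$ preserves isotypic decomposition (Schur's lemma), so restricts to maps between the finite-dimensional pieces $H_\alpha^\pi \to H_\beta^\pi$. Existence of a $K$-equivariant injection $H_\alpha \to H_\beta$ is therefore equivalent to $m_\alpha \leq m_\beta$ pointwise. Crucially, because the isotypic components are finite-dimensional, whenever $m_\alpha = m_\beta$ \emph{every} $K$-equivariant injection is automatically a $K$-equivariant isomorphism (an injective linear map between finite-dimensional spaces of equal dimension is surjective). So the hypothesis that for $\alpha < \beta$ there exists a $K$-equivariant injection which is \emph{not} an isomorphism forces $m_\alpha \leq m_\beta$ pointwise with $m_\alpha(\pi) < m_\beta(\pi)$ for at least one $\pi \in \hat{K}$.

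\smallskip

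Finally, I would run a bookkeeping argument on successor ordinals. For each successor $\alpha+1 < \kappa$, choose some $\pi_\alpha \in \hat{K}$ with $m_\alpha(\pi_\alpha) < m_{\alpha+1}(\pi_\alpha)$, and set $k_\alpha = m_\alpha(\pi_\alpha) \in \bbZ_{\geq 0}$. I claim the assignment $\alpha \mapsto (\pi_\alpha, k_\alpha)$ is injective on the set of successor ordinals below $\kappa$: if $\alpha < \beta$ were two such ordinals with the same image, then $m_{\alpha+1} \leq m_\beta$ gives
\[ m_\beta(\pi_\alpha) \geq m_{\alpha+1}(\pi_\alpha) > k_\alpha = k_\beta = m_\beta(\pi_\beta) = m_\beta(\pi_\alpha), \]
a contradiction. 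Since $\hat{K} \times \bbZ_{\geq 0}$ is countable, there are only countably many successor ordinals below $\kappa$. But any uncountable ordinal contains $\omega_1$ as an initial segment, and $\omega_1$ already contains $\aleph_1$ successor ordinals; so $\kappa$ must be countable.

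\smallskip

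The only non-routine point is the automatic-isomorphism step, where one must verify that a $K$-equivariant injection between representations with finite multiplicities is surjective as soon as the multiplicities agree; everything else is just Peter--Weyl plus a straightforward cardinality count on successor ordinals.
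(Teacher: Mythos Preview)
Your argument is correct and follows essentially the same route as the paper: both invoke Peter--Weyl to pass to monotone multiplicity functions $m_\alpha:\hat K\to\{0,\dots,d_\pi\}$ with countable index set and finite range, and then finish with a countability argument. The paper's endgame is phrased slightly differently---it sets $\alpha_\pi$ to be the first ordinal at which $\pi$ attains its maximal multiplicity and bounds $\kappa$ by $\sup_\pi\alpha_\pi+1$---whereas your injection of successor ordinals into $\hat K\times\bbZ_{\ge 0}$ is an equally valid (and arguably more transparent) way to conclude.
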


\begin{proof}
By Peter-Weyl Theorem, $L^2(K)\simeq \hat{\oplus} \pi^{d_\pi}$ where the sum runs over a
collection of representatives of isomorphism classes of irreducible representations of $K$.
This collection is countable, as $K$ is second countable.
For every such $\pi$ we set
\[ m_\pi=\max \{m\leq d_\pi~|~\exists \alpha<\kappa, ~\pi^m<H_\alpha\} \quad \mbox{ and }  \quad
\alpha_\pi=\min\{\alpha<\kappa~|~\pi^{m_\pi}<H_\alpha\}. \]
We let $\kappa'=\sup \alpha_\pi$. This is a countable ordinal.
If $\kappa'+1<\kappa$ the injection $H_{\kappa'}\to H_{\kappa'+1}$ must be an isomorphism.
We conclude that $\kappa\leq \kappa'+1$, hence countable.
\end{proof}

\section{Bi-proper representations} \label{proper reps}

Throughout this section we fix
\begin{itemize}
\item lcsc groups $S$ and $T$,
\item an $(S\times T)$-Lebegue space $Y$,
\item a Polish group $G$,
\item a measurable cocycle
$c:S \times Y\ec T \to G$. We denote by $\tilde{c}$ the pullback of $c$ to $Y$.
\end{itemize}

\begin{defn}
Given all the data above, a bi-proper representation of $Y$
consists of the following data
\begin{itemize}
\item a compact second countable group $L$,
\item a proper $G\times L$ action $V$,
\item a homomorphism $d:T\to L$ with a dense image,
\item a measurable map $\phi:Y \to V$ such that for every $s\in S$, $t\in T$, for almost every $y\in Y$,
\[ \phi(sty)=\tilde{c}(s,y)d(t)\phi(y). \]
\end{itemize}
We abbreviate the notation by saying that $V$ is a bi-proper representation of $Y$,
denoting the extra data by $L_V, d_V$ and $\phi_V$.
A morphism of bi-proper representations from the bi-proper representation $U$ to the bi-proper representation $V$ consists of
\begin{itemize}
\item
a continuous homomorphism $f:L_U\to L_V$ such that $d_V=f\circ d_U$,
\item a continuous map $\psi:U\to V$ which is $G$ and $f$ equivariant,
and such that $\phi_V=\psi\circ \phi_U$.
\end{itemize}
A bi-proper representation of $Y$ is said to be a coset bi-proper representation
if in addition
$V=G/K$ for some compact subgroup $K<G$
and $L<N_G(K)/K$ acts on the right.
\end{defn}

It is clear that the collection of bi-proper representations of $Y$ and their morphisms form a category,
possibly empty.

\begin{theorem} \label{initial proper}
Assume $(Y \ec S)^2$ is $T$-ergodic.
Then the category of bi-proper representations of $Y$, if non-empty, has an initial object.
Moreover, there exists an initial object which is a coset bi-proper representation.
\end{theorem}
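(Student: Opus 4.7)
The plan is to parallel Theorems~\ref{a-initial} and~\ref{initial}, replacing the Noetherian argument on algebraic subgroups by a transfinite descending-chain argument on compact subgroups of $G$ whose length is bounded using Lemma~\ref{kappa}.

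First, I would establish a proper analog of Lemma~\ref{mixing coset contraction}: every bi-proper representation $V$ of $Y$ admits a morphism from a coset bi-proper representation. Viewing $V$ as a proper $(G\times L_V)$-space and $Y$ as an ergodic $(S\times T)$-Lebesgue space with cocycle $(s,t,y)\mapsto(\tilde{c}(s,y),d_V(t))$, Proposition~\ref{coset proper} produces a compact subgroup $M<G\times L_V$, a coset space $(G\times L_V)/M$, and a factorization of $\phi_V$ through it. The composition $Y\to(G\times L_V)/M\to L_V/\pi_2(M)$ descends through $Y\ec S$ to a $d_V$-equivariant $T$-map; since $(Y\ec S)^2$ is $T$-ergodic and $d_V(T)$ is dense in $L_V$, Proposition~\ref{mixing proper} forces $\pi_2(M)=L_V$. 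A standard calculation then identifies $(G\times L_V)/M$ as a $G$-space with $G/K$ for the compact subgroup $K=\{g\in G:(g,e)\in M\}$, while the residual right action of $L_V$ factors through $N_G(K)/K$, yielding the desired coset bi-proper representation.

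Next I would extract a minimal coset bi-proper representation $G/K_0$ by applying Zorn's lemma to the poset of isomorphism classes of coset bi-proper representations, ordered by the existence of a morphism. The main obstacle, and the place where Lemma~\ref{kappa} is essential, is bounding descending well-ordered chains $K_\alpha\supsetneq K_\beta$ of compact subgroups carrying compatible coset bi-proper representations $\phi_\alpha:Y\to G/K_\alpha$. Fixing the top member $K$ of the chain, the closed subspaces $L^2(K/K_\alpha)\subset L^2(K)$ form a strictly increasing chain of $K$-subrepresentations under $K$-unitary injections that are not isomorphisms, so Lemma~\ref{kappa} bounds the cofinality of the chain to be countable. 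Along a countable cofinal descending sequence one then constructs, on a single conull subset of $Y$ (the countable intersection of the relevant conull sets), the pointwise limit in $\varprojlim G/K_n\simeq G/K_\infty$ where $K_\infty=\bigcap K_n$ is compact; compatibilities of the $L_n$ and $d_n$ (most cleanly checked after restricting everything to $N_G(K_\infty)/K_\infty$) produce a compact subgroup $L_\infty$ and a continuous dense-image homomorphism $d_\infty:T\to L_\infty$, giving the required lower bound.

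Finally I would check that a minimal $G/K_0$ obtained in this way is initial. Given any bi-proper representation $V$, form the product bi-proper representation $V\times G/K_0$ with $L=\overline{(d_V\times d_0)(T)}\subset L_V\times L_0$ acting diagonally, apply the first step to obtain a coset bi-proper representation $G/K$ mapping to $V\times G/K_0$, and invoke minimality of $K_0$ to conclude that the composition $G/K\to V\times G/K_0\to G/K_0$ is an isomorphism; inverting it and composing with the projection to $V$ produces the required morphism $G/K_0\to V$. Uniqueness of this morphism follows as in Theorem~\ref{initial} from the constraint $\phi_V=\psi\circ\phi_{G/K_0}$ together with the essential transitivity of $\phi_{G/K_0}$ on the single $G$-orbit $G/K_0$, which pins $\psi$ down on a dense set and hence everywhere by continuity.
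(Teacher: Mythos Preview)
Your outline matches the paper's proof in its three-part structure: the coset-contraction lemma (your first paragraph is exactly Lemma~\ref{proper mixing coset contraction}), the extraction of a minimal coset bi-representation via a descending-chain argument controlled by Lemma~\ref{kappa}, and the product trick to upgrade minimality to initiality. The one substantive difference is how the limit step of the chain argument is handled. You pass to a countable cofinal sequence, arrange the $K_n$ to be nested, and build the lower bound by hand as the inverse limit $G/\bigcap K_n$, together with $L_\infty$ and $d_\infty$. The paper instead runs a transfinite induction toward $\omega_1$ and, at each countable limit ordinal $\delta$, forms the Polish product $\prod_{\alpha<\delta}V_\alpha$ as a bi-proper representation and simply reapplies Lemma~\ref{proper mixing coset contraction} to it; this produces a coset $V_\delta$ mapping to every $V_\alpha$ automatically, without having to align the $K_\alpha$ into a nested tower, identify $\varprojlim G/K_n$ with a single coset space, or verify that the lifts of the $L_n$ to $N_G(K_{n+1})$ actually normalize the earlier $K_m$ so that $L_\infty$ sits inside $N_G(K_\infty)/K_\infty$. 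Your parenthetical ``most cleanly checked after restricting everything to $N_G(K_\infty)/K_\infty$'' is precisely the bookkeeping the product approach buys its way out of. Either route works; the paper's is shorter. The Zorn versus contradiction-at-$\omega_1$ packaging is cosmetic.
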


We will first prove the following lemma which is a strengthening of
Proposition~\ref{coset proper}.

\begin{lemma} \label{proper mixing coset contraction}
Assume $(Y \ec S)^2$ is $T$-ergodic.
Let $V$ be a bi-proper representation of $Y$.
Then there exists a coset bi-proper representation of $Y$ for some compact group $K<G$
and a morphism of bi-proper representations $i:G/K \to V$.
\end{lemma}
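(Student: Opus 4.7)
The plan is to mirror the proof of the algebraic Lemma~\ref{mixing coset contraction}, replacing each of its algebraic tools with the proper-action counterparts developed in \S\ref{proper prelim}. First I would observe that $(Y \ec S)^2$ being $T$-ergodic implies $Y \ec S$ is $T$-ergodic, which in turn forces $Y$ to be ergodic as an $(S \times T)$-Lebesgue space. Since $\tilde{c}$ is pulled back from $Y \ec T$, and so is $T$-invariant, while $d$ depends only on $T$, the formula $(s,t,y) \mapsto (\tilde{c}(s,y), d(t))$ defines an honest $(S\times T)$-cocycle on $Y$ with values in $G \times L$. Applying Proposition~\ref{coset proper} to this cocycle with the proper $(G \times L)$-target $V$, I may reduce to the case $V = (G \times L)/K_0$ for some compact subgroup $K_0 < G \times L$, with $\phi_V = i \circ \psi$ through a continuous $(G\times L)$-equivariant map $i$ and a cocycle-equivariant measurable map $\psi : Y \to (G \times L)/K_0$.

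Next I would project to the $L$-factor. Writing $\pi_1, \pi_2$ for the projections of $G \times L$ onto its factors, the continuous $G$-invariant map $(G \times L)/K_0 \to L/\pi_2(K_0)$ composed with $\psi$ kills the $\tilde{c}$-twisting, so it factors through $Y \ec S$ as a $d$-equivariant measurable map $\bar{\psi} : Y \ec S \to L/\pi_2(K_0)$. The space $L/\pi_2(K_0)$ is a proper $L$-space (since $L$ is compact), $d : T \to L$ has dense image, and $(Y \ec S)^2$ is $T$-ergodic by assumption; so Proposition~\ref{mixing proper} applies and forces $\bar{\psi}$ to be essentially constant with essential value an $L$-fixed point. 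Since $L$ acts transitively on $L/\pi_2(K_0)$, this is only possible if $\pi_2(K_0) = L$.

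Finally I would identify $(G \times L)/K_0$ as a coset bi-proper representation based on $G$. Set $K := \{g \in G : (g, e) \in K_0\}$, a compact subgroup of $G$. The $G$-equivariant map $G/K \to (G \times L)/K_0$, $gK \mapsto (g, e)K_0$, is a continuous bijection, whose surjectivity uses $\pi_2(K_0) = L$; Proposition~\ref{closed orbits} applied to the proper $G$-action then upgrades it to a homeomorphism. For each $l \in L$ I would choose $n(l) \in G$ with $(n(l), l) \in K_0$; routine checks (using that $K \times \{e\} \subset K_0$) show that $n(l)$ is well-defined modulo $K$, lies in $N_G(K)$, and that the assignment $l \mapsto n(l)^{-1}K$ is a continuous homomorphism $L \to N_G(K)/K$ transporting the $L$-action on $(G \times L)/K_0$ to the right action $l \cdot gK = g n(l)^{-1} K$ on $G/K$. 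This equips $G/K$ with the structure of a coset bi-proper representation, and the composite $G/K \to (G \times L)/K_0 \to V$ is the desired morphism of bi-proper representations.

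The main obstacle I anticipate is the bookkeeping for the $L$-action on $G/K$: verifying that the transported action genuinely assembles into a continuous homomorphism into $N_G(K)/K$, and matching it to the literal inclusion $L < N_G(K)/K$ demanded by the definition of a coset bi-proper representation. In general one may have to replace $L$ by its image in $N_G(K)/K$ (modding out by the kernel $L_0 = \{l : (e,l) \in K_0\}$), which remains compact and still receives a dense-image map from $T$. Once this identification is set up, the remaining compatibility verifications, and the passage from the data $(K_0, i, \psi)$ back to a coset bi-proper representation together with its morphism to $V$, are parallel to the algebraic proof.
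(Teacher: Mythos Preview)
Your argument is correct and follows essentially the same strategy as the paper's proof: reduce to a coset of $G\times L$ via Proposition~\ref{coset proper}, use Proposition~\ref{mixing proper} on the $L$-quotient to force $\pi_2(K_0)=L$, and then read off a $G$-coset structure. In fact you are more careful than the paper, which writes the resulting $G$-space as $G/\pi_1(M)$ where it should be $G/K$ with $K=M\cap(G\times\{e\})$; your handling of the transported $L$-action and the possible need to pass to the image of $L$ in $N_G(K)/K$ fills in precisely what the paper's ``the lemma follows easily'' leaves implicit.
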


\begin{proof}
By Proposition~\ref{coset proper} we may reduce to the case
$V=(G\times L)/M$ for some compact subgroup $M<G\times L$.
Denote the obvious projection from $G \times L$
to $G$ and $L$ correspondingly by $\pi_1$ and $\pi_2$.
Applying Proposition~\ref{mixing proper}
to the compact group $L$, the proper $L$ space $L/\pi_2(M)$ and the $T$-Lebesgue space
$X=Y\ec S$, we conclude that $\pi_2(M)=L$.
It follows that as $G$-spaces, $ (G\times L)/M \simeq G/\pi_1(M)$.
The lemma follows easily.
\end{proof}

\begin{proof}[Proof of Theorem~\ref{initial proper}]
We will assume throughout that the category of bi-proper representations of $Y$ is non-empty.
By Lemma~\ref{proper mixing coset contraction} we also get that the subcategory of all coset bi-proper representations is non-empty.
We fix a coset bi-representation $\phi_1:Y\to G/K_1$
(we omit here and in what follows $d$ and $L$ from our notation, for simplicity).
We first claim that the subcategory of coset bi-representations has a minimal object:
a coset bi-representation $\phi_0:Y\to V_0$ such that for every coset bi-representation $\phi:Y \to V$, every morphism
of bi-representations, $\psi:V\to V_0$, is an isomorphism.

Assuming not, we will derive a contradiction by transfinite induction.
We will construct inductively
for each countable ordinal $\alpha$ a coset bi-representation $\phi_\alpha:Y\to V_\alpha$
and for each countable $\beta>\alpha$ a morphism of bi-representations which is not an isomorphism
$\psi_{\beta,\alpha}:V_\beta\to V_\alpha$
such that
for any $\gamma>\beta>\alpha$, $\psi_{\gamma,\alpha}=\psi_{\beta,\alpha}\circ\psi_{\gamma,\beta}$.
We start by setting $V_1=G/K_1$.

Assume now
$\delta$ is a countable ordinal such that
for each ordinal $\alpha <\delta$
a coset bi-representation $\phi_\alpha:Y\to V_\alpha$ was already chosen
and also for each $\alpha < \beta < \delta$ a morphism of bi-representations which is not an isomorphism
$\psi_{\beta,\alpha}:V_\beta\to V_\alpha$ was already chosen
such that
for every $\alpha <\beta < \gamma < \delta$, $\psi_{\gamma,\alpha}=\psi_{\beta,\alpha}\circ\psi_{\gamma,\beta}$.

Assume $\delta$ is a successor ordinal, $\delta=\gamma+1$ for some $\gamma$.
By our contradiction assumption, $V_\gamma$
is not a minimal coset bi-representation of $Y$.
We choose
a coset bi-representation $\phi_\delta:Y\to V_\delta$ and a morphism $\psi_{\delta,\gamma}:V_\delta\to V_\gamma$ which is not an isomorphism.
We set for $\alpha < \gamma$, $\psi_{\delta,\alpha}=\psi_{\gamma,\alpha}\circ\psi_{\delta,\gamma}$.
Clearly, $\psi_{\delta,\alpha}$ is not an isomorphism.

Assume $\delta$ is a limit ordinal.
By the countability of $\delta$, the product space $V=\prod_{\alpha<\delta} V_\alpha$ is Polish
and thus gives a proper bi-representation of $Y$.
Applying Lemma
\ref{proper mixing coset contraction} to $V$ we obtain a coset bi-representation $\phi_\delta:Y\to V_\delta$ and a map
$i:V_\delta\to V$. For each $\alpha < \delta$ we set $\psi_{\delta,\alpha}=p_{\alpha}\circ i$.
We clearly have
for every $\alpha < \beta < \delta$, $\psi_{\delta,\alpha}=\psi_{\beta,\alpha}\circ\psi_{\delta,\beta}$.
In particular, for every $\alpha < \delta$, $\psi_{\delta,\alpha}=\psi_{\alpha+1,\alpha}\circ\psi_{\delta,\alpha+1}$
is not an isomorphism, as $\psi_{\alpha+1,\alpha}$ is not an isomorphism.

We set $\kappa=\omega_1$, the first uncountable ordinal.
We had constructed a coset bi-representation for every $\alpha<\kappa$.
We now fix for each such an $\alpha$ a point
$v_\alpha\in \psi_{\alpha,1}^{-1}(eK_1)$ and we set $K_\alpha=\Stab_G(v)$
and $H_\alpha=L^2(K_1/K_\alpha)$.
Clearly we get a $K_1$-unitary injection $H_\alpha\to L^2(K_1)$.
Note that the isomorphism type of $H_\alpha$ does not depend on the choice of $v_\alpha$.
In particular, choosing for $\beta > \alpha$, $v_\alpha=\psi_{\beta,\alpha}(v_\beta)$
we obtain a $K_1$-unitary injection which is not an isomorphism $H_\alpha\to H_\beta$.
By Lemma~\ref{kappa} we obtain that $\kappa$ is countable, which is an absurd.

By this we have proven the existence of a minimal coset bi-representation.
We fix such a minimal coset bi-representation $\phi_0:Y\to G/K_0$.
We now argue to show that this coset bi-representation is an initial object in the category of all bi-proper representations of $Y$.
The argument is similar to the one given in the proof of Theorem~\ref{initial}.

Fix any bi-proper representation of $Y$, $V$.
It is clear that, if exists, a morphism of bi-proper representations from $G/K_0$ to $V$ is unique.
We are left to show existence.
To this end we consider
the product bi-representation $V\times G/K_0$.
Applying Lemma~\ref{proper mixing coset contraction} to this product bi-representation we obtain the commutative diagram


\begin{equation} \label{diag-AG}
\xymatrix{ Y \ar@{.>}[r] \ar[d]^{\phi_V} \ar[rd]^{\phi} \ar@/^3pc/[rrd]^{\phi_0} & G/K \ar@{.>}[d]_{i} &  \\
		   V & V\times G/K_0 \ar[r]^{~~p_2} \ar[l]_{p_1~~~~} & G/K_0  }
\end{equation}

By the fact that $G/K_0$ is a minimal object in the category of coset bi-representations,
the morphism $p_2\circ i$ must be an isomorphism.
We thus obtain the morphism
\[ p_1\circ i \circ (p_2\circ i)^{-1}:G/K_0 \to V. \]
It is easy to check that this is a morphism of bi-proper representations.
\end{proof}

The following proposition is an analog of Proposition~\ref{T'}.
Its proof is essentially the same and we will not repeat it.

\begin{prop} \label{T' proper}
Let $T'$ be a group acting on $Y$ commuting with $S$ and $T$.
Assume the cocycle $\tilde{c}$ is $T'$-invariant in the sense that
for all $s\in S$, $t'\in T'$ and a.e.\ $y\in Y$, $\tilde{c}(s,ty)=\tilde{c}(s,y)$.
Assume $(Y \ec S)^2$ is both $T$ and $T'$ ergodic.
Let $G/K$, $L<N_G(K)/K$,
$d:T \to L$ and $\phi:Y \to G/K$ be an initial object
in the category of bi-proper representations of the $(S,T)$ bu-space $Y$, as guaranteed by Theorem~\ref{initial proper}.
Then there exists a compact subgroup
$L'<N_G(K)/K$
which commutes with $L$
and a homomorphism $d':T'\to L'$
such that $G/K$, $L'$, $d'$ and $\phi$ form an initial object in the category of bi-proper representations of the $(S,T')$ bi-space $Y$.
\end{prop}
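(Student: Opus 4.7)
My plan is to emulate the proof of Proposition~\ref{T'}, substituting Theorem~\ref{initial proper} for Theorem~\ref{initial} throughout, and taking topological closures inside $N_G(K)/K$ rather than Zariski closures.

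First, I would verify that for each $t'\in T'$, the quadruple $(G/K,L,d,\phi\circ t')$ is once again a bi-proper representation of the $(S,T)$ bi-space $Y$. Since $T'$ commutes with $S$ and $T$ and $\tilde{c}$ is $T'$-invariant, for $s\in S$, $t\in T$ and almost every $y\in Y$ one has
\[
(\phi\circ t')(sty)=\phi(stt'y)=\tilde{c}(s,t'y)\,d(t)\,\phi(t'y)=\tilde{c}(s,y)\,d(t)\,(\phi\circ t')(y),
\]
exactly as in Proposition~\ref{T'}. Applying the initial property of $(G/K,L,d,\phi)$ yields a unique morphism of bi-proper representations to $(G/K,L,d,\phi\circ t')$, whose underlying $G$-equivariant automorphism I identify with $d'(t')\in\Aut_G(G/K)=N_G(K)/K$ via the proper analogue of Proposition~\ref{aut-identification}. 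Uniqueness of the dashed arrow then forces $t'\mapsto d'(t')$ to be a group homomorphism, and commutativity of $d'(T')$ with $d(T)$ inside $N_G(K)/K$ follows by combining the relation $d'(t')\circ\phi=\phi\circ t'$ with the analogous relation for $d(t)$ and the commutativity $tt'=t't$ in $T\times T'$ (using that the $G$-saturation of the essential image of $\phi$ is essentially all of $G/K$, by initiality and Proposition~\ref{coset proper}).

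The central obstacle is to exhibit a compact subgroup $L'<N_G(K)/K$ in which $d'(T')$ sits densely, so that $(G/K,L',d',\phi)$ qualifies as a bi-proper representation of the $(S,T')$ bi-space. In the algebraic setting this was immediate, as the Zariski closure $\overline{d'(T')}^Z$ is automatically an algebraic subgroup of $N_{\bf G}({\bf H})/{\bf H}$. In the proper setting I must instead show that the topological closure of $d'(T')$ in the Polish group $N_G(K)/K$ is compact, equivalently that $d'(T')$ is precompact. I expect this to follow from the observation that each $d'(t')$ commutes with left $G$-translation and preserves the essential image of $\phi$ in $G/K$, coupled with properness of the $G$-action on $G/K$ (immediate from compactness of $K$); one can then bound $d'(T')$ inside a compact set by a translation argument. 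Failing a direct attack, one can avoid constructing $L'$ by hand: restricting the target of $\phi$ to a suitable $G$-invariant Polish subspace and applying Lemma~\ref{proper mixing coset contraction} together with Theorem~\ref{initial proper} directly produces an initial bi-proper representation of the $(S,T')$ bi-space.

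With an initial object for the $(S,T')$ bi-space in hand, the argument concludes as in Proposition~\ref{T'}: the hypotheses on $T$ and $T'$ are symmetric, so interchanging their roles yields a $G$-morphism in the opposite direction between the two initial objects, and by uniqueness the two morphisms compose to identities on both sides. Thus the $(S,T)$ initial object $(G/K,L,d,\phi)$, equipped with the compact group $L'$ and the homomorphism $d'$, is itself an initial object for the $(S,T')$ bi-space, and $L'$ commutes with $L$ by the commutativity of $d'(T')$ with $d(T)$ established in the first step.
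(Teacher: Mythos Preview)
Your proposal is correct and follows precisely the approach the paper indicates: the paper simply declares that the proof ``is essentially the same'' as that of Proposition~\ref{T'} and does not repeat it, and your outline reproduces that argument with the obvious substitutions (Theorem~\ref{initial proper} for Theorem~\ref{initial}, topological closure in $N_G(K)/K$ for Zariski closure). You are right to flag the compactness of $L'=\overline{d'(T')}$ as the one genuinely new point in the proper setting---a detail the paper does not make explicit; note that in the only place the proposition is invoked (Theorem~\ref{main}) one has $N_G(K)$ compact by Corollary~\ref{NL}, so the issue does not arise there.
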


\section{Super-Rigidity over local fields} \label{local field}

\begin{prop}
Let $k$ be a local filed.
Let ${\bf G}$ be a $k$-algebraic group and ${\bf V}$ a $k$-${\bf G}$-affine variety.
Denote $G={\bf G}(k)$ and $V={\bf V}(k)$.
Let $C\subset V$ be a compact subset and denote by $\bar{B}$ its Zariski closure.
Then the group $\Stab_G(B)/\Fix_G(\bar{B})$ is compact.
\end{prop}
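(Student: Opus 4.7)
I read ``$\Stab_G(B)$'' as the setwise stabilizer $\Stab_G(C)$, so that $\Fix_G(\bar B)\subset\Stab_G(C)\subset\Stab_{\bf G}(\bar B)(k)$ (using that setwise stabilizers commute with Zariski closure). The plan is to represent $Q\defq\Stab_G(C)/\Fix_G(\bar B)$ as a closed subgroup of the compact isometry group of a suitable finite-dimensional normed $k$-vector space.

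First I would find a finite-dimensional $\Stab_{\bf G}(\bar B)$-invariant subspace $\bar W\subset k[\bar B]$ generating $k[\bar B]$ as a $k$-algebra. Since ${\bf V}$ is affine, $k[{\bf V}]$ is finitely generated, and by \cite[Corollary 1.9]{borel} the ${\bf G}$-action on $k[{\bf V}]$ is locally finite; pick a finite-dimensional ${\bf G}$-invariant $W\subset k[{\bf V}]$ containing algebra generators and take $\bar W$ to be its image modulo $I(\bar B)$. The resulting homomorphism $\rho:\Stab_G(C)\to\GL(\bar W)(k)$ has kernel exactly $\Fix_G(\bar B)$, since an element acting trivially on $\bar W$ acts trivially on the algebra it generates, hence as the identity on $\bar B$.

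The key step is to equip $\bar W$ with the norm $\|w\|_C\defq\sup_{c\in C}|w(c)|$. This supremum is finite because polynomials are continuous on the compact set $C$, and non-degenerate by Zariski density: a polynomial vanishing on $C$ has Zariski-closed zero locus containing a Zariski-dense subset of $\bar B$, so it vanishes on $\bar B$. Elements $g\in\Stab_G(C)$ preserve this norm, since $\|g\cdot w\|_C=\sup_{c\in g^{-1}C}|w(c)|=\|w\|_C$. Because $k$ is a local field, $\Iso(\bar W,\|\cdot\|_C)$ is compact in $\GL(\bar W)(k)$: a linear isometry is determined by the images of a basis, all of which must lie in a compact set of elements of norm one.

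To upgrade this to compactness of $Q$ itself, let ${\bf N}=\Stab_{\bf G}(\bar B)$ and ${\bf Z}=\Fix_{\bf G}(\bar B)$; then ${\bf N}/{\bf Z}$ acts faithfully on $\bar W$, embedding as a $k$-algebraic subgroup of $\GL(\bar W)$, so $({\bf N}/{\bf Z})(k)$ is closed in $\GL(\bar W)(k)$. Over the local field $k$, the natural continuous injection ${\bf N}(k)/{\bf Z}(k)\hookrightarrow({\bf N}/{\bf Z})(k)$ is a topological embedding with closed image (open mapping theorem for locally compact Polish groups together with finiteness of $H^1(k,{\bf Z})$). Since $\Stab_G(C)$ is closed in ${\bf N}(k)$, $Q$ sits as a closed subgroup of $\GL(\bar W)(k)$; being additionally contained in the compact group $\Iso(\bar W,\|\cdot\|_C)$, $Q$ is compact. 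The main obstacle is exactly this last topological identification: the map ${\bf N}(k)/{\bf Z}(k)\to({\bf N}/{\bf Z})(k)$ need not be surjective, so one must work over its image rather than the whole algebraic quotient, and this step is the one that really uses the local-field hypothesis.
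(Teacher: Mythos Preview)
Your proof is correct and follows the same strategy as the paper: exhibit the quotient as a group of linear isometries of a norm on a finite-dimensional $k$-vector space, hence as a closed subgroup of a compact group. The only difference is a duality in implementation---the paper embeds $\bar B$ linearly into $k^n$ via \cite[Proposition~1.12]{borel} and uses the Minkowski gauge of the balanced convex hull of the image of $C$, whereas you pass to polynomial functions on $\bar B$ with the sup norm over $C$; your explicit treatment of the embedding ${\bf N}(k)/{\bf Z}(k)\hookrightarrow({\bf N}/{\bf Z})(k)$ is more careful than the paper's ``without loss of generality'' reduction, which tacitly uses the same fact.
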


\begin{proof}
Without loss of generality we may replace ${\bf G}$ by $\Stab_{\bf G}(\bar{B})$ and then assume ${\bf V}=\bar{B}$.
We then may further assume ${\bf G}=\Stab_{\bf G}(\bar{B})/\Fix_{\bf G}(\bar{B})$. We do so.
By \cite[Proposition~1.12]{borel} there exists an embedding $V\to k^n$, which we may assume spanning,
equivariant with respect to some representation $G\to\GL_n(k)$, which we thus may assume injective.
Denote by $D$ the image of $C$ in $V$ and by $K$ the image of $\Stab_G(B)$ in $\GL_n(k)$.
Then $K$ preserves the balanced convex hull of $D$ given by
\[ E=\left\{\sum_{i=1}^n \alpha_iv_i~|~v_i\in D,~\alpha_i\in k,~\sum_{i=1}^n |\alpha_i|\leq 1\right\}, \]
and the associated Minkowski norm on $k^n$ defined by
\[ \|x\|=\sup\{r>0~|~\forall \alpha\in k,~|\alpha|<r~\Rightarrow~\alpha x\in E\}. \]
Thus $K$ is compact.
\end{proof}

By applying the proposition to the conjugation action of ${\bf G}$ on itself, we obtain the following.

\begin{cor} \label{NL}
Let $k$ be a local filed.
Let ${\bf G}$ be a $k$-algebraic group and denote $G={\bf G}(k)$.
Let $K<G$ be a compact subgroup.
Then $N_G(K)/Z_G(K)$ is compact.
In particular, if ${\bf G}$ is an adjoint form semisimple group and $K$ is Zariski dense then $N_G(K)$ is compact.
\end{cor}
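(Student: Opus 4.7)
The plan is to apply the preceding proposition to ${\bf V} = {\bf G}$ viewed as a $k$-${\bf G}$-affine variety under the conjugation action, with $C = K$. Write $\bar K^Z$ for the Zariski closure of $K$ in ${\bf G}$. Under the conjugation action, the stabilizer of $\bar K^Z$ is exactly the normalizer $N_G(\bar K^Z)$, and the pointwise fixator of $\bar K^Z$ is exactly the centralizer $Z_G(\bar K^Z)$. So the proposition gives us directly that $N_G(\bar K^Z)/Z_G(\bar K^Z)$ is compact.

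Next I would relate this to $N_G(K)/Z_G(K)$. Since centralizing a given element is a Zariski-closed condition, an element that centralizes all of $K$ automatically centralizes $\bar K^Z$, so $Z_G(K) = Z_G(\bar K^Z)$. On the other hand, any $g \in N_G(K)$ satisfies $g\bar K^Z g^{-1} = \overline{gKg^{-1}}^Z = \bar K^Z$, so $N_G(K) \subseteq N_G(\bar K^Z)$ (the reverse inclusion need not hold in general). Combined with $Z_G(K) = Z_G(\bar K^Z)$, this yields a natural injective homomorphism
\[ N_G(K)/Z_G(K) \hookrightarrow N_G(\bar K^Z)/Z_G(\bar K^Z). \]
To upgrade compactness of the target to compactness of the source, I would check that this embedding has closed image: the preimage of the image under the quotient map $N_G(\bar K^Z) \to N_G(\bar K^Z)/Z_G(\bar K^Z)$ is $N_G(K) \cdot Z_G(\bar K^Z) = N_G(K)$ (using $Z_G(\bar K^Z) \subseteq N_G(K)$), which is closed in $G$, hence in $N_G(\bar K^Z)$, because $N_G(K)$ is defined by the continuous condition $gKg^{-1} = K$ with $K$ compact. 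A closed subgroup of a compact group is compact, so $N_G(K)/Z_G(K)$ is compact.

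For the ``in particular'' statement, assume moreover that ${\bf G}$ is adjoint form semisimple and that $K$ is Zariski dense in ${\bf G}$. Then $\bar K^Z = {\bf G}$, so $Z_G(K) = Z_G({\bf G}) = Z({\bf G})(k)$. The adjoint assumption forces $Z({\bf G})$ to be trivial as an algebraic group, whence $Z_G(K)$ is trivial and $N_G(K) = N_G(K)/Z_G(K)$ is compact.

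There is no real obstacle here beyond careful bookkeeping: the analytic content is already packaged in the preceding proposition (bounding the compact-open stabilizer via a Minkowski norm on the spanning image), and the present corollary is a purely formal passage from the stabilizer/fixator of $\bar K^Z$ to the normalizer/centralizer of $K$ itself. The one small subtlety worth writing out carefully, as above, is that the inclusion $N_G(K) \subseteq N_G(\bar K^Z)$ is strict in general, so one must verify that the induced map of quotients is both well-defined and a closed embedding before invoking compactness of the ambient quotient.
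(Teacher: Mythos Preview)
Your proof is correct and follows exactly the approach the paper intends: apply the preceding proposition to the conjugation action of ${\bf G}$ on itself with $C=K$. The paper states this as a one-liner (``By applying the proposition to the conjugation action of ${\bf G}$ on itself''), whereas you have carefully supplied the identification of stabilizer and fixator with normalizer and centralizer, the equality $Z_G(K)=Z_G(\bar K^Z)$, the inclusion $N_G(K)\subset N_G(\bar K^Z)$, and the closedness of the image in the quotient---all of which are the right details to check.
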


\begin{theorem} \label{main}
Let $S$ and $T$ be locally compact second countable groups.
Assume $T$ is generated as a topological group by the closed, non-compact subgroups $T_0,T_1,T_2,\ldots$
(for finitely or countably many $T_i$'s)
such that for each $i=1,2,\ldots$ the groups $T_{i-1}$ and $T_i$ commute.
Let $Y$ be an $S\times T$ Lebesgue space.
Denote by $Y\ec S$ the space of ergodic components of $Y$ with respect to the $S$-action.
This is a $T$-Lebesgue space.
Assume $Y$ is $T_0\times S$ amenable
and $(Y \ec S)^j$ is $T_i$ ergodic for every $i=0,1,2,\ldots$ and every $j\in \bbN$.

Let $k$ be a local field.
Let $G$ be the $k$-points of an adjoint form simple algebraic group defined over $k$.
Let $c:S\times Y \ec T \to G$ be a measurable cocycle.
Assume that $c$ is not cohomologous to a cocycle taking values in a proper algebraic subgroup or a
bounded subgroup of $G$.
Then there exists a continuous homomorphism $d:T\to G$ and a measurable map
$\phi:Y\to G$ with the following property:
for every $s\in S$, $t\in T$, for almost every $y\in Y$,
\[ \phi(sty)=\tilde{c}(s,y)\phi(y) d(t)^{-1},\]
where $\tilde{c}:S\times Y\to G$ is the pullback of $c$.
\end{theorem}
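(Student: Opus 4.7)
The plan is to reduce Theorem~\ref{main} to Theorem~\ref{main alg} by exploiting the $(T_0\times S)$-amenability of $Y$ through Theorem~\ref{BDL} in order to produce a non-trivial bi-representation of the $(S,T_0)$-bi-action on $Y$. The algebraic alternative of Theorem~\ref{BDL} will furnish a bi-algebraic representation and let Theorem~\ref{main alg} apply directly; the proper alternative will be further reduced to the bi-algebraic case in one subcase and ruled out in the other by the hypothesis that $c$ is not cohomologous to a cocycle in a bounded subgroup.

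First, set $R=T_0\times S$ and define $f\colon R\times Y\to G$ by $f((t_0,s),y)=\tilde c(s,y)$. This is a cocycle because $\tilde c$, being pulled back from $Y\ec T$, is $T$-invariant (in particular $T_0$-invariant) in its $Y$-variable. The $R$-action on $Y$ is amenable by hypothesis and ergodic because $Y\ec S$ is $T_0$-ergodic, so Theorem~\ref{BDL} applies (and since $k$ is local the proper alternative genuinely yields a proper $G$-space). In the algebraic alternative one obtains an $f$-equivariant $\phi\colon Y\to \mathbf{G}/\mathbf{H}(k)$ for some $\mathbf{H}\lneq\mathbf{G}$; since $f((t_0,1),y)=e$, the map $\phi$ is $T_0$-invariant, and together with $S$-equivariance through $\tilde c$ this exhibits $\phi$ as a non-trivial bi-algebraic representation of the $(S,T_0)$-bi-action on $Y$ with trivial $\mathbf{L}$ and trivial $d_0$. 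The $c$-ergodicity of the $S$-action on $Y\ec T$ follows from the hypothesis via Proposition~\ref{c-ergodic-hull}, so all hypotheses of Theorem~\ref{main alg} hold and its conclusion is exactly what is sought.

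In the proper alternative, one has an $f$-equivariant $\phi\colon Y\to V$ with $V$ a proper $G$-space, and Proposition~\ref{coset proper} factors $\phi$ through a $T_0$-invariant, $S$-equivariant $\psi\colon Y\to G/K$ with $K$ compact. This $\psi$ is a non-trivial bi-proper representation of the $(S,T_0)$-bi-action on $Y$, so Theorem~\ref{initial proper} yields an initial coset object $\phi_0\colon Y\to G/K_0$ with $L_0<N_G(K_0)/K_0$ and $d_0\colon T_0\to L_0$. Iterating Proposition~\ref{T' proper} along the commuting chain $T_0,T_1,T_2,\ldots$ produces compact subgroups $L_i<N_G(K_0)/K_0$ and continuous $d_i\colon T_i\to L_i$ for which the same $\phi_0$ is initial for each $(S,T_i)$-bi-action. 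Now split on $\mathbf{H}_0:=\overline{K_0}^Z$. If $\mathbf{H}_0\lneq\mathbf{G}$, then, since conjugation preserves Zariski closures, $N_G(K_0)\subset N_\mathbf{G}(\mathbf{H}_0)(k)$; hence the natural $G$-map $G/K_0\to\mathbf{G}/\mathbf{H}_0(k)$ pushes forward the right $L_0$-action to a right action of the algebraic group $\mathbf{L}:=\overline{\tilde d_0(T_0)}^Z<N_\mathbf{G}(\mathbf{H}_0)/\mathbf{H}_0$ (where $\tilde d_0$ is the composition of $d_0$ with the natural homomorphism $N_G(K_0)/K_0\to N_\mathbf{G}(\mathbf{H}_0)/\mathbf{H}_0(k)$), and the composition $Y\to G/K_0\to\mathbf{G}/\mathbf{H}_0(k)$ becomes a non-trivial bi-algebraic representation of the $(S,T_0)$-bi-action, so Theorem~\ref{main alg} applies as before. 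If instead $\mathbf{H}_0=\mathbf{G}$, then $K_0$ is Zariski-dense and Corollary~\ref{NL} forces $N_G(K_0)$ to be compact; hence the closure $L$ in $N_G(K_0)/K_0$ of the subgroup generated by the $L_i$, and its preimage $L'<N_G(K_0)$, are both compact. The composition $Y\to G/K_0\to G/L'$ is $T_i$-invariant for each $i$ (since $L_i\subset L$ acts trivially on $G/L'$) and hence $T$-invariant as $T$ is topologically generated by the $T_i$'s, so it descends to a $c$-equivariant $Y\ec T\to G/L'$; Proposition~\ref{proper-cohom} then forces $c$ to be cohomologous to a cocycle valued in the bounded group $L'$, contradicting the hypothesis. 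Thus this subcase is vacuous.

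The main obstacle is the bookkeeping in the proper alternative. In the Zariski-dense subcase one must check that the iterated application of Proposition~\ref{T' proper}, which only guarantees commutation between adjacent pairs $L_{i-1},L_i$, nevertheless produces $L_i$'s sitting inside the one fixed compact group $N_G(K_0)/K_0$, so that the generated closed subgroup is compact and its preimage sustains the descent to $Y\ec T$. In the non-Zariski-dense subcase one must verify that the right $L_0$-action, which is only a priori continuous and not a priori algebraic, does descend through the natural quotient $G/K_0\to\mathbf{G}/\mathbf{H}_0(k)$ to a bona fide right action by a $k$-algebraic subgroup of $N_\mathbf{G}(\mathbf{H}_0)/\mathbf{H}_0$ of the sort required by the bi-algebraic formalism.
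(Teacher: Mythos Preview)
Your proof is correct and follows essentially the same strategy as the paper's, with one organizational difference worth noting. The paper argues by contradiction: it assumes the $(S,T_0)$ bi-action on $Y$ \emph{is} $c$-ergodic and derives a contradiction to the unboundedness hypothesis on $c$. Under that standing assumption, both your ``algebraic alternative'' and your ``$\mathbf{H}_0\lneq\mathbf{G}$'' subcase are ruled out in one stroke (each would produce a non-trivial bi-algebraic representation), so the paper proceeds directly to the Zariski-dense case. There it passes to $G/N_G(K)$ rather than your $G/L'$, which is slightly cleaner since all $L_i$ automatically act trivially on $G/N_G(K)$; but your $L'\subset N_G(K)$ works just as well.

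Your explicit case analysis thus does a bit of redundant work (iterating Proposition~\ref{T' proper} before splitting on $\overline{K_0}^Z$, when only $d_0$ is needed in the non-dense subcase), but it has the virtue of spelling out exactly why the paper's terse assertion ``by our standing assumption $K$ is Zariski dense'' is justified. The two bookkeeping worries you flag at the end are not genuine obstacles: the $L_i$ all land in the single compact group $N_G(K_0)/K_0$ by construction, and the descent of the right $L_0$-action to $\mathbf{G}/\mathbf{H}_0$ is immediate from $N_G(K_0)\subset N_{\mathbf G}(\mathbf{H}_0)(k)$ together with $K_0\subset\mathbf{H}_0(k)$, which gives a well-defined homomorphism $N_G(K_0)/K_0\to (N_{\mathbf G}(\mathbf{H}_0)/\mathbf{H}_0)(k)$ intertwining the two right actions via Proposition~\ref{aut-identification}.
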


\begin{proof} 
The theorem will follow from
Theorem~\ref{main alg}
once we show that
the $(S,T_0)$ bi-action on $Y$ is not $c$-ergodic.
Our standing assumption for the rest of the proof is that the $(S,T_0)$ bi-action on $Y$ is $c$-ergodic.
We will derive a contradiction by showing that the cocycle $c$
is cohomologous to a cocycle taking values in a bounded subgroup of $G$,
which is,
by Proposition~\ref{proper-cohom},
equivalent to the existence of
a $c$-equivariant map $\psi:Y \ec T \to G/N$ where $N<G$ is bounded.

By Theorem~\ref{BDL} for $R=S\times T_0$ and $f=\tilde{c}$
(that is for $s\in S, t\in T_0$ and $y\in Y$, $f(s,t,y)=\tilde{c}(s,y)$)
we obtain that the category of bi-proper representations of the $(S,T_0)$ bi-action on $Y$ is not empty.
By Theorem~\ref{initial proper} this category has an initial object which is a coset bi-representation.
Let $G/K$, $L_0<\Aut_G(G/K)$,
$d_0:T_0 \to L_0$ and $\phi:Y \to G/K$ form such an initial object.
By our standing assumption
$K$ is Zariski dense in ${\bf G}$,
thus, by Corollary~\ref{NL}, $N=N_G(K)$ is compact.
By Proposition~\ref{T' proper}
there also exist
a compact subgroup
$L_1<N/K$
which commutes with $L_0$
and a homomorphism $d_1:T_1\to L_1$
such that $G/K$, $L_1$, $d_1$ and $\phi$ form an initial object in the category of bi-proper representations of the $(S,T_1)$ bi-action on $Y$.

Repeating this argument for each pair of groups $T_{i-1},T_i$ we get
compact subgroups $L_i<N/K$
and homomorphisms
$d_i:T_i\to L_i$
satisfying for every $t\in T_i$ and almost every $y\in Y$,
$\phi(ty)=\phi(y)d_i(t)^{-1}$.

We conclude that the $G$-morphism $\pi:G/K \to G/N$
is $L_i$ invariant for every $i$.
Since $T$ is topologically generated by the groups $T_i$ and $L_i=\overline{d_i(T_i)}$,
it follows that $\pi\circ \phi:Y \to G/N$ factors through
$Y \ec T$ and we get indeed a $c$-equivariant map $\psi:Y \ec T \to G/N$.
\end{proof}


\begin{theorem} \label{firstcor}
Let $S$ and $T$ be locally compact second countable groups.
Assume $T$ is generated as a topological group by the closed, non-compact subgroups $T_0,T_1,T_2,\ldots$
(for any finite number or countably many $T_i$'s).
Assume $T_0$ is amenable and for each $i=1,2,\ldots$ the groups $T_{i-1}$ and $T_i$ commute.
Let $Y$ be a measured coupling of $S$ and $T$ in the following sense:
$S\times T$ acts on $Y$, as a $T$-space $Y \simeq X\times T$ for some space $X$ where the $T$-action is on the
second coordinate and
as an $S$-space $Y \simeq X'\times S$ for some space $X'$ where the $S$-action is on the
second coordinate.
Consider $X$ as an $S$-space and
let $e:S\times X\to T$ be the associated cocycle.
Assume
that as a $T$-space $X'$ has a finite invariant mixing measure.

Let $k$ be a local field.
Let $G$ be the $k$-points of an adjoint form simple algebraic group defined over $k$.
Let $c:S\times X \to G$ be a measurable cocycle.
Assume that $c$ is not cohomologous to a cocycle taking values in a proper algebraic subgroup or a
bounded subgroup of $G$.
Then there exists a continuous homomorphism $d:T\to G$
such that $c$ is cohomologous to $d\circ e$.
\end{theorem}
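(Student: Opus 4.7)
The plan is to apply Theorem~\ref{main} to $Y$ and the cocycle $c$, then translate the resulting formula back to a cohomology relation on $X$.

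First I verify the three hypotheses of Theorem~\ref{main}. For the ergodicity condition, $Y\ec S\simeq X'$ as a $T$-space carries a finite invariant mixing measure, so the diagonal $T$-action on $(X')^j$ is mixing for every $j\in\bbN$. Each $T_i$ is closed and non-compact in $T$, so sequences escaping compacta in $T_i$ also escape in $T$; hence the restricted $T_i$-action on $(X')^j$ remains mixing, in particular ergodic. The non-triviality hypothesis on $c$ is given directly.

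For the $(T_0\times S)$-amenability of $Y$, I exploit that $Y\simeq X'\times S$ as an $S$-space, with $S$ acting freely on the second coordinate and commuting with $T_0\subset T$. Given any $(T_0\times S)$-bundle of convex compact sets $V\to Y$, the $S$-action on $V$ is free (it projects to the free $S$-action on $Y$), so $V/S$ is a standard Borel space and becomes a $T_0$-bundle of convex compacts over $Y/S=X'$. Sections of $V/S\to X'$ lift uniquely to $S$-invariant sections of $V\to Y$ via the principal $S$-bundle structure, and under this correspondence $(T_0\times S)$-invariant sections of $V$ match $T_0$-invariant sections of $V/S$. Amenability of the group $T_0$ ensures that every $T_0$-action, in particular the one on $X'$, is amenable, so the required $T_0$-invariant section exists.

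Applying Theorem~\ref{main} yields a continuous homomorphism $d:T\to G$ and a measurable map $\phi:Y\to G$ with $\phi(sty)=\tilde{c}(s,y)\,\phi(y)\,d(t)^{-1}$, where $\tilde{c}$ is the pullback of $c$ along $Y\to X=Y\ec T$. Using $Y\simeq X\times T$ as a $T$-space, evaluating the formula at the identity of $S$ gives $\phi(x,t)=\phi_0(x)\,d(t)^{-1}$ with $\phi_0(x):=\phi(x,1_T)$. Evaluating at the identity of $T$ and substituting the $S$-action on $X\times T$ expressed through the cocycle $e$ then leads, after a direct calculation, to the identity $c(s,x)\,\phi_0(x)=\phi_0(sx)\,d(e(s,x))$, which witnesses that $c$ is cohomologous to $d\circ e$ via $\phi_0$.

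The main obstacle is the amenability verification: it relies on both the freeness of the $S$-action on $Y$ (from the coupling structure) and the measurable quotient-of-bundle construction, and one must confirm that $V/S$ is indeed a standard Borel $T_0$-bundle of convex compacts over $X'$ to which the amenability of $T_0$ can be applied. The descent step from $\phi$ on $Y$ to $\phi_0$ on $X$ is then conceptually immediate but somewhat convention-sensitive in fixing the form of the $S$-action on $X\times T$ associated with $e$.
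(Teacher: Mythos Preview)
Your proposal is correct and follows the same overall strategy as the paper: verify the hypotheses of Theorem~\ref{main} and apply it. The only substantive difference is in the amenability step. You quotient the bundle by the free $S$-action to get a $T_0$-bundle over $X'$ and then invoke the fact that every action of an amenable group is amenable; the paper instead first observes that the $S$-action on $Y\simeq X'\times S$ is amenable (via Proposition~\ref{product-stable} with $T_1=\{e\}$, $T_2=S$, $B_1=X'$, $B_2=S$) and then uses the fixed-point property of the amenable group $T_0$ on the convex compact set $L^0(\pi)^S$ of $S$-invariant sections. Both arguments are valid and essentially dual to one another; the paper's route sidesteps the need to check that $V/S$ is a standard Borel bundle, while yours makes the role of the free $S$-action more explicit. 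Your final descent from $\phi$ on $Y$ to $\phi_0$ on $X$ is a detail the paper omits entirely (it simply says the result follows from Theorem~\ref{main}); your computation is right up to the sign conventions you already flagged.
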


\begin{proof} 
By setting $T_1=\{e\}$, $T_2=S$, $B_1=X'$ and $B_2=S$ in Proposition~\ref{product-stable}
we get that the $S$ action on $Y$ is amenable.
Given any measurable $S\times T_0$-bundle of convex compact space over $Y$, $\pi:C\to Y$, we consider the space $L^0(\pi)$
consisting of classes of measurable sections defined up to null sets.
By the amenability of the $S$ action on $Y$ we conclude that the space of $S$-invariants, $L^0(\pi)^S$,
is not empty.
This space has a natural convex compact structure and it is preserved by the $T_0$-action.
By the amenability of $T_0$ there is a fixed point in that space, namely an $S\times T_0$ invariant section for $\pi$.
We conclude that the action of $S\times T_0$ on $Y$ is amenable.
The corollary now follows from Theorem~\ref{main}.
\end{proof}

\begin{proof}[Proof of Theorem~\ref{prezimmer}]
This is the special case of Corollary~\ref{firstcor} where we choose $S=T$ and $Y=X\times T$
endowed with the $T\times T$ action where $(t_1,t_2)$ acts by $(x,t)\mapsto (t_1x,t_1tt_2^{-1})$.
\end{proof}




\begin{bibdiv}
\begin{biblist}

\bib{BDL}{article}{
AUTHOR = {Uri Bader}, Author={Bruno Duchesne}, Author={Jean Lecureux},
TITLE = {Algebraic representation of amenable actions},
JOURNAL = {In preperation},
 YEAR = {2013},
}

\bib{BF-metric}{article}{
AUTHOR = {Bader, Uri}, Author={Furman, Alex},
TITLE = {Boundary pairs and their algebraic representations},
JOURNAL = {In preperation},
 YEAR = {2013},
}

\bib{BG}{article}{
AUTHOR = {Bader, Uri}, Author={Gelander, Tsachik},
TITLE = {Around Howe-Moore theorem},
JOURNAL = {In preperation},
 YEAR = {2013},
}

\bib{BM}{article}{
    AUTHOR = {Burger, M.}, Author={Monod, N.},
     TITLE = {Continuous bounded cohomology and applications to rigidity
              theory},
   JOURNAL = {Geom. Funct. Anal.},
  FJOURNAL = {Geometric and Functional Analysis},
    VOLUME = {12},
      YEAR = {2002},
    NUMBER = {2},
     PAGES = {219--280},
      ISSN = {1016-443X},
     CODEN = {GFANFB},
   MRCLASS = {53C24 (22E41 46H25 46M20)},
  MRNUMBER = {1911660 (2003d:53065a)},
MRREVIEWER = {David Michael Fisher},
       DOI = {10.1007/s00039-002-8245-9},
       URL = {http://dx.doi.org/10.1007/s00039-002-8245-9},
}

\bib{b-z}{article}{
    AUTHOR = {Bern{\v{s}}te{\u\i}n, I. N.},
AUTHOR={Zelevinski{\u\i}, A. V.},
     TITLE = {Representations of the group {$GL(n,F),$} where {$F$} is a
              local non-{A}rchimedean field},
   JOURNAL = {Uspehi Mat. Nauk},
  FJOURNAL = {Akademiya Nauk SSSR i Moskovskoe Matematicheskoe Obshchestvo.
              Uspekhi Matematicheskikh Nauk},
    VOLUME = {31},
      YEAR = {1976},
    NUMBER = {3(189)},
     PAGES = {5--70},
      ISSN = {0042-1316},
   MRCLASS = {22E50},
  MRNUMBER = {0425030 (54 \#12988)},
MRREVIEWER = {G. I. Olsanskii},
}

\bib{borel}{book}{
    AUTHOR = {Borel, Armand},
     TITLE = {Linear algebraic groups},
    SERIES = {Graduate Texts in Mathematics},
    VOLUME = {126},
   EDITION = {Second},
 PUBLISHER = {Springer-Verlag},
   ADDRESS = {New York},
      YEAR = {1991},
     PAGES = {xii+288},
      ISBN = {0-387-97370-2},
   MRCLASS = {20-01 (20Gxx)},
  MRNUMBER = {1102012 (92d:20001)},
MRREVIEWER = {F. D. Veldkamp},
       DOI = {10.1007/978-1-4612-0941-6},
       URL = {http://dx.doi.org/10.1007/978-1-4612-0941-6},
}

\bib{effros}{article}{
    AUTHOR = {Effros, Edward G.},
     TITLE = {Transformation groups and {$C^{\ast} $}-algebras},
   JOURNAL = {Ann. of Math. (2)},
  FJOURNAL = {Annals of Mathematics. Second Series},
    VOLUME = {81},
      YEAR = {1965},
     PAGES = {38--55},
      ISSN = {0003-486X},
   MRCLASS = {46.65},
  MRNUMBER = {0174987 (30 \#5175)},
MRREVIEWER = {J. M. G. Fell},
}

\bib{valued}{book}{
    AUTHOR = {Engler, Antonio J.}, Author={Prestel, Alexander},
     TITLE = {Valued fields},
    SERIES = {Springer Monographs in Mathematics},
 PUBLISHER = {Springer-Verlag},
   ADDRESS = {Berlin},
      YEAR = {2005},
     PAGES = {x+205},
      ISBN = {978-3-540-24221-5; 3-540-24221-X},
   MRCLASS = {12J20 (12F05 12J10 12J12 12J15)},
  MRNUMBER = {2183496 (2007a:12005)},
MRREVIEWER = {Niels Schwartz},
}

\bib{FM}{article}{
    AUTHOR = {Furman, Alex}, Author={Monod, Nicolas},
     TITLE = {Product groups acting on manifolds},
   JOURNAL = {Duke Math. J.},
  FJOURNAL = {Duke Mathematical Journal},
    VOLUME = {148},
      YEAR = {2009},
    NUMBER = {1},
     PAGES = {1--39},
      ISSN = {0012-7094},
     CODEN = {DUMJAO},
   MRCLASS = {37C85 (22D40 22F10 37A15 53C24 57S30)},
  MRNUMBER = {2515098 (2010a:37044)},
MRREVIEWER = {David Michael Fisher},
       DOI = {10.1215/00127094-2009-018},
       URL = {http://dx.doi.org/10.1215/00127094-2009-018},
}

\bib{GKM}{article}{
    AUTHOR = {Gelander, Tsachik}, Author={Karlsson, Anders}, Author={Margulis, Gregory A.},
     TITLE = {Superrigidity, generalized harmonic maps and uniformly convex
              spaces},
   JOURNAL = {Geom. Funct. Anal.},
  FJOURNAL = {Geometric and Functional Analysis},
    VOLUME = {17},
      YEAR = {2008},
    NUMBER = {5},
     PAGES = {1524--1550},
      ISSN = {1016-443X},
     CODEN = {GFANFB},
   MRCLASS = {53C24 (22D12 22E40 58E20)},
  MRNUMBER = {2377496 (2009a:53074)},
MRREVIEWER = {Raul Quiroga-Barranco},
       DOI = {10.1007/s00039-007-0639-2},
       URL = {http://dx.doi.org/10.1007/s00039-007-0639-2},
}

\bib{GW}{article}{
    AUTHOR = {Glasner, E.}, Author={Weiss, B.},
     TITLE = {Weak mixing properties for nonsingular actions},
 JOURNAL = {preprint},
}

\bib{kaimanovich}{article}{
    AUTHOR = {Kaimanovich, V. A.},
     TITLE = {Double ergodicity of the {P}oisson boundary and applications
              to bounded cohomology},
   JOURNAL = {Geom. Funct. Anal.},
  FJOURNAL = {Geometric and Functional Analysis},
    VOLUME = {13},
      YEAR = {2003},
    NUMBER = {4},
     PAGES = {852--861},
      ISSN = {1016-443X},
     CODEN = {GFANFB},
   MRCLASS = {60G50 (28C10 28D15 37A20)},
  MRNUMBER = {2006560 (2004k:60128)},
MRREVIEWER = {Gernot Greschonig},
       DOI = {10.1007/s00039-003-0433-8},
       URL = {http://dx.doi.org/10.1007/s00039-003-0433-8},
}

\bib{kechris}{book}{
    AUTHOR = {Kechris, Alexander S.},
     TITLE = {Classical descriptive set theory},
    SERIES = {Graduate Texts in Mathematics},
    VOLUME = {156},
 PUBLISHER = {Springer-Verlag},
   ADDRESS = {New York},
      YEAR = {1995},
     PAGES = {xviii+402},
      ISBN = {0-387-94374-9},
   MRCLASS = {03E15 (03-01 03-02 04A15 28A05 54H05 90D44)},
  MRNUMBER = {1321597 (96e:03057)},
MRREVIEWER = {Jakub Jasi{\'n}ski},
       DOI = {10.1007/978-1-4612-4190-4},
       URL = {http://dx.doi.org/10.1007/978-1-4612-4190-4},
}

\bib{margulis-book}{book}{
    AUTHOR = {Margulis, G. A.},
     TITLE = {Discrete subgroups of semisimple {L}ie groups},
    SERIES = {Ergebnisse der Mathematik und ihrer Grenzgebiete (3) [Results
              in Mathematics and Related Areas (3)]},
    VOLUME = {17},
 PUBLISHER = {Springer-Verlag},
   ADDRESS = {Berlin},
      YEAR = {1991},
     PAGES = {x+388},
      ISBN = {3-540-12179-X},
   MRCLASS = {22E40 (20Hxx 22-02 22D40)},
  MRNUMBER = {1090825 (92h:22021)},
MRREVIEWER = {Gopal Prasad},
}

\bib{Monod-products}{article}{
   AUTHOR = {Monod, Nicolas},
     TITLE = {Superrigidity for irreducible lattices and geometric
              splitting},
   JOURNAL = {J. Amer. Math. Soc.},
  FJOURNAL = {Journal of the American Mathematical Society},
    VOLUME = {19},
      YEAR = {2006},
    NUMBER = {4},
     PAGES = {781--814},
      ISSN = {0894-0347},
   MRCLASS = {22F05 (20F65 22E40 53C24)},
  MRNUMBER = {2219304 (2007b:22025)},
MRREVIEWER = {David Michael Fisher},
       DOI = {10.1090/S0894-0347-06-00525-X},
       URL = {http://dx.doi.org/10.1090/S0894-0347-06-00525-X},
}

\bib{MS-SR}{article}{
    AUTHOR = {Monod, Nicolas}, Author={Shalom, Yehuda},
     TITLE = {Cocycle superrigidity and bounded cohomology for negatively
              curved spaces},
   JOURNAL = {J. Differential Geom.},
  FJOURNAL = {Journal of Differential Geometry},
    VOLUME = {67},
      YEAR = {2004},
    NUMBER = {3},
     PAGES = {395--455},
      ISSN = {0022-040X},
     CODEN = {JDGEAS},
   MRCLASS = {53C24 (22F10)},
  MRNUMBER = {2153026 (2006g:53051)},
MRREVIEWER = {Christopher Connell},
       URL = {http://projecteuclid.org/getRecord?id=euclid.jdg/1102091355},
}

\bib{serre}{book}{
    AUTHOR = {Serre, Jean-Pierre},
     TITLE = {Lie algebras and {L}ie groups},
    SERIES = {Lecture Notes in Mathematics},
    VOLUME = {1500},
      NOTE = {1964 lectures given at Harvard University,
              Corrected fifth printing of the second (1992) edition},
 PUBLISHER = {Springer-Verlag},
   ADDRESS = {Berlin},
      YEAR = {2006},
     PAGES = {viii+168},
      ISBN = {978-3-540-55008-2; 3-540-55008-9},
   MRCLASS = {17-01 (22-01)},
  MRNUMBER = {2179691 (2006e:17001)},
}

\bib{zimmer-book}{book}{
   author={Zimmer, R. J.},
   title={Ergodic theory and semisimple groups},
   series={Monographs in Mathematics},
   volume={81},
   publisher={Birkh\"auser Verlag},
   place={Basel},
   date={1984},
   pages={x+209},
   isbn={3-7643-3184-4},
   review={\MR{776417 (86j:22014)}},
}

\end{biblist}
\end{bibdiv}

\end{document}